\documentclass[a4paper,11pt, oneside]{amsart}
\usepackage[english]{babel}
\usepackage{amsthm}
\usepackage{amssymb}
\usepackage{amsfonts}
\usepackage{amsmath}
\usepackage{amsthm}
\usepackage[all]{xy}
\usepackage{geometry}
\usepackage{ae,aecompl}
\usepackage{eurosym}
\usepackage{verbatim}
\usepackage{mathrsfs}
\usepackage{caption}
\usepackage{url}
\usepackage{tikz}
\usepackage{pifont}
\usepackage{caption}
\usepackage{hyperref}
\usepackage{calligra}
\usepackage{array}

\newcounter{stepcounter}
\newtheoremstyle{smallcaps}
    {3pt}                    
    {3pt}                    
    {\itshape}                   
    {}                           
    {\sc}                   
    {.}                          
    {.5em}                       
    {}  
    
\newtheoremstyle{smallcapsdef}
    {3pt}                    
    {3pt}                    
    {}                   
    {}                           
    {\sc}                   
    {.}                          
    {.5em}                       
    {}  

\theoremstyle{plain}

\newtheorem{thm}{Theorem}[section]

\newtheorem{lem}[thm]{Lemma}
\newtheorem{prop}[thm]{Proposition}
\newtheorem{cor}[thm]{Corollary}
\newtheorem{conj}[thm]{Conjecture}

\theoremstyle{definition}

\newtheorem{eg}[thm]{Example}
\newtheorem{defn}[thm]{Definition}
\newtheorem{rem}[thm]{Remark}
\newtheorem{remark}[thm]{Remark}


\parindent=0pt \parskip=3pt
%
%





\date{}


\newcommand\bit{\begin{itemize}}
\newcommand\eit{\end{itemize}}
\newcommand\bet{\begin{enumerate}}
\newcommand\eet{\end{enumerate}}
\newcommand\ed{\end{document}}

\DeclareFontFamily{U}{mathx}{\hyphenchar\font45}
\DeclareFontShape{U}{mathx}{m}{n}{
      <5> <6> <7> <8> <9> <10>
      <10.95> <12> <14.4> <17.28> <20.74> <24.88>
      mathx10
      }{}
\DeclareSymbolFont{mathx}{U}{mathx}{m}{n}
\DeclareFontSubstitution{U}{mathx}{m}{n}
\DeclareMathAccent{\widecheck}{0}{mathx}{"71}
\DeclareMathAccent{\wideparen}{0}{mathx}{"75}





\newcommand{\e}{\varepsilon}
\newcommand{\f}{\varphi}

\newcommand\w{\omega}

\newcommand\Om{\Omega}

\newcommand\del{\partial}
\newcommand\adel{\ol{\partial}}

\newcommand\G{\Gamma}




\newcommand\F{{\mathcal F}}

\newcommand\MM{{\mathcal M}}
\newcommand\NN{{\mathcal N}}
\renewcommand{\O}{\mathcal{O}}




\newcommand\exd{\mathrm{d}}

\newcommand\unit{\mathrm{U}}
\newcommand\counit{\mathrm{C}}

\newcommand\id{\mathrm{id}}



\def\qbinom#1#2{\ensuremath{\left[\kern-.3em\left[\genfrac{}{}{0pt}{}{#1}{#2}\right]\kern-.3em\right]_q}}

\newcommand\ol{\overline}

\newcommand\alg{algebra~}











%

%


\def\clap#1{\hbox to 0pt{\hss#1\hss}}
\def\mathllap{\mathpalette\mathllapinternal}

\def\mathllapinternal#1#2{%
  \llap{$\mathsurround=0pt#1{#2}$}}

\newsavebox\qModFirst
\newsavebox\qModSecond
\newcommand{\Mod}{\mathrm{Mod}}
\newcommand{\modz}[2]{
  \sbox{\qModFirst}{$#1$}
  \sbox{\qModSecond}{$#2$}
  \ifdim\wd\qModFirst>\wd\qModSecond
    {}^{\phantom{#1}\mathllap{#1}}_{\phantom{#1}\mathllap{#2}}%
  \else
    {}^{\phantom{#2}\mathllap{#1}}_{\phantom{#2}\mathllap{#2}}%
  \fi\mathrm{mod}_0}
\newcommand{\qMod}[4]{{%
    \sbox{\qModFirst}{$#1$}
    \sbox{\qModSecond}{$#2$}
    \ifdim\wd\qModFirst>\wd\qModSecond
      {}^{\phantom{#1}\mathllap{#1}}_{\phantom{#1}\mathllap{#2}}%
    \else
      {}^{\phantom{#2}\mathllap{#1}}_{\phantom{#2}\mathllap{#2}}%
    \fi
    \Mod^{#3}_{#4}}}

\newcommand{\qmod}[4]{{%
      \sbox{\qModFirst}{$#1$}
      \sbox{\qModSecond}{$#2$}
      \ifdim\wd\qModFirst>\wd\qModSecond
        {}^{\phantom{#1}\mathllap{#1}}_{\phantom{#1}\mathllap{#2}}%
      \else
        {}^{\phantom{#2}\mathllap{#1}}_{\phantom{#2}\mathllap{#2}}%
      \fi
  \mathrm{mod}^{#3}_{#4}}}

\newcommand\EE{{\mathcal E}}
\newcommand\FF{{\mathcal F}}

\newcommand{\OO}{\mathcal{O}}

\usepackage{tikz}
\usetikzlibrary{decorations.pathreplacing}

\usepackage{ytableau}

\usepackage[english]{babel}
 \usepackage{mathtools}
 \usetikzlibrary{shapes.geometric}
 

\title[A Noncommutative Complex Structure for $\OO_q(\mathrm{F}_3)$]{Noncommutative Complex Structures for the Full Quantum Flag Manifold of $\OO_q(\mathrm{SU}_3)$} 

\thanks{R\'OB is supported by the GA\v{C}R/NCN grant \emph{Quantum Geometric Representation Theory and Noncommutative Fibrations} 24-11728K. All three authors acknowledge support from COST Action 21109 CaLISTA, supported by COST (European Cooperation in Science and Technology) and HORIZON-MSCA-2022-SE-01-01 CaLIGOLA. AC acknowledges support from  MSCA-DN CaLiForNIA - 101119552. AC and JR are grateful for the hospitality offered by Charles University and by the Institute of Mathematics of the Czech Academy of Sciences.}

\author[A. Carotenuto]{Alessandro Carotenuto}
\address{FaBiT, via San Donato 15, 41127 Bologna, Italy} 
\email{alessandr.carotenut2@unibo.it, acaroten91@gmail.com}

\author[R. \'O Buachalla]{R\'eamonn \'O Buachalla}
\address{Mathematical Institute of Charles University, Sokolovsk\'a 83, Prague, Czech Republic} 
\email{reamonnobuachalla@gmail.com}

\author[J. Razzaq]{Junaid Razzaq}
\address{Department of Mathematics, Piazza di Porta S. Donato, 5, 40126 Bologna, Italy} 
\email{junaid.razzaq2@unibo.it}

\begin{document}

\maketitle


\begin{abstract}
In recent work, Lusztig's positive root vectors (with respect to a distinguished  choice of reduced decomposition of the longest element of the Weyl group) were shown to give a quantum tangent space for every  $A$-series Drinfeld--Jimbo full quantum flag manifold $\OO_q(\mathrm{F}_n)$. Moreover, the associated differential calculus $\Omega^{(0,\bullet)}_q(\mathrm{F}_n)$ was shown to have classical dimension, giving a direct $q$-deformation of the classical anti-holomorphic Dolbeault complex of $\mathrm{F}_n$. Here we examine in detail the rank two case, namely the full quantum flag manifold of $\OO_q(\mathrm{SU}_3)$. In particular, we examine the $*$-differential calculus associated to $\Omega^{(0,\bullet)}_q(\mathrm{F}_3)$ and its non-commutative complex geometry. We find that the number of almost-complex structures reduces from $8$ (that is $2$ to the power of the number of positive roots of $\frak{sl}_3$) to $4$ (that is $2$ to the power of the number of simple roots  of $\frak{sl}_3$). Moreover, we show that each of these almost-complex structures is integrable, which is to say, each of them is a complex structure. Finally, we observe that, due to non-centrality of all the non-degenerate coinvariant $2$-forms, none of these complex structures admits a left $\OO_q(\mathrm{SU}_3)$-covariant noncommutative K\"ahler structure.
\end{abstract}



\section{Introduction}

Constructing a theory of noncommutative geometry for Drinfeld--Jimbo quantum groups is a very important but very challenging problem. Despite numerous significant contributions over the past three decades, this field remains largely under development. Throughout the literature, the essential example has been the celebrated Podle\'s sphere $\OO_q(S^2)$, which serves as a fundamental test for evaluating new ideas. While many important questions remain, the Podle\'s sphere stands out for its relatively well understood noncommutative geometry. This is in sharp contrast to the quantum group $\OO_q(\mathrm{SU}_2)$, where the obstruction posed by the non-existence of a bicovariant differential calculus of classical dimension remains unresolved.

The Podle\'s sphere is the simplest example of a quantum flag manifold. For the last two decades this class of quantum homogeneous spaces has been the focus of intense study, as the noncommutative geometry community has tried to extend its understanding of the Podle\'s sphere to this general class of examples. In particular, attention has focused on those quantum flag manifolds of irreducible type, a special, more tractable, subfamily of the general quantum flag manifolds. This has seen many successes, the most notable being Heckenberger and Kolb's proof that the irreducible quantum flag manifolds admit an essentially unique $q$-deformed covariant de Rham complex. This result directly generalises Podle\'s' construction and classification of differential calculi for $\OO_q(S^2)$. It has been shown that the noncommutative complex and K\"ahler geometry of the Podle\'s sphere \cite{Maj} extends to the irreducible quantum flag setting \cite{MarcoConj,ROBKahler}, as does the Bott--Borel--Weil theorem for $\OO_q(S^2)$ \cite{CDOBBW,BwGrass,KLvSPodles,KKCP2,KKCPN,Maj}. Following Connes' $C^*$-algebraic approach to noncommutative geometry, spectral triples have been constructed for many irreducible quantum flag manifolds \cite{SISSACPn,SISSACP2,DOS1,FredyQuad,MatassaParth,DOW}, extending the construction of D\c{a}browski--Sitarz for the Podle\'s sphere \cite{DSPodles}. Recently, the family of quantum projective spaces have been endowed with the structure of a compact quantum metric space \cite{MK24}, extending the Podle\'s sphere construction  given in \cite{AKK23}.


While many interesting and challenging problems remain in the irreducible setting, the time has now come to examine the non-irreducible situation. The first steps in this direction have already been taken. For example, there is the work of Yuncken and Voigt on the noncommutative geometry of the full quantum flag manifold of $\OO_q(\mathrm{SU}_3)$, which uses a quantum BGG sequence to verify the Baum--Connes conjecture for $U_q(\frak{sl}_3)$ \cite{VoigtYuncken}. More recently, Somberg and the second author constructed an anti-holomorhic Dolbeault complex 
for the $A$-series full quantum flag manifolds using Lusztig's root vectors and extended the Borel--Weil theorem to this setting \cite{ROBPSLusz}. Subsequently, in \cite{Matassa.Equiv.Nil} Matassa introduced an alternative construction of first-order differential calculi for all quantum flag manifolds.


To a certain extent, the full quantum flag manifolds look closer to the Podle\'s sphere than the other irreducible quantum flag manifolds do. For example, their relative Hopf modules are all direct sums of line bundles. However, their differential calculi $\Omega^{(0,\bullet)}_q(\mathrm{F}_n)$ have more noncommutative behaviour than the Heckenberger--Kolb calculi. In particular, their bimodule structure is more involved. This means that one cannot use the monoidal version of Takeuchi's categorical equivalence, a fact that has many important consequences. In the present paper we restrict to the simplest example of a full quantum flag manifold after the Podle\'s sphere, namely $\OO_q(\mathrm{F}_3)$ the full quantum flag manifold of $\OO_q(\mathrm{SU}_3)$, and examine the $*$-differential calculus $\Omega^{\bullet}_q(\mathrm{F}_3)$ associated to $\Omega^{(0,\bullet)}_q(\mathrm{F}_3)$. This offers an accessible and tractable example, making it an excellent starting point for future research in the non-irreducible setting. Just as Podleś' work advanced our understanding of the irreducible setting, $\OO_q(\mathrm{F}_3)$ has the potential to do the same for the non-irreducible case. Indeed, recent  work of Brzezinski and Szymanski \cite{BrzezinskiSzymanski2021} described $\OO_q(\mathrm{F}_3)$ as the total space of a \emph{quantum fibration} over the quantum projective plane, with a Podle\'s sphere fibre.  The authors put this non-principal quantum fibration forward as a motivating example for a proposed theory of noncommutative fibrations with quantum homogeneous fibres. The first steps in this direction were recently taken in \cite{GAPP}, and a large family of new examples were produced.

The first major result of the paper is to show that the maximal prolongation of $\Omega^{1}_q(\mathrm{F}_3)$ has classical dimension. Moreover, we show that it is a Koszul and a Frobenius algebra, and we calculate the Nakayama automorphism $\sigma$. Notably, unlike for the anti-holomorphic sub-calculus, $\sigma$ is not of classical type. 
We next show that, just as in the classical case, there exist covariant connections for the $1$-forms that are not torsion free. This contrasts with the Heckenberger--Kolb case, where we have a unique covariant connection, and moreover, this connection is torsion free. We next classify the left $\OO_q(\mathrm{SU}_3)$-covariant almost-complex structures on $\Omega^{\bullet}_q(\mathrm{F}_3)$. The number of almost-complex structures reduces from $2^{|\Delta^+|}$ (where $\Delta^+$ is a choice of positive roots for $\frak{sl}_3$) to $2^{|\Pi|}$  (where $\Pi$ is the set of associated simple roots). This is because certain almost-complex classical decompositions fail to be bimodule decompositions in the quantum setting, due to the involved bimodule structure of the differential calculus. Thus we see that the classical Weyl group symmetry of the almost-complex structures on $\mathrm{F}_3$ breaks in the quantum setting. Moreover, an almost-complex structure admits a $q$-deformation only if it is integrable. When it does, integrability carries over to the quantum setting, meaning that we do not have any non-integrable noncommutative almost-complex structures. We contrast this with the irreducible quantum flag manifolds that have a unique complex structure, up to identification of opposite complex structures. It is conjectured that this situation generalises to all $A$-series full quantum flag manifolds.


An interesting observation is that the classical nearly K\"ahler structure of $\mathrm{F}_3$ is associated to one of the non-integrable almost complex structures, meaning that we do not have a quantum nearly K\"ahler structure. Another very interesting feature is that the natural quantum analogue of the standard K\"ahler form and in fact all non-degenerate left $\OO_q(\mathrm{SU}_3)$-coinvariant forms, are no longer central. This implies that the calculus does not admit a covariant non-commutative K\"ahler structure, nor a covariant metric in the sense of Beggs and Majid.


\subsection*{Summary of the Paper}

The paper is organised as follows: In \textsection 2 we recall some necessary preliminaries about differential calculi, noncommutative complex structutres, and Drinfeld--Jimbo quantum groups.

In \textsection 3  we present the differential calculus $\Omega^1_q(\mathrm{F}_3)$ as the base of a homogeneous quantum principal bundle and observe that the zero map gives a principal connection. We then calculate the degree two relations of the maximal prolongation. Moreover, we present the associated quantum exterior algebra as a Frobenius algebra and calculate its Nakayama automorphism.

In \textsection 4 we discuss torsion for connections for covariant calculi over quantum homogeneous spaces. We show that, under the assumption that the quantum isotropy subgroup is cosemisimple, a covariant torsion free connection always exists. We also calculate the dimension of the affine space of covariant connections, and torsion free covariant connections,  for $\Omega^1_q(\mathrm{F}_3)$.

In \textsection 5 we classify the left $\OO_q(\mathrm{SU}_3)$-covariant almost-complex structures of $\Omega^1_q(\mathrm{F}_3)$. We show that of the 8 classical almost-complex structures, only four pass to the quantum setting and that all of these are integrable. Finally, we examine how the standard classical K\"ahler form behaves in the quantum setting. We see that there is a three dimensional space of left $\OO_q(\mathrm{SU}_3)$-covariant $2$-forms, and that none of these forms is both non-degenerate and central.

\subsubsection*{Acknowledgements:} We would like to thank Edwin Beggs, Arnab Bhattacharjee, Andrey Krutov, and Ben McKay for many useful discussions.


\section{Preliminaries}

In this section we recall some basic material about covariant differential calculi over Hopf algebras and their associated tangent spaces. We use Sweedler notation, denote by $\Delta$, $\e$ and $S$ the coproduct, counit and antipode of a Hopf algebra respectively. We write $A^{\circ}$ for the dual coalgebra (Hopf algebra) of a (Hopf) algebra $A$, and denote the pairing between $A$ and $A^{\circ}$ by angular brackets. Throughout the paper, all algebras are over $\mathbb{C}$ and assumed to be unital, all unadorned tensor products are over $\mathbb{C}$, and all Hopf algebras are assumed to have bijective antipodes.

\subsection{Quantum Homogeneous Spaces}

We begin by briefly recalling Takeuchi's equivalence for relative Hopf modules, see \cite[Appendix A]{GAPP} for more details. For $A$ a Hopf algebra, we say that a left coideal subalgebra $B \subseteq A$ is a \emph{quantum homogeneous \mbox{$A$-space}} if $A$ is faithfully flat as a right $B$-module and  $B^+A = AB^+$, where $B^+ := \ker(\e|_B)$. We denote by ${}^{A}_B\mathrm{Mod}_B$ the category of two-sided relative Hopf modules, and by ${}^{\pi_B}\mathrm{Mod}_B$ the category of left comodules over the Hopf algebra $\pi_B(A) := A/B^+A$, which we call the \emph{quantum isotropy subgroup}. An equivalence of categories, known as Takeuchi's equivalence, is given by the functor $\Phi:{}^{A}_B\mathrm{Mod}_B \to {}^{\pi_B}\mathrm{Mod}_B$, where $\Phi(\F) = \F/B^+\F$, for any relative Hopf module $\F$, and the functor $\Psi:{}^{\pi_B}\mathrm{Mod}_B \to {}^{A}_B\mathrm{Mod}_B$ is defined using the cotensor product  $\square_{\pi_B}$ over $\pi_B(A)$. A unit for the equivalence is given by $\unit: \F \to (\Psi \circ \Phi)(\F)$, where $\unit(f) =  f_{(1)} \otimes [f_{(0)}]$, and $[f_{(0)}]$ denotes the coset of $f_{(0)}$ in $\Phi(\F)$. For the special case where $B = A$, Takeuchi's equivalence is known as the fundamental theorem of Hopf modules \cite{Monty}.

\subsection{Covariant Differential Calculi over Hopf algebras}

A {\em differential calculus}, or a \emph{dc}, is a differential graded algebra (dga) 
$$
\left(\Om^\bullet \cong \bigoplus_{k \in \mathbb{Z}_{\geq 0}} \Om^k, \exd\right)
$$  
that is generated as an algebra by the elements $a, \exd b$, for $a,b \in \Om^0$. When no confusion arises, we denote the dc by $\Omega^{\bullet}$, omitting the \emph{exterior derivative} $\exd$. We denote the degree of a homogeneous element $\w \in \Om^{\bullet}$ by $|\w|$. For a given algebra $B$, a differential calculus {\em over} $B$ is a differential calculus such that $B = \Om^0$. We say that $\omega \in \Omega^{\bullet}$ is \emph{closed} if $\exd \omega = 0$. If $B'$ is a subalgebra of $B$ then the \emph{restriction} of $\Omega^{\bullet}$ to $B'$ is the dc generated by the elements $\exd b'$, for $b' \in B'$.

A {\em first-order differential calculus}, or a \emph{fodc}, over an algebra $B$ is a pair $(\Om^1,\exd)$, where $\Omega^1$ is a $B$-bimodule and $\exd: B \to \Omega^1$ is a derivation such that $\Om^1$ is generated as a left  (or equivalently right) $B$-module by those elements of the form~$\exd b$, for~$b \in B$. We say that a dc $(\G^\bullet,\exd_\G)$ {\em extends} a fodc $(\Om^1,\exd_{\Om})$ if there exists a bimodule isomorphism $\f:\Om^1 \to \G^1$ such that $\exd_\G = \f \circ \exd_{\Om}$. It can be shown  \cite[\textsection 2.5]{MMF2} that any fodc admits an extension $\Om^\bullet$ which is \emph{maximal} in the sense that there exists a unique differential map from $\Om^\bullet$ onto any other extension of $\Om^1$. We call this extension the {\em maximal prolongation} of $\Om^1$.

For $A$ a Hopf algebra, and $B \subseteq A$ a quantum homogeneous space, a dc $\Omega^\bullet$ over $B$ is said to be \emph{left covariant} if there exists a  coaction $\Delta_L : \Omega^\bullet \to A \otimes \Omega^\bullet$, giving $\Omega^\bullet$ the structure of a left $A$-comodule algebra and with respect to which~$\exd$ is a left $A$-comodule map. (Note that this uniquely defines the coaction $\Delta_L$.) We see that $\Omega^{\bullet}$ is naturally an object in ${}^A_B\mathrm{Mod}_B$. For any covariant dc $\Omega^{\bullet}$ we usually find it notationally convenient to denote $V^{\bullet} := \Phi(\Omega^{\bullet})$.

\subsection{Some Remarks on Quantum Homogeneous Tangent Spaces} \label{subsection:remarksQHTS} 

Let $A$ be a Hopf algebra, and $W \subseteq A^{\circ}$ a Hopf subalgebra of $A^{\circ}$, such that 
$$
B := \, {}^{W}\!A = \Big\{b \in B \,|\, w \triangleright b := b_{(1)} \langle b_{(2)}, w\rangle = \e(w) b\Big\}
$$
is a quantum homogeneous $A$-space, and denote by $B^{\circ}$ its dual coalgebra. A \emph{tangent space} for $B$ is a finite-dimensional subspace $T \subseteq B^{\circ}$ such that $T \oplus \mathbb{C}1$ is a right coideal of $B^{\circ}$ and $WT = T$. For any tangent space $T$, a right $B$-ideal of $B^{+}$ is given by 
\begin{align*}
I := \big\{ y \in B^+ \,|\, X(y) = 0, \textrm{ for all } X \in T \big\},
\end{align*}
meaning that the quotient $V^1 := B^+/I$ is naturally an object in the category ${}^{\pi_B}\mathrm{Mod}_B$. We call $V^1$ the \emph{cotangent space} of $T$. Consider now the object 
\begin{align*}
\Omega^1(B) := A \square_{\pi_B} V^1.
\end{align*}
If $\{X_i\}_{i=1}^n$ is a basis for $T$, and $\{e_i\}_{i=1}^n$ is the dual basis of $V^1$, then the map 
\begin{align*}
\exd: B  \to \Omega^1(B), & & a \mapsto \sum_{i=1}^n (X^+_i \triangleright a) \otimes e_i
\end{align*}
is a derivation, and the pair $(\Omega^1(B),\exd)$ is a left $A$-covariant fodc over $B$. This gives a bijective correspondence between isomorphism classes of tangent spaces and finitely-generated left $A$-covariant fodc \cite{HKTangent}.

In order to give an explicit presentation of the maximal prolongation of a left $A$-covariant fodc $\Omega^1(B)$, we need to recall the notion of a framing calculus: A \emph{framing calculus} for $\Omega^1(B)$ is a left $A$-covariant fodc $\Omega^1(A) \cong A \otimes \Lambda^1$ over $A$ that restricts to $\Omega^1(B)$, such that $V^1$ embeds into $\Lambda^1$, and the image of $V^1$ in $\Lambda^1$ is a right $A$-submodule of $\Lambda^1$. 

Let $\Omega^1(A) \cong A \otimes \Lambda^1$ be a framing calculus for $\Omega^1(B)$, and  let $I \subseteq B^+$ be the ideal corresponding to $\Omega^1(B)$. Consider the subspace
\begin{align*}
I^{(2)} := \left\{\omega(y) := [y_{(1)}^+] \otimes [y_{(2)}^+]  ~ | ~ y \in I \right\} \subseteq V^1 \otimes V^1 \subseteq \Lambda^1 \otimes  \Lambda^1.
\end{align*}
Starting from the tensor algebra $\mathcal{T}(V^1)$ of $V^1$, we construct the $\mathbb{Z}_{\geq 0}$-graded algebra
\begin{align*}
V^{\bullet} :=  \bigoplus_{k \in \mathbb{Z}_{\geq 0}} V^k := \mathcal{T}\big(V^1\big)/\langle I^{(2)} \rangle,
\end{align*}
which we call the \emph{quantum exterior algebra} of $\Omega^1(B)$, and whose multiplication we denote by $\wedge$, motivated by the classical situation. The formula
\begin{align*}
[\exd b_1 \wedge \cdots \wedge \exd b_k] ~ \mapsto  ~ \Big[(\exd b_1)(b_2)_{(1)} \cdots (b_k)_{(1)}\Big]\wedge \bigg(\bigwedge_{i=2}^k \,  \Big[\exd((b_i)_{(i)})(b_{(i+1)})_{(i)} \cdots (b_{(k)})_{(i)}\Big]\bigg)\!,
\end{align*}
determines an isomorphism between $\Phi(\Omega^k(B))$ and $V^{k}$, for $k \in \mathbb{Z}_{\geq 0}$. See Appendix \ref{app:MPFC} for a more detailed discussion of this material.

\subsection{First-Order Differential $*$-Calculi}

A \emph{$*$-differential calculus}, or a \emph{$*$-dc}, over a $*$-algebra $B$ is a differential calculus over $B$ such that the $*$-map of $B$ extends to a (necessarily unique) conjugate-linear involution $*: \Omega^{\bullet}(B) \to \Omega^{\bullet}(B)$ satisfying the identity $\exd(\omega^*) = (\exd \omega)^*$, and for which
\begin{align*}
(\omega \wedge \nu)^* = (-1)^{kl} \nu^* \wedge \omega^*, & & \textrm{ for all } \omega \in \Omega^k, \, \nu \in \Omega^l.
\end{align*}

If we now assume that $A$ is a Hopf $*$-algebra, and that $\Omega^1(A)$ is a left $A$-covariant fodc over $A$, with corresponding tangent space $T,$ then $\Omega^1(A)$ is a $*$-fodc iff $T^* = T$, \cite[Proposition 14.1.2]{KSLeabh} for details. Consider next the case where $T^* \neq T$. Since 
$$
\Delta(X^*) = X^*_{(1)} \otimes X^*_{(2)} \in (T^* \oplus \mathbb{C}) \otimes A^{\circ}, 
$$
$T^*$ is a tangent space, implying in turn that $T+T^*$ is a tangent space. We call $T+T^*$ the \emph{$*$-extension} of $T$.

\subsection{Preliminaries on the Quantum Groups $U_q(\frak{sl}_3)$ and $\OO_q(\mathrm{SU}_3)$}

In this subsection we recall the definition of the Drinfeld--Jimbo quantised enveloping algebra of the simple Lie algebra $\frak{sl}_3$, and the dual quantum coordinate algebra $\OO_q(\mathrm{SU}_3)$. For more details, we direct the reader to \cite{KSLeabh} where the general definitions of the Drinfeld--Jimbo quantised enveloping algebras and their dual quantum coordinate algebras are given. 

The algebra $U_q(\frak{sl}_3)$ is generated by the elements $E_1,E_2,F_1,F_2,K^{\pm 1}_1$, and $K^{\pm 1}_2$, subject to the relations
\begin{align}
K^{\pm 1}_iE_j = q^{\pm a_{ij}} E_jK^{\pm 1}_i, & & K^{\pm 1}_iF_j = q^{\mp a_{ij}} F_jK^{\pm 1}_i,\\
K_i^{\pm 1}K_j^{\pm 1} = K_j^{\pm 1}K_i^{\pm 1}, & & [E_i,F_j]=\delta_{ij}\frac{K_i-K_i^{-1}}{q-q^{-1}},
\end{align}
where $(a_{ij})$ is the Cartan matrix of $\frak{sl}_3$, and the two \emph{Serre relations}
\begin{align}
E_1^2E_2 - (q+q^{-1}) E_1E_2E_1 + E_1E_2^2 = 0, & & F_1^2F_2 - (q+q^{-1}) F_1F_2F_1 + F_1F_2^2 = 0.
\end{align}
A Hopf algebra structure on $U_q(\frak{sl}_3)$ is determined by the coproduct formulae
\begin{align*}
\Delta(E_i) = E_i \otimes K_i + 1 \otimes E_i, & & \Delta(F_i) = F_i \otimes 1 + K^{-1}_i \otimes F_i, & & \Delta(K_i^{\pm 1}) = K_i^{\pm 1} \otimes K_i^{\pm 1},
\end{align*}
and the counit formulae
\begin{align*}
\e(E_i) = \e(F_i) = 0, & & \e(K^{\pm 1}) = 1.
\end{align*}
Moreover, a Hopf $*$-algebra structure is given by 
\begin{align*}
E_i^* = K_iF_i, & & F_i^* = E_iK^{-1}_i, & & K_i^* = K_i.
\end{align*}

Let $u_{ij}$, for $i,j = 1,2,3$, be the elements of $U_q(\frak{sl}_3)^{\circ}$, the Hopf dual of $U_q(\frak{sl}_3)$, defined by the coproduct formula $\Delta(u_{ij}) = \sum_{a=1}^3 u_{ia} \otimes u_{aj}$, and the fact that their only non-zero pairings with the non-unital generators of $U_q(\frak{sl}_3)$ are given by 
\begin{align*}
\langle E_1, u_{21}\rangle = \langle E_2, u_{32}\rangle = \langle F_1, u_{12}\rangle = \langle F_2, u_{23}\rangle = 1, & &  \langle K^{\pm 1}_i, u_{jj} \rangle = q^{\pm(\delta_{i,j-1} - \delta_{ij})}. 
\end{align*}
We denote by $\OO_q(\mathrm{SU}_3)$ the subalgebra of $U_q(\frak{sl}_3)^{\circ}$ generated by the elements $u_{ij}$, for $i,j = 1,2,3$, and call it the \textit{quantum coordinate algebra} of $\mathrm{SU}_3$. It is clear that $\OO_q(\mathrm{SU}_3)$ is a sub-bialgebra of $U_q(\frak{sl}_3)^{\circ}$. In fact, it is a Hopf subalgebra of $U_q(\frak{sl}_3)^{\circ}$ whose antipode satisfies 
\begin{align*}
S(u_{ij}) = (-q)^{i-j}(u_{km}u_{ln} - q u_{kn}u_{lm}),
\end{align*}
where $\{k,l\} := \{1,2,3\}\backslash \{j\}$, and $\{m,n\} := \{1,2,3\}\backslash \{i\}$. Moreover, it is a Hopf $*$-algebra with respect to the $*$-map defined by $(u_{ij})^* = S(u_{ji})$, for all $i, = 1,2,3$. Note that a dual pairing of Hopf algebras between $\OO_1(\mathrm{SU}_3)$ and $U_q(\frak{sl}_3)$ is given by evaluation, and a left action of $U_q(\frak{sl}_3)$ on $\OO_q(\mathrm{SU}_3)$ is given by $X \triangleright a := a_{(1)} \langle a_{(2)}, X\rangle$, for $X \in U_q(\frak{sl}_3)$ and $a \in \OO_q(\mathrm{SU}_3)$.

\subsection{Preliminaries on $\OO_q(\mathrm{F}_3)$ the Full Quantum Flag Manifold of $\OO_q(\mathrm{SU}_3)$} \label{subsection:prelims.full.q.flag}

Consider now the commutative subalgebra of $ U_q(\mathfrak{sl}_3)$ generated by 
   $ \{K^{\pm 1} _i \mid i=1,2\}$ which we denote by $U_q(\mathfrak{h})$.
This is a Hopf subalgebra of $U_q(\mathfrak{sl}_3)$ and we define the \textit{full quantum flag manifold} $\OO_q(\mathrm{F}_3)$ to be the space of invariants of 
\begin{align*}
    \OO_q(\mathrm{F}_3):={}^{U_q(\frak{h})}\OO_q(\mathrm{SU}_3) := \Big\{b \in \OO_q(\mathrm{SU}_3) \,|\, X \triangleright b = \e(X)b, \textrm{ for all } X \in U_q(\frak{h})\Big\}.
\end{align*}
The full quantum flag manifold is a quantum homogeneous space, and a special example of the general class of quantum homogeneous spaces called the quantum flag manifolds, see \cite{GAPP,DijkStok,HKdR} for more details.

The decomposition of $\OO_q(\mathrm{SU}_3)$ into homogeneous components with respect to the action of $U_q(\frak{h})$ is equivalent to having a $\mathcal{P}^+ = \mathbb{Z}^2$ grading
\begin{align} \label{eqn:linebundledecomp}
   \OO_q(\mathrm{SU}_3)= \bigoplus_{\lambda \in \mathcal{P}^+} \mathcal{E}_{\lambda},  
\end{align}
where each $\mathcal{E}_k$ is an \textit{equivariant line bundle} over $\OO_q(\mathrm{F}_3),$ that is an invertible object in the category of relative Hopf bimodules over $\OO_q(\mathrm{F}_3)$. We see that 
$
\EE_0 = \OO_q(\mathrm{F}_3).
$
See \cite[\textsection 5]{GAPP} for further details. Moreover, for the generators of $\OO_q(\mathrm{SU}_3)$ we see that 
\begin{align} \label{eqn:grading.on.gens}
u_{i1} \in \EE_{-\varpi_1}, & & u_{i2} \in \EE_{\varpi_1 - \varpi_2}, & & u_{i3} \in \EE_{\varpi_2}, 
\end{align}
for all $i=1,2,3$, which completely determines the $\mathcal{P}^+$-grading. 

The subalgebra $\mathcal{O}_q(\mathbb{CP}^2)$ of $\OO_q(\mathrm{SU}_3)$ generated by the elements $z^{\alpha_1}_{ij}:= u_{i1}u_{j1}^*$, for $i,j = 1,2,3$, is called the \emph{quantum projective plane} \cite{SISSACP2, DijkStok, KKCP2, Meyer, MMF1}. An isomorphic copy of the quantum projective plane is generated by the elements $z^{\alpha_2}_{ij}:= u_{i3}u_{j3}^*$, for $i,j = 1,2,3$. Both algebras are contained in $\mathcal{O}_q(\mathrm{F}_3)$, and together they generate $\mathcal{O}_q(\mathrm{F}_3)$ as an algebra. Moreover, both subsalgebras are quantum homogeneous spaces, and in fact also examples of quantum flag manifolds, again see \cite[\textsection 5]{GAPP} for further details.

Let us next introduce some notation
\begin{align} \label{eqn:spherical.gens}
    z^{\alpha_1}_i := u_{i1}, & & \textrm{ for } i=1,2,3, & &  z^{\alpha_2}_i := u_{i3}, & & \textrm{ for } i=1,2,3.
\end{align}
The $*$-algebra $\OO_q(S^5)$ generated by the elements $z^{\alpha_1}_i$ is known as the \emph{quantum $5$-sphere}, or the \emph{Vaksmann--Soibelmann $5$-sphere}. An isomorphic $*$-algebra is generated by the elements $z^{\alpha_2}_i$. From \eqref{eqn:grading.on.gens} above, we immediately see that 
\begin{align*}
z_i^{\alpha_1} \in \EE_{-\varpi_1}, & & \overline{z}_i^{\alpha_1} := (z^{\alpha_1})^*  \in \EE_{\varpi_1} & & z_i^{\alpha_2} \in \EE_{\varpi_2}, & & \overline{z}_i^{\alpha_2} := (z_i^{\alpha_2})^* \in \EE_{-\varpi_2}.
\end{align*}
In fact, since these elements generate $\mathcal{O}_q(\mathrm{SU}_3)$ as a $*$-algebra, this gives an alternative complete description of the $\mathcal{P}^+$-grading. 


\section{The Lusztig--de Rham Complex of $\OO_q(\mathrm{F}_3)$}

In this section we present the main result of the paper, a $q$-deformation of the classical de Rham complex of the full flag  manifold of $\mathrm{SU}_3$. We extend the construction of \cite{ROBPSLusz}, where a quantum tangent space $T^{(0,1)}$ was constructed that $q$-deforms the classical anti-holomorphic tangent space of $\mathrm{F}_3$. We start by considering the $*$-extension of $T^{(0,1)}$, and then look at the maximal prolongation of its associated fodc. Finally, the quantum exterior algebra of the maximal prolongation is presented as a Frobenius algebra.

\subsection{The $*$-Extension of $T^{(0,1)}$} \label{subsection:Lusztig.F3}

In what follows, we find it convenient to denote the positive simple generators $E_1$ and $E_2$ by
 $E_{\alpha_1}$ and  $E_{\alpha_2}$ respectively. In addition we will also consider the non-simple root vector
$$
E_{\alpha_1+\alpha_2}:= [E_2,E_1]_{q^{-1}}.
$$
We can understand $E_{\alpha_1+\alpha_2}$ as a Lusztig root vector. Explicitly, recall that the Weyl group of $\frak{sl}_3$ is the symmetric group $S_3$, with standard generators $s_1$ and $s_2$. Then, with respect to the reduced decomposition $s_2s_1s_2$ of the longest element of $\frak{sl}_3$, the element $E_{\alpha_1+\alpha_2}$ is the associated non-simple quantum root vector. This choice of reduced decomposition for $w_0$ induces the following convex ordering (see \cite[Appendix A.1]{ROBPSLusz} for the definition of a convex order) on $\Delta^+:$
\begin{align*}
    \alpha_2 < \alpha_{1}+\alpha_{2} < \alpha_{1},
\end{align*}
and in the following we will write $\beta < \gamma$ if $\beta$ precedes $\gamma$ with respect to this convex ordering.
See \cite[\textsection 6.2]{KSLeabh}, or \cite[Appendix A]{ROBPSLusz}, for a more detailed presentation of Lusztig's root vectors. Consider now the subspace
\begin{align}
 T^{(0,1)} := \mathrm{span}_{\mathbf{C}}\Big\{E_{\alpha_1}, \, E_{\alpha_2}, E_{\alpha_1 + \alpha_2}\Big\}.  
\end{align}
As shown in \cite[Example 3.2.]{ROBPSLusz}, we have the identity 
\begin{align} \label{eqn:LongestECoproduct}
\Delta(E_{\alpha_1 + \alpha_2}) = E_{\alpha_1 + \alpha_2} \otimes K_1K_2 + q^{-1} \nu E_{\alpha_1} \otimes E_{\alpha_2}K_1 + 1 \otimes E_{\alpha_1 + \alpha_2}.      
\end{align}
Thus we see that $T^{(0,1)}$ is a quantum tangent space for $\OO_q(\mathrm{SU}_3)$.

Next we consider the quantum tangent space $T$, the $*$-extension of $T^{(0,1)}$. We denote $T^{(1,0)}:=(T^{(0,1)})^*$ and we see it is spanned by the elements
\begin{align*}
F_{\alpha_1} :=   E_{\alpha_1}^* = K_1F_{1}, & &  F_{\alpha_2} :=   E_{\alpha_2}^* = K_2F_{2}, & & F_{\alpha_1+\alpha_2} := E_{\alpha_1+\alpha_2}^* = q^{-1}K_1K_2[F_1,F_2]_{q^{-1}}.
\end{align*}
Moreover, we can conclude the following coproduct formula 
\begin{align} \label{eqn:LongestFCoproduct}
\Delta(F_{\alpha_1+\alpha_2}) = F_{\alpha_1+\alpha_2} \otimes K_1K_2 + \nu F_{\alpha_1} \otimes  F_{\alpha_2} K_1 + 1 \otimes F_{\alpha_1+\alpha_2}.
\end{align}

Denote the associated $*$-fodc by $\Omega^1_q(\mathrm{SU}_3)$, its cotangent space by $\Lambda^1$, and the basis of $\Lambda^1$ dual to the defining basis of $T$ by 
\begin{align*}
\Big\{e_{\gamma}, \, f_{\gamma} \,|\, \gamma \in \Delta^+ \Big\}.
\end{align*}
The following lemma gives explicit representatives for the cosets of the dual basis. These representatives will be used in a number of calculations in this section.

\begin{lem} \label{lem:uijeij}
It holds that 
\begin{align*}
e_{\alpha_1} =  [u_{21}], & & e_{\alpha_2} =  [u_{32}], & & e_{\alpha_1 + \alpha_2} =  [u_{31}],\\
f_{\alpha_1} =  [q u_{12}], & & f_{\alpha_2} =  [q u_{23}], & & f_{\alpha_1 + \alpha_2} =  [q^2 u_{13}].
\end{align*}
\end{lem}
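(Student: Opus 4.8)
The plan is to compute the cosets $[u_{ij}]$ directly inside $V^1 = B^+/I$ by pairing with the tangent-space generators $E_{\alpha_1}, E_{\alpha_2}, E_{\alpha_1+\alpha_2}, F_{\alpha_1}, F_{\alpha_2}, F_{\alpha_1+\alpha_2}$, using the defining derivation $\exd b = \sum_i (X_i^+ \triangleright b) \otimes e_i$ together with the fact that $\{e_\gamma, f_\gamma\}$ is the basis of $V^1 \subseteq \Lambda^1$ dual to the chosen basis of $T$. Concretely, for a generator $u_{ij} \in B^+$ (after subtracting the scalar $\e(u_{ij})$ where needed so that it lies in $B^+$), its coset in $V^1$ is determined by the values $\langle u_{ij}, X \rangle$ as $X$ ranges over the tangent basis: if $[u_{ij}] = \sum_\gamma \langle u_{ij}, X_\gamma\rangle\, e_\gamma + \sum_\gamma \langle u_{ij}, X_{\gamma}^{\mathrm{op}}\rangle\, f_\gamma$ (with $X_\gamma, X_\gamma^{\mathrm{op}}$ the $E$- and $F$-type generators), then one reads off the claimed identities once all these pairings are evaluated. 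So the proof reduces to a pairing computation.

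\textbf{Key steps.} First I would record the nonzero pairings of the simple generators with the matrix coefficients: from the definition of $\OO_q(\mathrm{SU}_3)$ we have $\langle E_1, u_{21}\rangle = \langle E_2, u_{32}\rangle = \langle F_1, u_{12}\rangle = \langle F_2, u_{23}\rangle = 1$ and the diagonal $K_i$-pairings, with all other pairings against non-unital generators vanishing. Second, using $F_{\alpha_1} = K_1 F_1$, $F_{\alpha_2} = K_2 F_2$, and the coproduct $\Delta(u_{ij}) = \sum_a u_{ia}\otimes u_{aj}$, I would compute $\langle u_{12}, K_1 F_1\rangle = \sum_a \langle u_{1a}, K_1\rangle\langle u_{a2}, F_1\rangle$; only $a = 1$ contributes, giving $\langle u_{11}, K_1\rangle \langle u_{12}, F_1\rangle = q^{-1}\cdot 1$, whence the $q$ normalisation $f_{\alpha_1} = [q\,u_{12}]$ makes the pairing with $F_{\alpha_1}$ equal to $1$ and all other tangent pairings zero. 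Similarly $\langle u_{23}, K_2 F_2\rangle = q^{-1}$, giving $f_{\alpha_2} = [q\, u_{23}]$. Third, for the length-two root I would use the explicit formula $F_{\alpha_1+\alpha_2} = q^{-1} K_1 K_2 [F_1,F_2]_{q^{-1}}$ (equivalently pair against $\Delta(F_{\alpha_1+\alpha_2})$ from \eqref{eqn:LongestFCoproduct}) and compute $\langle u_{13}, F_{\alpha_1+\alpha_2}\rangle$; the relevant term comes from $u_{11}\otimes u_{12}\otimes u_{23}$-type expansions of $\Delta^{(2)}(u_{13})$ paired against $K_1K_2$, $F_1$, $F_2$ appropriately, producing a factor $q^{-2}$ and hence $f_{\alpha_1+\alpha_2} = [q^2 u_{13}]$. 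One must also check that $[u_{13}]$ pairs to zero against $F_{\alpha_1}$ and $F_{\alpha_2}$ and against all $E_\gamma$, which follows since $u_{13}$ pairs trivially with $F_1, F_2$ individually and with the $E_i$. The computations for $e_{\alpha_1} = [u_{21}]$, $e_{\alpha_2} = [u_{32}]$ are immediate (the $E_i$ pairings are already $1$ with no $K$-twist, since $\Delta(E_i) = E_i\otimes K_i + 1\otimes E_i$ contributes the $1\otimes E_i$ leg), and $e_{\alpha_1+\alpha_2} = [u_{31}]$ follows from the coproduct \eqref{eqn:LongestECoproduct} for $E_{\alpha_1+\alpha_2}$, pairing $\Delta^{(2)}(u_{31})$ against the relevant legs; here the $q^{-1}\nu E_{\alpha_1}\otimes E_{\alpha_2} K_1$ term of the coproduct must be tracked, but the normalisation works out to $1$ because the $E_i$ legs carry no inverse-$q$ factor.

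\textbf{Main obstacle.} The only real subtlety is the bookkeeping for $e_{\alpha_1+\alpha_2}$ and $f_{\alpha_1+\alpha_2}$: one must expand the iterated coproduct of $u_{31}$ (resp.\ $u_{13}$) and pair it correctly against the two-term quantum root vector, keeping careful track of the $q$-powers coming from the diagonal $K$-pairings and of the constant $\nu$ appearing in \eqref{eqn:LongestECoproduct}–\eqref{eqn:LongestFCoproduct}. I expect everything to collapse to a single surviving term in each case, so that the stated normalising factors $q$ and $q^2$ are forced, but this is the step where a sign or $q$-power error is most likely, and it should be carried out explicitly. Finally one notes that these six cosets are linearly independent in $V^1$ (they pair to the standard basis of $T^*$), so they are genuinely the dual basis representatives and not merely cosets that pair correctly, completing the proof.
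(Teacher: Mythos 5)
Your proposal is correct and follows essentially the same route as the paper: the coset of $u_{ij}$ in the cotangent space is determined by its pairings against the basis of $T$, and these are evaluated via the coproduct $\Delta(u_{ij})=\sum_a u_{ia}\otimes u_{aj}$ (the paper cites its earlier reference for the $e_\gamma$ and carries out exactly your sample computation $\langle K_1F_1,u_{12}\rangle=q^{-1}$ for the $f_\gamma$). The only nit is notational: the lemma concerns $\Lambda^1$, the cotangent space of $\Omega^1_q(\mathrm{SU}_3)$, rather than $V^1=B^+/I$ over $\OO_q(\mathrm{F}_3)$ (the off-diagonal $u_{ij}$ do not lie in $\OO_q(\mathrm{F}_3)$), though this does not affect your argument.
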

\begin{proof}
The representatives for the positive basis elements were established in \cite[Lemma 3.6]{ROBPSLusz}. The negative representatives are produced similarly. For example, the calculation
\begin{align*}
\langle F_{\alpha_1}, u_{12}\rangle = \langle K_1F_{1}, u_{12}\rangle = \langle K_1, u_{11} \rangle \langle F_{1}, u_{12}\rangle = q^{-1},
\end{align*}
implies that $qu_{12}$ is a representative for the coset $f_{\alpha_1}$.
\end{proof}

The following proposition determines the right $\OO_q(\mathrm{SU}_3)$-module structure of $\Lambda^1$. The proof is completely analogous to that of \cite[Proposition 3.7]{ROBPSLusz} and so we omit it.

\begin{prop}\label{prop:rightmod}
The right $\OO_q(\mathrm{SU}_3)$-module structure of $\Lambda^1$ is determined by 
\begin{align*}
e_{\gamma} u_{kk} = q^{-( \gamma, \e_k) }e_{\gamma}, & & f_{\gamma} u_{kk} = q^{-( \gamma, \e_k) }f_{\gamma},\\
e_{\alpha_{1}}u_{32} = \nu e_{\alpha_1 + \alpha_2}, & & f_{\alpha_{1}}u_{23} = q^{-1}\nu f_{\alpha_1 + \alpha_2},
\end{align*}
with all other actions by the generators $u_{ij}$ being zero.
\end{prop}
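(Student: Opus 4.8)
The plan is to establish Proposition~\ref{prop:rightmod} by exploiting the framing calculus structure together with the explicit coset representatives of Lemma~\ref{lem:uijeij}. The key observation is that the right $\OO_q(\mathrm{SU}_3)$-module structure on $\Lambda^1$ is induced, via the framing calculus $\Omega^1_q(\mathrm{SU}_3) \cong \OO_q(\mathrm{SU}_3) \otimes \Lambda^1$, from the right multiplication in $\OO_q(\mathrm{SU}_3)$ pushed through the quotient map onto $B^+/I$. Concretely, for a coset $[u_{ij}]$ representing a dual basis element $e_\gamma$ or $f_\gamma$, the action $e_\gamma \cdot u_{kk}$ is computed by multiplying a representative $u_{ij}$ on the right by $u_{kk}$ inside $\OO_q(\mathrm{SU}_3)$, reducing $u_{ij}u_{kk}$ back to the span of the framing cotangent representatives modulo the ideal, and reading off the coefficients. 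Thus the entire proof reduces to a finite set of commutation-relation computations in $\OO_q(\mathrm{SU}_3)$ of the form $u_{ij}u_{kl}$, combined with the pairing data for $U_q(\frak{sl}_3)$.

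First I would recall the relevant FRT-type commutation relations among the generators $u_{ij}$ of $\OO_q(\mathrm{SU}_3)$ — in particular the relations $u_{ik}u_{jl} = q\, u_{jl}u_{ik}$ and $u_{il}u_{jk} = u_{jk}u_{il} + (q - q^{-1}) u_{ik}u_{jl}$ for appropriate index orderings — which govern how a cotangent representative absorbs a right multiplication by $u_{kk}$. Second, I would use the $\mathcal{P}^+$-grading of equation~\eqref{eqn:grading.on.gens}: since $\Lambda^1$ is a graded object and each $u_{kk}$ lies in the degree-zero component $\EE_0 = \OO_q(\mathrm{F}_3)$, the action of $u_{kk}$ preserves the grading, so $e_\gamma u_{kk}$ and $f_\gamma u_{kk}$ must again be scalar multiples of $e_\gamma$ and $f_\gamma$ respectively — this immediately forces the diagonal formulas, with the scalar $q^{-(\gamma,\e_k)}$ extracted from the weight of $u_{kk}$ acting on the relevant matrix element. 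Third, for the off-diagonal actions $e_{\alpha_1} u_{32}$ and $f_{\alpha_1} u_{23}$, I would carry out the direct computation: for instance $u_{21} u_{32}$ expanded via the commutation relations produces a term proportional to $u_{31} u_{22}$ (or the relevant reordering), which upon projecting to $B^+/I$ and using $[u_{22}] = $ (a scalar multiple of the class of $1$, which vanishes in $B^+$) together with $[u_{31}] = e_{\alpha_1+\alpha_2}$ yields the stated coefficient $\nu$. Finally, I would verify that all remaining products $e_\gamma u_{ij}$, $f_\gamma u_{ij}$ with $i \neq j$ reduce modulo $I$ either to zero or to terms outside the span of the cotangent representatives, hence are zero in $\Lambda^1$; here the right-ideal property of $I$ and a dimension count of $\Lambda^1$ (which has dimension $2|\Delta^+| = 6$) close the argument.

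The main obstacle I anticipate is the bookkeeping in the off-diagonal computations: correctly tracking which quadratic monomials $u_{ij}u_{kl}$ survive in $B^+/I$ and with what $q$-powers, since the ideal $I$ is defined by the vanishing of the tangent functionals $E_{\alpha_1}, E_{\alpha_2}, E_{\alpha_1+\alpha_2}, F_{\alpha_1}, F_{\alpha_2}, F_{\alpha_1+\alpha_2}$, and one must pair these against the relevant products using the coproduct formulas~\eqref{eqn:LongestECoproduct} and~\eqref{eqn:LongestFCoproduct}. This is exactly the type of calculation performed in \cite[Proposition 3.7]{ROBPSLusz} for the anti-holomorphic half, so the strategy is to mirror that argument verbatim for the holomorphic generators $f_\gamma$ and for the mixed actions, using the $*$-structure to transport identities from the $e_\gamma$ to the $f_\gamma$ where possible. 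Since the proposition explicitly states that the proof is "completely analogous to that of \cite[Proposition 3.7]{ROBPSLusz}", the cleanest exposition is to indicate precisely which substitutions ($E \leftrightarrow F$, $u_{i1} \leftrightarrow u_{i3}$, and the grading sign flips $\varpi_1 \leftrightarrow \varpi_2$) convert that proof into the present one, and to spell out only the one genuinely new ingredient, namely the interaction between the holomorphic and anti-holomorphic pieces captured by the vanishing of the cross actions.
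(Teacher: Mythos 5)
Your closing plan --- determine the class of a product $u_{ij}u_{kl}$ in $\Lambda^1 = \OO_q(\mathrm{SU}_3)^+/I$ by pairing it against the basis $\{E_\gamma, F_\gamma\}$ of $T$ via the coproduct formulas \eqref{eqn:LongestECoproduct} and \eqref{eqn:LongestFCoproduct}, mirroring \cite[Proposition 3.7]{ROBPSLusz} and using the $*$-map to carry the $e_\gamma$ identities over to the $f_\gamma$ --- is precisely the argument the paper intends (its proof is omitted exactly because it is ``completely analogous'' to that reference). However, two of the concrete steps sketched in your middle paragraph are wrong and would derail the computation if followed literally. First, the claim that each $u_{kk}$ lies in $\EE_0 = \OO_q(\mathrm{F}_3)$ is false: by \eqref{eqn:grading.on.gens} the $\mathcal{P}^+$-grading depends on the column index, so $u_{11} \in \EE_{-\varpi_1}$, $u_{22} \in \EE_{\varpi_1 - \varpi_2}$, $u_{33} \in \EE_{\varpi_2}$, and none of these is $\EE_0$. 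What actually forces the diagonal generators to act diagonally is that $e_\gamma, f_\gamma$ carry the six distinct weights $\pm\gamma$ for the $U_q(\frak{h})$-module structure on $\Lambda^1$ while $u_{kk}$ shifts that weight by zero; and the scalar $q^{-(\gamma,\e_k)}$ is not read off from any grading but from the pairing $\langle E_\gamma, u_{ij}u_{kk}\rangle = \langle E_\gamma, u_{ij}\rangle\langle K_\mu, u_{kk}\rangle$, with $K_\mu$ the group-like in the first leg of $\Delta(E_\gamma)$.

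Second, the proposed FRT-reordering route to $e_{\alpha_1}u_{32} = \nu\, e_{\alpha_1+\alpha_2}$ does not work. The quotient $A^+/I$ is a quotient of vector spaces, not of algebras, so there is no factorisation $[u_{31}u_{22}] = [u_{31}][u_{22}]$; and if, as you assert, the ``class of $u_{22}$ vanishes'', the term proportional to $u_{31}u_{22}$ would contribute $0$, not $\nu\, e_{\alpha_1+\alpha_2}$. Worse, the reduction itself collapses: both $u_{22}u_{31}$ and $u_{31}u_{22}$ have class $e_{\alpha_1+\alpha_2}$ in $\Lambda^1$ (the second because $\langle K_1K_2, u_{22}\rangle = 1$), so any expression of $u_{21}u_{32}$ as a multiple of their commutator has vanishing class, contradicting the statement. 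The coefficient $\nu$ in fact comes from the cross term of \eqref{eqn:LongestECoproduct}:
\[
\langle E_{\alpha_1+\alpha_2}, u_{21}u_{32}\rangle \;=\; q^{-1}\nu\,\langle E_{\alpha_1}, u_{21}\rangle\,\langle E_{\alpha_2}K_1, u_{32}\rangle \;=\; q^{-1}\nu\cdot 1\cdot q \;=\; \nu,
\]
all other terms vanishing, which compared with $\langle E_{\alpha_1+\alpha_2}, u_{31}\rangle = 1$ and Lemma \ref{lem:uijeij} gives the stated relation. So keep the pairing computation of your final paragraph as the actual proof, and discard the grading and commutation-relation shortcuts.
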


Just as in \cite[\textsection 3.4]{ROBPSLusz}, we now find it instructive to present the non-diagonal actions in the form of a graph. Arrange the basis elements in their natural upper and lower triangular form and draw an arrow from one basis element $b$ to another $b'$ if there exists a generator $u_{ij}$ such that $bu_{ij}$ is a scalar multiple of $b'$:
\begin{center}
\begin{tikzpicture}
  \draw (0,1.4) node[scale=1.5] (D1) {*}
  (1.5,-0.1) node[scale=1.5] (D2) {*}
  (3,-1.57) node[scale=1.5] (D3) {*}
        (1.5,1.5) node[circle,fill,inner sep=2pt] (X) {}
        (3,1.5) node[circle,fill,inner sep=2pt] (Y) {}
        (3,0) node[circle,fill,inner sep=2pt] (Z) {}
        (0,0) node[circle,fill,inner sep=2pt] (A) {}
        (0,-1.5) node[circle,fill,inner sep=2pt] (B) {}
        (1.5,-1.5) node[circle,fill,inner sep=2pt] (C) {};
  \draw[->, shorten >=2pt, shorten <=2pt, , >=latex, line width=0.5pt] (A) -- node[right] {} (B);
    \draw[->, shorten >=2pt, shorten <=2pt, , >=latex, line width=0.5pt] (X) -- node[right] {} (Y);
\end{tikzpicture}
\end{center}

\subsection{A Framing Calculus and a Quantum Principal Bundle}

We prove that the pair $(\OO_q(\mathrm{SU}_3),\Omega^1_q(\mathrm{SU}_3))$ gives a quantum principal bundle according to Brzezi\'nski and Majid \cite{BeggsMajid:Leabh,TBSM1}, see also the more recent monograph \cite[Chapter 5]{BeggsMajid:Leabh}. Since principal bundles will not feature elsewhere in the paper, we will not recall here the definition for a general principal comodule algebra. Instead, we recall the well-known fact that, for a quantum homogeneous space $B \subseteq A$, a left $A$-covariant fodc $\Omega^1(A)$ over $A$ gives a quantum principal bundle if and only if $\Omega^1(A)$ is right $\pi_B(A)$-covariant, which is to say, if and only if the following coaction is well-defined: 
\begin{align*}
\Delta_R: \Omega^1(A) \to \Omega^1(A) \otimes \pi_B(A), & & a'\exd a \mapsto a_{(1)}'\exd a_{(1)} \otimes \pi_B(a_{(2)}'a_{(2)}).
\end{align*}
In the following proposition we observe that our fodc on $\OO_q(\mathrm{SU}_3)$ is indeed right covariant.
\begin{prop}
For the quantum homogeneous space $\OO_q(\mathrm{F}_3)$, a quantum principal bundle is given by the pair $(\OO_q(\mathrm{SU}_3),\Omega^1_q(\mathrm{SU}_3))$. 
\end{prop}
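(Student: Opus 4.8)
The plan is to verify directly that the proposed right coaction $\Delta_R$ on $\Omega^1_q(\mathrm{SU}_3)$ is well-defined, since by the criterion recalled just before the statement this is equivalent to the pair $(\OO_q(\mathrm{SU}_3),\Omega^1_q(\mathrm{SU}_3))$ being a quantum principal bundle over $\OO_q(\mathrm{F}_3)$. Because $\Omega^1_q(\mathrm{SU}_3) \cong \OO_q(\mathrm{SU}_3) \otimes \Lambda^1$ is left $A$-covariant and free as a left module, it suffices to check that the prospective coaction restricts consistently to $\Lambda^1$, i.e.\ that the formula $e \mapsto e_{(1)} \otimes \pi_B(e_{(2)})$ for $e$ a representative of a cotangent coset descends to a well-defined right $\pi_B(A)$-comodule structure on $\Lambda^1$ compatible with the right $\OO_q(\mathrm{SU}_3)$-module structure described in Proposition \ref{prop:rightmod}. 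Here $\pi_B(A) = \OO_q(\mathrm{SU}_3)/\OO_q(\mathrm{F}_3)^+\OO_q(\mathrm{SU}_3)$ is the quantum isotropy subgroup, which in this $U_q(\frak h)$-invariant situation is the commutative coordinate algebra of a torus, concretely the group algebra of the weight lattice $\mathcal{P}^+ = \mathbb{Z}^2$.

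First I would pin down $\pi_B(A)$ explicitly: using the $\mathcal P^+$-grading \eqref{eqn:linebundledecomp}, the quotient map sends $u_{ij}$ to a generator that depends only on the column index $j$, so $\pi_B(A)$ is spanned by grouplikes $t_{\varpi_1}, t_{\varpi_2}$ with $\pi_B(u_{i1}) = t_{-\varpi_1}$, $\pi_B(u_{i2}) = t_{\varpi_1 - \varpi_2}$, $\pi_B(u_{i3}) = t_{\varpi_2}$, consistent with \eqref{eqn:grading.on.gens}. Next, using the representatives from Lemma \ref{lem:uijeij} together with the coproduct $\Delta(u_{ij}) = \sum_a u_{ia}\otimes u_{aj}$, I would compute $\Delta_R$ on each basis element of $\Lambda^1$. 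For the diagonal part this just reads off the weight: $\Delta_R(e_\gamma) = e_\gamma \otimes t_{-\gamma}$ and $\Delta_R(f_\gamma) = f_\gamma \otimes t_{\gamma}$ (up to the sign conventions of $(\gamma,\e_k)$), matching the diagonal right-module action $e_\gamma u_{kk} = q^{-(\gamma,\e_k)}e_\gamma$ in Proposition \ref{prop:rightmod}. For the off-diagonal terms — the ones coming from $\Delta(u_{31}) = u_{31}\otimes u_{11} + u_{32}\otimes u_{21} + u_{33}\otimes u_{31}$ and similarly for $u_{13}$ — the middle terms involve $u_{21}$ and $u_{23}$, which lie in $\OO_q(\mathrm{F}_3)^+ \cdot(\cdots)$? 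No: one must check that the potentially obstructing cross-terms $[u_{32}]\otimes \pi_B(u_{21})$ and the like either vanish after applying $\pi_B$ or reproduce exactly the actions $e_{\alpha_1}u_{32} = \nu e_{\alpha_1+\alpha_2}$, $f_{\alpha_1}u_{23} = q^{-1}\nu f_{\alpha_1+\alpha_2}$ recorded in Proposition \ref{prop:rightmod}. This is the compatibility that makes $\Lambda^1$ into a well-defined object in $\mathrm{Mod}^{\pi_B(A)}_{\OO_q(\mathrm{SU}_3)}$.

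The main obstacle, and the real content of the proof, is precisely this last consistency check on the off-diagonal actions: one must confirm that the two a priori different ways of producing $e_{\alpha_1+\alpha_2}$ (resp.\ $f_{\alpha_1+\alpha_2}$) — via the right multiplication $e_{\alpha_1}u_{32}$ and via the comodule structure term coming from $\Delta(u_{31})$ — are genuinely compatible rather than forcing a relation that collapses $\Lambda^1$; equivalently, that the ideal defining the cotangent space is stable under the candidate right coaction. I expect this to follow by a short direct computation using Lemma \ref{lem:uijeij}, the coproduct on the $u_{ij}$, and the coaction-module compatibility axiom, entirely parallel to the analogous verification in \cite{ROBPSLusz} for the anti-holomorphic sub-calculus; once $\Lambda^1$ is known to be a right $\pi_B(A)$-comodule compatible with the bimodule structure, the well-definedness of $\Delta_R$ on all of $\Omega^1_q(\mathrm{SU}_3)$ — hence the quantum principal bundle property — is immediate from left freeness and the Leibniz-type formula for $\Delta_R$ on $a'\exd a$.
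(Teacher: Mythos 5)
Your overall strategy is viable and genuinely different from the paper's. The paper never touches the coaction $\Delta_R$ directly: it invokes the dual criterion from \cite{ROBPSLusz} that right covariance of the fodc on $\OO_q(\mathrm{SU}_3)$ is equivalent to the identity $TU_q(\frak{h}) = T$ for the right adjoint action, and then observes that this is immediate because $T$ is spanned by root vectors, i.e.\ by weight vectors for $U_q(\frak{h})$. Your plan --- verifying that the defining ideal of the cotangent space is costable, equivalently that the induced map on $\Lambda^1$ is a well-defined right $\pi_B(A)$-comodule structure --- is the cotangent-side mirror image of the same condition, and it does go through, at the cost of an explicit computation the paper sidesteps entirely.

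There is, however, a concrete error in your description of the quantum isotropy subgroup which would derail the computation as written. It is not true that $\pi_B(u_{ij})$ depends only on the column index $j$: the projection $\pi_B$ kills all off-diagonal generators. Indeed $u_{21} = \sum_{k} z^{\alpha_1}_{2k}u_{k1}$ with each $z^{\alpha_1}_{2k} = u_{21}u^*_{k1} \in \OO_q(\mathrm{F}_3)^+$ (using $\sum_k u^*_{k1}u_{k1} = 1$), so $u_{21} \in B^+A$ and $\pi_B(u_{21}) = 0$, and similarly for every $u_{ij}$ with $i \neq j$. The correct statement is $\pi_B(u_{ij}) = \delta_{ij}\,t_i$ with $t_1,t_2,t_3$ grouplike and $t_1t_2t_3 = 1$, giving $\pi_B(A) \cong \OO(\mathbb{T}^2)$. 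With your stated $\pi_B(u_{21}) = t_{-\varpi_1} \neq 0$, the candidate coaction on $[u_{31}]$ would read $e_{\alpha_1+\alpha_2}\otimes t_{-\varpi_1} + e_{\alpha_2}\otimes t_{-\varpi_1}$, which already fails the counit axiom for a comodule. Once the projection is corrected, the cross-terms you flag as the ``main obstacle'' vanish identically, $\Delta_R$ acts diagonally on the weight basis of $\Lambda^1$, and there is nothing left to check --- which is precisely the paper's one-line argument, transported from the tangent space to the cotangent space.
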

\begin{proof}
As discussed in \cite[\textsection 2.2]{ROBPSLusz}, right covariance of the fodc would follow from the identity $TU_q(\frak{h}) = T$, where the action of $U_q(\frak{h})$ on $T$ is given by the right adjoint action. However, since $T$ is spanned by root vectors, this is clear. 
\end{proof}

Denote by $\Omega^1_q(\mathrm{F}_3)$ the restriction of $\Omega^1_q(\mathrm{SU}_3)$ to a left $\OO_q(\mathrm{SU}_3)$-covariant fodc on $\OO_q(\mathrm{F}_3)$, and denote the cotangent space of $\Omega^1_q(\mathrm{F}_3)$ by $V^1$. The following corollary gives an isomorphism between $V^1$ and the left invariant forms of $\Omega^1_q(\mathrm{SU}_3)$. 

\begin{cor} \label{cor:embedding}
An isomorphism in the category of $U_q(\frak{h})$-modules is given by 
\begin{align*} 
V^1 \to \Lambda^1, \quad [b] \mapsto [b].
\end{align*}
\end{cor}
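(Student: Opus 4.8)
The plan is to unwind the definitions and exhibit the map $[b] \mapsto [b]$ explicitly on the chosen basis. Recall that $V^1 = B^+/I$ where $B = \OO_q(\mathrm{F}_3)$, and $\Lambda^1 = \Omega^1_q(\mathrm{SU}_3)/\big(\OO_q(\mathrm{SU}_3)^+ \cdot \Omega^1_q(\mathrm{SU}_3)\big)$ is the space of left-coinvariant forms of the framing calculus, with basis $\{e_\gamma, f_\gamma \mid \gamma \in \Delta^+\}$. Since $\Omega^1_q(\mathrm{F}_3)$ is defined as the restriction of $\Omega^1_q(\mathrm{SU}_3)$ to $\OO_q(\mathrm{F}_3)$, and since a framing calculus by definition contains $V^1$ as a $U_q(\frak{h})$-submodule (here $U_q(\frak{h}) = \pi_B(\OO_q(\mathrm{SU}_3))^\circ$) of $\Lambda^1$, the map $[b] \mapsto [b]$ is well-defined and injective. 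So the only content is surjectivity, i.e.\ that $V^1$ and $\Lambda^1$ have the same dimension, namely $6$.

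First I would use Lemma \ref{lem:uijeij}, which already records that $e_{\alpha_1} = [u_{21}]$, $e_{\alpha_2} = [u_{32}]$, $e_{\alpha_1+\alpha_2} = [u_{31}]$, $f_{\alpha_1} = [qu_{12}]$, $f_{\alpha_2} = [qu_{23}]$, $f_{\alpha_1+\alpha_2} = [q^2 u_{13}]$ inside $\Lambda^1$. The key observation is that the elements $u_{21} - \delta, u_{32} - \delta, \dots$ (appropriately normalised so as to lie in $B^+$; concretely one replaces each $u_{ij}$ with $i \neq j$ by itself, since these lie in non-trivial $\mathcal P^+$-weight spaces $\EE_\lambda$ with $\lambda \neq 0$, hence automatically in $\OO_q(\mathrm{SU}_3)^+$ once one checks they lie in $\OO_q(\mathrm{F}_3)$) — but in fact the off-diagonal generators $u_{ij}$ do \emph{not} lie in $\OO_q(\mathrm{F}_3)$, so the correct representatives for the cosets in $V^1 = B^+/I$ must be built from degree-zero combinations such as the products $z^{\alpha_1}_{ij} = u_{i1}u^*_{j1}$ and $z^{\alpha_2}_{ij} = u_{i3}u^*_{j3}$ generating the two quantum projective planes inside $\OO_q(\mathrm{F}_3)$. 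So the real first step is to produce six elements of $B^+ = \OO_q(\mathrm{F}_3)^+$ whose images under $\exd$ (equivalently under the map $B^+ \to V^1 \hookrightarrow \Lambda^1$) are precisely $e_{\alpha_1}, e_{\alpha_2}, e_{\alpha_1+\alpha_2}, f_{\alpha_1}, f_{\alpha_2}, f_{\alpha_1+\alpha_2}$, thereby showing the composite $V^1 \to \Lambda^1$ hits all six basis vectors.

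The cleanest route: apply the projection $B^+ \to V^1$ to suitable elements $b_{ij}^{\alpha_1} := z^{\alpha_1}_{ij} - \e(z^{\alpha_1}_{ij}) \in B^+$ and $b_{ij}^{\alpha_2}$, and compute $X \triangleright b$ for $X$ ranging over the basis $\{E_{\alpha_1}, E_{\alpha_2}, E_{\alpha_1+\alpha_2}, F_{\alpha_1}, F_{\alpha_2}, F_{\alpha_1+\alpha_2}\}$ of $T$, using the coproduct formulae \eqref{eqn:LongestECoproduct}, \eqref{eqn:LongestFCoproduct}, the definition of the action $X \triangleright a = a_{(1)}\langle a_{(2)}, X\rangle$, and the known pairings of the generators. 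By duality of $T$ and $V^1$, the coset $[b] \in V^1$ is determined by the values $\langle X_i, b\rangle$, and one reads off that appropriate scalar multiples of the $b^{\alpha_k}_{ij}$ map to each of the six dual basis vectors $e_\gamma, f_\gamma$. Since the map $[b] \mapsto [b]$ sends these cosets in $V^1$ to the corresponding cosets in $\Lambda^1$ (both being computed by the same pairing with $T$), and the latter are the full basis of $\Lambda^1$ by Lemma \ref{lem:uijeij}, the map is surjective, hence an isomorphism. It is manifestly $U_q(\frak{h})$-equivariant since the inclusion $\OO_q(\mathrm{F}_3) \hookrightarrow \OO_q(\mathrm{SU}_3)$ intertwines the actions and the $\mathcal P^+$-grading is the $U_q(\frak{h})$-weight decomposition.

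The main obstacle will be the bookkeeping in the second step: producing explicit preimages in $\OO_q(\mathrm{F}_3)^+$ and verifying that they pair correctly with all six elements of $T$ (in particular with the non-simple vector $E_{\alpha_1+\alpha_2}$ and its $*$-conjugate, where the extra term $q^{-1}\nu E_{\alpha_1} \otimes E_{\alpha_2}K_1$ in the coproduct contributes). Everything else is formal — injectivity and well-definedness are immediate from the framing calculus axioms, and equivariance is automatic — so I would state those briefly and devote the bulk of the proof to the pairing computation, possibly organised as a small table of values $\langle X_i, b^{\alpha_k}_{jl}\rangle$.
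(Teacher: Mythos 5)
Your surjectivity argument is essentially the paper's: the paper simply records that each basis element of the tangent space $T$ pairs non-trivially with one of the elements $z^{\alpha_k}_{ij} \in \OO_q(\mathrm{F}_3)$ (e.g.\ $\langle E_{\alpha_1}, z^{\alpha_1}_{21}\rangle \neq 0$, $\langle F_{\alpha_1+\alpha_2}, z^{\alpha_1}_{13}\rangle \neq 0$, etc.), which is exactly the pairing computation you propose, just without tabulating every value. Your self-correction that the off-diagonal $u_{ij}$ do not lie in $\OO_q(\mathrm{F}_3)$ and must be replaced by the weight-zero products $z^{\alpha_k}_{ij}$ is the right move and is what the paper does.

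The gap is in your treatment of injectivity. You assert it is ``immediate from the framing calculus axioms,'' but in the paper the statement that $\Omega^1_q(\mathrm{SU}_3)$ is a framing calculus for $\Omega^1_q(\mathrm{F}_3)$ is a \emph{corollary of} this result, proved afterwards precisely by citing the surjectivity established here; invoking the framing property at this stage is circular. More substantively, injectivity is not formal: $V^1$ is defined as $\Phi(\Omega^1_q(\mathrm{F}_3)) = \Omega^1_q(\mathrm{F}_3)/B^+\Omega^1_q(\mathrm{F}_3)$ for the \emph{restricted} calculus, so the map $[b] \mapsto [b]$ into $\Lambda^1 = \Omega^1_q(\mathrm{SU}_3)/A^+\Omega^1_q(\mathrm{SU}_3)$ is injective only if
\begin{align*}
\Omega^1_q(\mathrm{F}_3) \cap A^+\Omega^1_q(\mathrm{SU}_3) = B^+\Omega^1_q(\mathrm{F}_3),
\end{align*}
which is not a tautology. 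The paper supplies this via the preceding proposition (the pair $(\OO_q(\mathrm{SU}_3),\Omega^1_q(\mathrm{SU}_3))$ is a quantum principal bundle, since $TU_q(\frak{h}) = T$) together with \cite[Theorem 2.1]{Maj}. Your later remark that cosets in $V^1$ are ``determined by the values $\langle X_i, b\rangle$'' presupposes the identification of the restricted calculus's cotangent space with $B^+/I_B$ for $I_B$ the annihilator of $T$ in $B^+$ --- but that identification is exactly the content being glossed over. You need to either quote the quantum principal bundle result or prove the displayed equality directly; as written, the injectivity half of the proof is missing.
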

\begin{proof}
The fact that this is an injective module map follows from \cite[Theorem 2.1]{Maj} and the fact that we have a quantum principal bundle. (See also the discussion in \cite[\textsection 4.2]{ROBPSLusz}.) Surjectivity follows from the fact that each basis element of the tangent space $T$ pairs non-trivially with an element of $\OO_q(\mathrm{F}_3)$. Explicitly, the pairings
\begin{align*}
\langle E_{\alpha_1}, z^{\alpha_{1}}_{21} \rangle, & & \langle E_{\alpha_2}, z^{\alpha_{2}}_{32} \rangle, & & \langle E_{\alpha_1+\alpha_2}, z^{\alpha_{1}}_{31} \rangle, & & \langle E_{\alpha_1+\alpha_2}, z^{\alpha_{2}}_{31} \rangle , \\ 
\langle F_{\alpha_1}, z^{\alpha_{1}}_{12} \rangle, & & \langle F_{\alpha_2}, z^{\alpha_{2}}_{23} \rangle, & & \langle F_{\alpha_1 + \alpha_2}, z^{\alpha_{1}}_{13} \rangle, & & \langle F_{\alpha_1 + \alpha_2}, z^{\alpha_{2}}_{13} \rangle.      
\end{align*}
are all non-zero scalars.
\end{proof}

Quantum principal bundles are of particular interest because they allow us to construct affine connections from principal connections on the bundle. In the quantum homogeneous space case $B \subseteq A$, a left $A$-covariant principal connection is equivalent to a left $\pi_B(A)$-comodule projection $\Lambda^1 \to \Lambda^1$ whose kernel is $V^1$, where $V^1$ is the cotangent space of the restriction of $\Omega^1(A)$ to $B$. Following the argument of \cite[Proposition 5.8]{CDOBBW}, we see that Corollary \ref{cor:embedding} implies the existence of a principal connection for the quantum bundle.

\begin{cor}
The zero map on $\Omega^1_q(\mathrm{SU}_3)$ is a left $\OO_q(\mathrm{SU}_3)$-covariant connection for the quantum principal bundle $(\OO_q(\mathrm{SU}_3),\Omega^1_q(\mathrm{SU}_3))$.
\end{cor}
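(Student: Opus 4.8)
\emph{Proof proposal.} The plan is to deduce this immediately from Corollary~\ref{cor:embedding} together with the standard description of principal connections on a homogeneous quantum principal bundle. First I would recall the relevant characterisation: for a quantum homogeneous space $B \subseteq A$ carrying a left $A$-covariant fodc $\Omega^1(A) \cong A \otimes \Lambda^1$ which is moreover a quantum principal bundle (i.e.\ also right $\pi_B(A)$-covariant, as established above for $\Omega^1_q(\mathrm{SU}_3)$), a left $A$-covariant principal connection is the same thing as a left $\pi_B(A)$-comodule projection $P \colon \Lambda^1 \to \Lambda^1$ with $\ker P = V^1$, where $V^1 \subseteq \Lambda^1$ is the cotangent space of the restriction of $\Omega^1(A)$ to $B$; the connection is then recovered by extending $P$ left $A$-linearly to $\Omega^1(A)$. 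This is precisely the set-up of \cite[Proposition~5.8]{CDOBBW}.

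Next I would invoke Corollary~\ref{cor:embedding}, which asserts that the canonical $U_q(\frak{h})$-module map $V^1 \to \Lambda^1$, $[b] \mapsto [b]$, is an isomorphism; equivalently, inside $\Lambda^1$ one has $V^1 = \Lambda^1$. Consequently the only endomorphism of $\Lambda^1$ whose kernel is $V^1$ is the zero map, and this is trivially idempotent and trivially a left $\pi_B(A)$-comodule morphism. Extending it left $A$-linearly to $\Omega^1_q(\mathrm{SU}_3)$ gives the zero endomorphism, which is therefore a left $\OO_q(\mathrm{SU}_3)$-covariant principal connection. The same observation shows this connection is in fact unique, since the affine space of principal connections is modelled on the left $\pi_B(A)$-comodule maps $\Lambda^1/V^1 \to V^1$, which here collapses to $\{0\}$.

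I do not expect a genuine obstacle here; the content is essentially formal. The one point worth a line of comment is that the lifted zero map really does satisfy the Brzeziński--Majid definition of a principal connection, namely that it splits the relevant sequence of horizontal forms as a left $A$-module, right $\pi_B(A)$-comodule map. This is automatic in the present situation because, as Corollary~\ref{cor:embedding} encodes, $\Omega^1_q(\mathrm{SU}_3)$ $q$-deforms only the six non-Cartan root directions of $\frak{sl}_3$, so the ``vertical'' part of the calculus is zero and the splitting condition is vacuous. Hence the corollary follows from Corollary~\ref{cor:embedding} and the argument of \cite[Proposition~5.8]{CDOBBW}, with no computation required.
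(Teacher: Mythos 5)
Your argument is correct and is essentially the paper's own: both identify a left covariant principal connection with a left $\pi_B(A)$-comodule projection on $\Lambda^1$ whose kernel is $V^1$, and then use the surjectivity in Corollary~\ref{cor:embedding} to conclude that $V^1 = \Lambda^1$, so the only such projection (hence the unique connection) is the zero map. Your additional remarks on uniqueness and the vanishing of the vertical part are consistent with, and implicit in, the paper's discussion preceding the corollary.
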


Finally, we note that Corollary \ref{cor:embedding} also implies that our fodc on $\OO_q(\mathrm{SU}_3)$ is a framing calculus for $\OO_q(\mathrm{F}_3)$.

\begin{cor}
It holds that $\Omega^1_q(\mathrm{SU}_3)$ is a framing calculus for $\Omega^1(\mathrm{F}_3)$.
\end{cor}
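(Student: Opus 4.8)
The plan is to unwind the definition of a framing calculus and verify each clause directly against what has just been proven. Recall that a framing calculus for $\Omega^1_q(\mathrm{F}_3)$ is a left $\OO_q(\mathrm{SU}_3)$-covariant fodc $\Omega^1(\OO_q(\mathrm{SU}_3)) \cong \OO_q(\mathrm{SU}_3) \otimes \Lambda^1$ over $\OO_q(\mathrm{SU}_3)$ which (i) restricts to $\Omega^1_q(\mathrm{F}_3)$, (ii) has the property that $V^1$ embeds into $\Lambda^1$, and (iii) has the property that the image of $V^1$ in $\Lambda^1$ is a right $\OO_q(\mathrm{SU}_3)$-submodule of $\Lambda^1$.

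Clause (i) is immediate from the construction: $\Omega^1_q(\mathrm{F}_3)$ was \emph{defined} to be the restriction of $\Omega^1_q(\mathrm{SU}_3)$ to $\OO_q(\mathrm{F}_3)$, and by the preceding proposition $(\OO_q(\mathrm{SU}_3), \Omega^1_q(\mathrm{SU}_3))$ is a quantum principal bundle, so in particular $\Omega^1_q(\mathrm{SU}_3)$ is a left $\OO_q(\mathrm{SU}_3)$-covariant fodc of the required form $\OO_q(\mathrm{SU}_3) \otimes \Lambda^1$, with $\Lambda^1$ its space of left-invariant forms. Clause (ii) is exactly the content of Corollary \ref{cor:embedding}: the map $V^1 \to \Lambda^1$, $[b] \mapsto [b]$, is there shown to be an injective morphism of $U_q(\frak{h})$-modules, which gives the required embedding of $V^1$ into $\Lambda^1$.

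The only point that needs genuine argument is clause (iii): that the image of $V^1$ under this embedding is a right $\OO_q(\mathrm{SU}_3)$-submodule of $\Lambda^1$. Here I would invoke Proposition \ref{prop:rightmod}, which gives the right $\OO_q(\mathrm{SU}_3)$-module structure of $\Lambda^1$ explicitly on the basis $\{e_\gamma, f_\gamma \mid \gamma \in \Delta^+\}$: each generator $u_{ij}$ sends a basis element to a scalar multiple of another basis element (or to zero). Since, by the pairings listed in the proof of Corollary \ref{cor:embedding}, the image of $V^1$ is precisely the span of all six elements $e_{\alpha_1}, e_{\alpha_2}, e_{\alpha_1+\alpha_2}, f_{\alpha_1}, f_{\alpha_2}, f_{\alpha_1+\alpha_2}$ — that is, all of $\Lambda^1$ — the submodule condition holds trivially. (Indeed the embedding $V^1 \hookrightarrow \Lambda^1$ is surjective, hence an isomorphism, so the image is all of $\Lambda^1$, which is tautologically a right submodule.)

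In short, there is no real obstacle: the corollary is a bookkeeping consequence of Corollary \ref{cor:embedding} together with the fact that $\Omega^1_q(\mathrm{SU}_3)$ is the ambient covariant calculus from which $\Omega^1_q(\mathrm{F}_3)$ was restricted. The one place where a reader might want a line of justification — the submodule clause — is dispatched by noting that the embedding of Corollary \ref{cor:embedding} is in fact an isomorphism $V^1 \cong \Lambda^1$, so the image is the whole of $\Lambda^1$. I would therefore write the proof as a two-sentence verification citing Corollary \ref{cor:embedding} for the embedding and its surjectivity, and the definition of $\Omega^1_q(\mathrm{F}_3)$ as the restriction of $\Omega^1_q(\mathrm{SU}_3)$ for the restriction clause.
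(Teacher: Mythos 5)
Your proposal is correct and follows essentially the same route as the paper: the paper's proof likewise observes that the embedding of Corollary \ref{cor:embedding} is surjective, so the image of $V^1$ is all of $\Lambda^1$ and is therefore trivially a right $\OO_q(\mathrm{SU}_3)$-submodule, with the restriction clause holding by the very definition of $\Omega^1_q(\mathrm{F}_3)$. Your extra invocation of Proposition \ref{prop:rightmod} is harmless but unnecessary, as you yourself note once surjectivity is in hand.
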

\begin{proof}
Since the embedding in Corollary \ref{cor:embedding} above is surjective, the image of $V^1$ is obviously a right $\OO_q(\mathrm{SU}_3)$ submodule of $\Lambda^1$.
Thus $\Omega^1_q(\mathrm{SU}_3)$ is a framing calculus, as claimed.
\end{proof}

\subsection{The Higher Forms}

In this section we calculate the degree two relations of the maximal prolongation of the fodc $\Omega^1_q(\mathrm{F}_3)$, and using an application of Bergman's diamond lemma show that the quantum exterior algebra has classical dimension.

\begin{thm}\label{thm:thereal}
A full set of relations for the quantum exterior algebra $V^{\bullet}$ is given by three sets of identities: The first set is given by 
\begin{align*}
 e_{\gamma} \wedge e_{\beta} = -q^{(\beta,\gamma)} e_{\beta} \wedge e_{\gamma}, & & 
 f_{\gamma} \wedge f_{\beta} = -q^{-(\beta,\gamma)} f_{\beta} \wedge f_{\gamma}, & &  \textrm{ for all } \beta \leq \gamma \in \Delta^+, 
\end{align*}
the second set is given by 
\begin{align*}
   e_{\gamma} \wedge f_{\beta} = -q^{(\beta, \gamma)} f_{\beta} \wedge e_{\gamma}, & &  \textrm{ for all } \beta \neq \gamma \in \Delta^+ , \textrm{ or for } \beta = \gamma = \alpha_1+\alpha_2,
\end{align*}
and the third set is given by  the two identities 
\begin{align*}
 & e_{\alpha_1} \wedge f_{\alpha_1} = -q^{2} f_{\alpha_1} \wedge e_{\alpha_1} -  \nu f_{\alpha_1+\alpha_2} \wedge e_{\alpha_1+\alpha_2},  \\
 & e_{\alpha_2} \wedge f_{\alpha_2} = -q^{2} f_{\alpha_2} \wedge e_{\alpha_2}  + \nu f_{\alpha_1+\alpha_2} \wedge e_{\alpha_1+\alpha_2}. & 
\end{align*}
\end{thm}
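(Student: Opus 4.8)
The plan is to compute the degree-two relations of the maximal prolongation directly from the description of $I^{(2)}$ given in Section~2, using the framing calculus $\Omega^1_q(\mathrm{SU}_3)$, and then invoke Bergman's diamond lemma to check that no further relations are forced, so that $V^\bullet$ has the dimension dictated by the six generators $e_\gamma, f_\gamma$. First I would recall that by Corollary~\ref{cor:embedding} the cotangent space $V^1$ embeds into $\Lambda^1$, so that $I^{(2)} \subseteq V^1 \otimes V^1 \subseteq \Lambda^1 \otimes \Lambda^1$, and the relations of $V^\bullet$ are precisely the elements $\omega(y) = [y_{(1)}^+] \otimes [y_{(2)}^+]$ for $y$ running over the ideal $I \subseteq B^+$ corresponding to the fodc. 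Concretely, $I$ is the set of $y \in \OO_q(\mathrm{F}_3)^+$ killed by all of $E_{\alpha_1}, E_{\alpha_2}, E_{\alpha_1+\alpha_2}, F_{\alpha_1}, F_{\alpha_2}, F_{\alpha_1+\alpha_2}$; using the explicit generators $z^{\alpha_j}_{ij} = u_{i*}u_{j*}^*$ of $\OO_q(\mathrm{F}_3)$ (equivalently the quadratic combinations of the $u_{ij}$), one produces a spanning set of quadratic elements of $I$ and applies $\omega$. The key input here is the coproduct of $\OO_q(\mathrm{SU}_3)$ together with the representatives from Lemma~\ref{lem:uijeij} and the right-module structure of $\Lambda^1$ from Proposition~\ref{prop:rightmod}; the non-trivial right actions $e_{\alpha_1} u_{32} = \nu e_{\alpha_1+\alpha_2}$ and $f_{\alpha_1} u_{23} = q^{-1}\nu f_{\alpha_1+\alpha_2}$ are exactly what will generate the inhomogeneous "third set" of relations, while the diagonal actions $e_\gamma u_{kk} = q^{-(\gamma,\e_k)} e_\gamma$ produce the $q$-commutation factors $q^{\pm(\beta,\gamma)}$ in the first two sets.

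The computation naturally splits according to which pair of generators $\{$holomorphic, anti-holomorphic$\}$ one is pairing. For the $ee$ and $ff$ relations I would pair the quantum tangent vectors against quadratic elements lying in a single line bundle $\EE_\lambda$; the braiding of the relative Hopf module $V^1 \otimes V^1$ is diagonal on most basis pairs, giving $e_\gamma \wedge e_\beta = -q^{(\beta,\gamma)} e_\beta \wedge e_\gamma$, and one must separately check the pair $(\alpha_1, \alpha_1+\alpha_2)$ where the off-diagonal term from Proposition~\ref{prop:rightmod} could a priori contribute — here one verifies that the would-be extra term cancels, leaving the stated homogeneous relation. The mixed $ef$ relations for $\beta \neq \gamma$, and for $\beta = \gamma = \alpha_1+\alpha_2$, are handled the same way, with $q^{(\beta,\gamma)}$ replaced by the appropriate power coming from the $*$-structure pairing $\langle F_\gamma, u_{ji}\rangle$. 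The delicate cases are $e_{\alpha_1}\wedge f_{\alpha_1}$ and $e_{\alpha_2}\wedge f_{\alpha_2}$: pairing against the relevant element of $\OO_q(\mathrm{F}_3)$ (for instance a $q$-determinant-type relation, or $z^{\alpha_1}_{21}z^{\alpha_1}_{12}$-type quadratics) and expanding the coproduct, the off-diagonal actions feed an $e_{\alpha_1+\alpha_2}\wedge f_{\alpha_1+\alpha_2}$ term into the relation, producing the inhomogeneous identities with the $\mp\nu f_{\alpha_1+\alpha_2}\wedge e_{\alpha_1+\alpha_2}$ correction; the opposite signs in the two identities come from the sign in \eqref{eqn:LongestECoproduct} versus the structure of the $F$ root vectors. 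I would also double-check that summing the two inhomogeneous relations gives something consistent (the $e_{\alpha_1+\alpha_2}\wedge f_{\alpha_1+\alpha_2}$ terms should combine predictably), which serves as an internal consistency check.

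Having produced this candidate set $R$ of $\binom{3}{2}\cdot 2 + \binom{3}{2}\cdot 2 + \text{(mixed)} \,$-many quadratic relations, the final step is to show it is complete, i.e.\ that $\dim V^k$ equals the classical Betti-type count (so $\dim V^\bullet = 2^6 = 64$, matching $\dim \Lambda^\bullet \mathbb{C}^6$). For this I would orient the relations to rewrite every monomial into a normal form in which the six letters appear in a fixed order (say $e_{\alpha_2}, e_{\alpha_1+\alpha_2}, e_{\alpha_1}, f_{\alpha_2}, f_{\alpha_1+\alpha_2}, f_{\alpha_1}$) with no letter repeated, and verify that all overlap ambiguities (the degree-three words where two rewriting rules both apply) resolve to the same normal form. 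The main obstacle is precisely this diamond-lemma verification: the inhomogeneous relations in the third set create genuinely non-trivial overlaps — for example a word like $e_{\alpha_1} f_{\alpha_1} e_{\alpha_1+\alpha_2}$ can be reduced two ways, one route passing through the $\nu f_{\alpha_1+\alpha_2} e_{\alpha_1+\alpha_2}$ correction term — and one has to confirm these close, which is where the specific numerical coefficients ($q^2$, $\nu = q - q^{-1}$, and the inner products $(\alpha_i,\alpha_j)$) genuinely matter. Once confluence is established, the set of irreducible words is in bijection with subsets of the six letters, giving the classical dimension and, simultaneously, confirming that $R$ is a full set of relations.
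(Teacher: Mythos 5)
Your derivation of the relations themselves follows essentially the same route as the paper: embed $V^1$ into $\Lambda^1$ via the framing calculus, take elements $y$ of the ideal $I\subseteq\OO_q(\mathrm{F}_3)^+$ cut out by the six root vectors, and compute $\omega(y)=[y_{(1)}^+]\otimes[y_{(2)}^+]$ using the representatives of Lemma \ref{lem:uijeij} and the right-module structure of Proposition \ref{prop:rightmod}; your identification of the off-diagonal actions as the source of the inhomogeneous third set is exactly what happens. The gap is in your completeness step. Bergman's diamond lemma, applied to your candidate set $R$, shows that $\mathcal{T}(V^1)/\langle R\rangle$ has the $2^{6}$ ordered monomials as a basis; since $R\subseteq I^{(2)}$, this only yields $\dim V^{\bullet}\leq 2^{6}$, i.e.\ it rules out $R$ being \emph{too large}. (This is precisely what the paper uses the diamond lemma for --- in the corollary \emph{following} the theorem, to prove linear independence of the monomial spanning set.) It cannot rule out $I^{(2)}$ containing relations beyond $\mathrm{span}(R)$, which is what ``full set of relations'' asserts; for that you need an upper bound on $\dim I^{(2)}$, equivalently a lower bound on $\dim V^{2}$, and confluence of $R$ gives you neither.

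The paper closes this gap with two ingredients your proposal omits. First, it writes down an explicit set $G=G_1\cup G_2\cup G_3\cup G_4$ of elements of $I$ and verifies by a dimension count, namely $\dim\big(\OO_q(\mathrm{F}_3)^+/\langle G\rangle\big)\leq 6$, that $G$ generates $I$ as a right ideal. Second --- and this is the step your phrase ``a spanning set of quadratic elements of $I$'' glosses over --- since $I$ is an infinite-dimensional right ideal and $\omega(yb)=\omega(y)\cdot b$, one must check that the linear span of $\omega(G)$ is already a right $\OO_q(\mathrm{F}_3)$-submodule of $V^1\otimes V^1$ (which follows from the module structure in Proposition \ref{prop:rightmod}); only then does $\omega(G)$ exhaust $I^{(2)}=\omega(I)$. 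Without some version of these two checks your argument establishes that the listed identities hold in $V^{\bullet}$, but not that they are \emph{all} of the degree-two relations.
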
\label{thm:THERELATIONS}
\begin{proof}
Using the description of the right $\OO_q(\mathrm{SU}_3)$-module structure of $\Lambda^1$ given above, one can observe the following set of identities, analogous to those in Lemma \ref{lem:uijeij}: 
\begin{align*}
[S(u_{21})] = -q e_{\alpha_1}, & & [S(u_{32})] = -q e_{\alpha_2}, & & [S(u_{31})] = -q e_{\alpha_1+\alpha_2},\\
[S(u_{12})] = -q^{-2} f_{\alpha_1}, & & [S(u_{23})] = -q^{-2} f_{\alpha_2}, & & [S(u_{13})] = -q^{-5} f_{\alpha_1 + \alpha_2}.
\end{align*}
Moreover, we have the second set of identities, analogous to those in Proposition \ref{prop:rightmod}:
\begin{align}
e_{\gamma} S(u_{kk}) = q^{(\gamma, \varepsilon_{k})} e_{\gamma}, & & f_{\gamma} S(u_{kk}) = q^{(\gamma, \varepsilon_{k})} f_{\gamma}, \label{antipode-action1}\\
e_{\alpha_{1}}S(u_{32}) = -\nu e_{\alpha_1 + \alpha_2}, & & f_{\alpha_{1}}S(u_{23}) = -q^{-3}\nu f_{\alpha_1 + \alpha_2}, \label{antipode-action2}
\end{align}
with all other actions by the antipoded generators $S(u_{ij})$ being zero.

From these identities, we can now see that the following set is the dual basis of $V^{1}$:
\begin{align*}
e_{\alpha_1}=q^{-1}[z_{21}^{\alpha_1}], & & e_{\alpha_2}=-q^{-1}[z_{32}^{\alpha_2}], & & e_{\alpha_1+\alpha_2}=q^{-1}[z_{31}^{\alpha_1}]=-q^{-1}[z_{31}^{\alpha_2}]\\
f_{\alpha_1}=-q^{2}[z_{12}^{\alpha_1}], & & f_{\alpha_2}=q^{2}[z_{23}^{\alpha_2}], & & f_{\alpha_1+\alpha_2}=-q^{5}[z_{13}^{\alpha_1}]=q^{3}[z_{13}^{\alpha_2}].
\end{align*}

We next introduce a set of generators for the ideal of the tangent space from the description of the right $\OO_q(\mathrm{SU}_3)$-module structure of $\Lambda^1$ given above. We divide the set of generators according to their polynomial degree. To do so we find it convenient to introduce the subset of $\mathbb{Z}_{> 0}^3$
$$
B := \Big\{ (1,2,1), (1,1,2), (1,3,1), (1,1,3), (2,3,2), (2,2,3), (2,3,1), (2,1,3) \Big\}.
$$
Consider now the degree one polynomials
\begin{align*}
    G_1 := \Big\{z^{\alpha_i}_{ab} \, | \, (i,a,b) \notin B \Big\} \cup \Big\{z^{\alpha_1}_{31} + z^{\alpha_2}_{31}, \, q^2z^{\alpha_1}_{13} + z^{\alpha_2}_{13}\Big\}
\end{align*}
Next consider the quadratic polynomials
\begin{align*}
G_2 := & \Big\{z^{\alpha_i}_{kl}(z^{\alpha_p}_{ab})^+ \,|\, (i,k,l) \in B \backslash \{(1,2,1), \, (2,3,2)\}, \, p=1,2, \, a,b = 1,2,3\Big\},\\
G_3 := &  \Big\{z^{\alpha_i}_{kl}(z^{\alpha_p}_{ab})^+ \,|\, (i,k,l) \in B, \, (p,a,b) \neq (2,3,2),(2,2,3) \Big\},\\
G_4 := & \Big\{z^{\alpha_1}_{21}z^{\alpha_2}_{32} - \nu z^{\alpha_2}_{31}, \, z^{\alpha_1}_{12}z^{\alpha_2}_{23} - \nu z^{\alpha_1}_{13} \Big\}.
\end{align*}
%
%
Collecting these elements together gives us our proposed set of generators 
$$
G := G_1 \cup G_2 \cup G_3 \cup G_4.
$$
Indeed, since it is clear that 
$$
\mathrm{dim}\left(\OO_q(\mathrm{F}_3)^+/\langle G \rangle \right) \leq 6,
$$
where $\langle G \rangle$ is the right ideal of $\OO_q(\mathrm{F}_3)^+$ generated by the elements of $G$, we see that $G$ gives a full set of generators.


Calculating the action of the map $\omega$ on these generators is now a routine calculation, as explicitly presented in \cite[Proposition 5.8]{MMF2} for the case of quantum projective space, and in \cite[\textsection 3.3]{ROBPSLusz} for the anti-holomoprhic complex of the $A$-series full quantum flag manifold. As an example, we take the generator $z_{22}^{\alpha_1}$, and note that 
\begin{eqnarray}
\omega(z_{22}^{\alpha_1}) &=& \sum_{a,b}[u_{2a}S(u_{b2})] \otimes [z_{ab}^{\alpha_1}] \nonumber\\
&=& [u_{22}S(u_{12})] \otimes [z_{21}^{\alpha_1}]+ [u_{23}S(u_{12})] \otimes [z_{31}^{\alpha_1}] \nonumber\\
&&+[u_{21}S(u_{22})] \otimes [z_{12}^{\alpha_1}]+[u_{21}S(u_{32})] \otimes [z_{13}^{\alpha_1}].\label{w(g)} 
\end{eqnarray} 
Using identities in \ref{antipode-action1} and \ref{antipode-action2}, we compute that:
\begin{align*}
[u_{22}S(u_{12})]=-q^{-2}f_{\alpha_1},\, & & [u_{23}S(u_{12})]=0,~~~~~\,~~~~~\\
[u_{21}S(u_{22})]=q^{-1}e_{\alpha_1},~~~ & &  [u_{21}S(u_{32})]=-\nu e_{\alpha_1 + \alpha_2}.
\end{align*}
Substituting these values in Eq. \ref{w(g)} we get:
$$w(z_{22}^{\alpha_1})=-q^{-1}f_{\alpha_1}\otimes e_{\alpha_1}-q^{-3}e_{\alpha_1}\otimes f_{\alpha_1}+q^{-5}\nu e_{\alpha_1+\alpha_2} \otimes f_{\alpha_1+\alpha_2}.$$
Therefore, we get the relation
$$e_{\alpha_1}\wedge f_{\alpha_1}=-q^{2}f_{\alpha_1}\wedge e_{\alpha_1}+q^{-2}\nu e_{\alpha_1+\alpha_2}\wedge f_{\alpha_1+\alpha_2}.$$
Continuing as such gives us the other claimed relations. Finally, we observe the description of the right $\OO_q(\mathrm{SU}_3)$-module structure of $\Lambda^1$ given in Proposition \ref{prop:rightmod} implies that the relations form a right $\OO_q(\mathrm{F}_3)$-submodule of $V^1 \otimes V^1$. Thus they give a full set of relations.
\end{proof}

\begin{cor}
For  $k=1, \dots, 6 = |\Delta|$, a basis of $V^{k}$ is given by 
\begin{align*}
\Big\{  e_{\gamma_1} \wedge \cdots \wedge e_{\gamma_a} \wedge f_{\gamma_1} \wedge \cdots \wedge f_{\gamma_b} \,|\, \gamma_1 < \cdots < \gamma_k \in \Delta^+ \Big\}. 
\end{align*}
In particular, it holds that
\begin{align*}
\mathrm{dim}\Big(V^{k}\Big) = \binom{\,|\Delta|\,}{k}, & & \textrm{ and } & & \mathrm{dim}\Big(V^{\bullet}\Big) = 2^{|\Delta|}.
\end{align*}
\end{cor}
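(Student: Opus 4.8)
The plan is to deduce the corollary from Theorem~\ref{thm:thereal} by an application of Bergman's diamond lemma, realising the displayed set as the irreducible monomials of a confluent reduction system for the presentation of $V^\bullet$ given there. I would first fix the total order
\[
e_{\alpha_2} \,<\, e_{\alpha_1+\alpha_2} \,<\, e_{\alpha_1} \,<\, f_{\alpha_2} \,<\, f_{\alpha_1+\alpha_2} \,<\, f_{\alpha_1}
\]
on the generators of $V^1$ (all $e$'s before all $f$'s, and within each block the convex order of Subsection~\ref{subsection:Lusztig.F3}) and extend it to the degree-lexicographic order on words; this is legitimate since every relation of Theorem~\ref{thm:thereal} is homogeneous of degree two. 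Turning those relations into rewriting rules --- using that $q$ is not a root of unity, so that $e_\gamma\wedge e_\gamma = f_\gamma\wedge f_\gamma = 0$, and substituting $-q^{-2}\,e_{\alpha_1+\alpha_2}\wedge f_{\alpha_1+\alpha_2}$ for $f_{\alpha_1+\alpha_2}\wedge e_{\alpha_1+\alpha_2}$ wherever the latter occurs on a right-hand side --- one sees that the leading monomials are exactly the repeated and out-of-order pairs $e_\gamma\wedge e_\beta$ and $f_\gamma\wedge f_\beta$ with $\beta\leq\gamma$ together with all nine mixed pairs $f_\beta\wedge e_\gamma$. Consequently the irreducible monomials are precisely the words $e_{\gamma_1}\wedge\cdots\wedge e_{\gamma_a}\wedge f_{\delta_1}\wedge\cdots\wedge f_{\delta_b}$ with $\gamma_1<\cdots<\gamma_a$ and $\delta_1<\cdots<\delta_b$ strictly increasing in the convex order, that is, exactly the asserted basis.

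The next step is to verify confluence. Every leading monomial has length two, so there are no inclusion ambiguities, and the overlap ambiguities are the length-three words of one of the shapes $e_\gamma\wedge e_\beta\wedge e_\delta$, $f_\gamma\wedge f_\beta\wedge f_\delta$, $f_\beta\wedge e_\gamma\wedge e_\delta$, or $f_\gamma\wedge f_\beta\wedge e_\delta$. For the first two shapes, resolvability is exactly the statement that the subalgebra generated by the $e_\gamma$ (respectively by the $f_\gamma$) is a diagonally braided quantum exterior algebra of classical dimension, which is already contained in \cite{ROBPSLusz} (the $f$-case then following by the $*$-structure). For the two mixed shapes one reduces in each of the two possible ways and compares normal forms; here the one delicacy is that whenever one of the two third-set relations is invoked it produces terms indexed by the non-simple root $\alpha_1+\alpha_2$, so the reductions must be pushed far enough to re-sort, and where necessary annihilate, the resulting $e_{\alpha_1+\alpha_2}$- and $f_{\alpha_1+\alpha_2}$-monomials. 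This finite but fiddly bookkeeping --- in which the precise coefficients $-q^2$ and $\pm\nu$ of the third-set relations enter essentially --- is the main obstacle. Granting it, Bergman's lemma identifies the irreducible monomials above with a vector-space basis of $V^\bullet$.

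It remains to count. A degree-$k$ normal form is determined by a choice of $a$ of the three positive roots for its $e$-block and of the remaining $b=k-a$ among the three positive roots for its $f$-block, so by the Chu--Vandermonde identity
\[
\dim\!\big(V^k\big) \,=\, \sum_{a=0}^{k}\binom{3}{a}\binom{3}{k-a} \,=\, \binom{6}{k} \,=\, \binom{|\Delta|}{k},
\]
and summing over $k$ gives $\dim(V^\bullet)=\sum_{k=0}^{6}\binom{6}{k}=2^{6}=2^{|\Delta|}$, as claimed. (A shorter route to the upper bound $\dim V^\bullet\leq 2^{|\Delta|}$ is to note that the $e$- and the $f$-relations alone exhibit the corresponding subalgebras as quotients of the classical-dimensional calculi of \cite{ROBPSLusz}, while the mixed relations let any monomial be rewritten as an $e$-word times an $f$-word; but the matching lower bound still needs the confluence check, so I would present the diamond-lemma argument as the main one.)
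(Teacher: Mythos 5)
Your proposal is correct and follows essentially the same route as the paper: both deduce the basis from Theorem \ref{thm:thereal} via Bergman's diamond lemma applied to the quadratic reduction system, note the absence of inclusion ambiguities, and resolve the overlap ambiguities (the paper works one example explicitly; you defer the same finite check), with the only cosmetic difference being that you order the generators so that normal forms are $e$-words followed by $f$-words, matching the displayed basis directly, whereas the paper's ordering puts the $f$'s first. Your closing Vandermonde count of the normal forms is the same (implicit) counting the paper relies on.
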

\begin{proof}
It is clear from the set of relations given in Theorem \ref{thm:THERELATIONS} that the proposed basis is a spanning set. To prove that its elements are linearly independent, let $\textbf{M}$ be the set of monomials in the basis elements of $V^{1}$, and let $S_{\Delta^+} :=(W_{\Delta^+},f_{\Delta^+})$ be the reduction system in the tensor algebra $\mathcal{T}(V^{1})$ given by the family of pairs
\begin{align*}
& \big(e_{\gamma} \otimes e_{\beta} ,\; -q^{(\beta,\gamma)} e_{\beta} \otimes e_{\gamma}\big),\quad
 \big(f_{\gamma} \otimes f_{\beta},-q^{-(\beta,\gamma)} f_{\beta} \otimes f_{\gamma}\big), & &  \textrm{ for all } \beta \leq \gamma \in \Delta^+,
\\&(e_{\gamma} \otimes f_{\beta}, \; -q^{(\beta,\gamma)} f_{\beta} \otimes e_{\gamma}), & & \textrm{ for all } \beta \neq \gamma \in \Delta^+ , \\
& & & \textrm{ \, or for } \beta = \gamma = \alpha_1+\alpha_2, 
\end{align*}
and together with the two pairs
\begin{align*}
&\big(e_{\alpha_1} \otimes f_{\alpha_1} ,\; -q^{2} f_{\alpha_1} \otimes e_{\alpha_1} -  \nu f_{\alpha_1+\alpha_2} \otimes e_{\alpha_1+\alpha_2}\big), \\
 & \big(e_{\alpha_2} \otimes f_{\alpha_2} ,\; -q^{2} f_{\alpha_2} \otimes e_{\alpha_2}  + \nu f_{\alpha_1+\alpha_2} \otimes e_{\alpha_1+\alpha_2}\big).
\end{align*}
Let $\ll$ denote the total ordering such that for every $\beta,\gamma \in \Delta^+$
we have $f_\beta \ll e_\gamma$ and 
$\beta\leq \gamma \in \Delta^+
$ 
implies that $ e_\beta \ll e_\gamma$ and $f_\gamma \ll f_\beta.$
Then $S_{\Delta^+}$ is a reduction system compatible with the ordering $\ll.$ Since $S_{\Delta^+}$ is composed of homogeneous polynomials of degree $2$ there are no inclusion ambiguities.  
It is easy to verify that the overlap ambiguities can be solved. As an example consider the expression $e_{\alpha_{1}} \otimes e_{\alpha_{2}}\otimes f_{\alpha_{1}}.$ We write the two sequences of reductions associated to $S_{\Delta^+}$ for this expression by enclosing between parenthesis the monomials on which a reduction acts non-trivially at each step. We have
\begin{align*}
    (e_{\alpha_{1}} \otimes e_{\alpha_{2}})\otimes f_{\alpha_{1}}& \mapsto -q^{-1}e_{\alpha_{2}}\otimes (e_{\alpha_{1}} \otimes f_{\alpha_{1}})
    \\&\mapsto q (e_{\alpha_{2}}\otimes  f_{\alpha_{1}}) \otimes e_{\alpha_{1}}+  q^{-1}\nu (e_{\alpha_{2}}\otimes f_{\alpha_{1}+\alpha_{2}}) \otimes e_{\alpha_{1}+\alpha_{2}}
    \\& \mapsto - f_{\alpha_{1}} \otimes  e_{\alpha_{2}} \otimes e_{\alpha_{1}} - f_{\alpha_{1}+\alpha_{2}} \otimes  e_{\alpha_2} \otimes e_{\alpha_{1}+\alpha_{2}}
\end{align*}
and 
\begin{align*}
    e_{\alpha_{1}} \otimes (e_{\alpha_{2}}\otimes f_{\alpha_{1}})& \mapsto -q^{-1}(e_{\alpha_{1}}\otimes f_{\alpha_{1}}) \otimes e_{\alpha_{2}}
    \\&\mapsto q^{-1}f_{\alpha_1}\otimes (e_{\alpha_1}\otimes e_{\alpha_2})+ q^{-1}\nu f_{\alpha_1+\alpha_2}\otimes (e_{\alpha_1+\alpha_2}\otimes e_{\alpha_2})
    \\& \mapsto - f_{\alpha_{1}} \otimes  e_{\alpha_{2}} \otimes e_{\alpha_{1}} - f_{\alpha_{1}+\alpha_{2}} \otimes  e_{\alpha_2} \otimes e_{\alpha_{1}+\alpha_{2}}.
\end{align*}

The other overlap ambiguities are solved in a similar way, hence from Bergmann's diamond lemma \cite{BergmanDiam} the set of algebra relations \ref{thm:THERELATIONS} is linearly independent and the spanning set given above is a basis.
\end{proof}

\begin{remark}
The form of the anti-holomorphic relations given above imply that the dc $\Omega^{(0,\bullet)}_q(\mathrm{F}_3)$ given in \cite[\textsection 5]{ROBPSLusz} is in fact the maximal prolongation of the fodc $\Omega^{(0,1)}_q(\mathrm{F}_3).$
\end{remark}

\begin{rem}
Unlike the fodc $\Omega^{(0,1)}_q(\mathrm{SU}_{n+1})$ considered in \cite[\textsection 3]{ROBPSLusz}, the maximal prolongation of $\Omega^1_q(\mathrm{SU}_3)$ does not have classical dimension, and restricts to a dc over $\OO_q(\mathrm{F}_{3})$ of non-classical dimension. This is our motivation for calculating the maximal prolongation of $\Omega^1_q(\mathrm{F}_3)$ directly.
\end{rem}

\subsection{Restriction to the Heckenberger--Kolb Calculus of the Quantum Projective Plane} \label{subsection:HKrestriction}

As explained in \textsection \ref{subsection:prelims.full.q.flag}, we have two copies of the quantum projective plane $\OO_q(\mathbb{CP}^2)$, arising as subalgebras of $\mathcal{O}_q(\mathrm{F}_3)$. Each subalgebra comes endowed with a left $\OO_q(\mathrm{SU}_3)$-covariant fodc of classical dimension $\Omega^1_q(\mathbb{CP}^2)$, known as the \emph{Heckenberger--Kolb fodc} \cite{HK}. For the copy of the quantum projective plane generated by the elements $z^{\alpha_1}_{ij}$, the quantum tangent space $T \subseteq \OO_q(\mathbb{CP}^2)^{\circ}$ of $\Omega^1_q(\mathbb{CP}^2)$ is given by  
\begin{align*}
T := \{F_1, \, F_2F_1, \, E_1, \, E_2E_1\}.
\end{align*}
The tangent space of the second copy of the quantum projective plane is given by 
\begin{align*}
T := \{F_2, \, F_1F_2, \, E_2, \, E_1E_2\}.
\end{align*}
The following lemma is an easy extension, for the rank $2$ case, of \cite[Proposition 4.2]{ROBPSLusz} to include the holomorphic parts of the Heckenberger--Kolb dc, and so, we omit it. 

\begin{lem}
The fodc $\Omega^1_q(\mathrm{F}_3)$ restricts to the Heckenberger--Kolb fodc for both copies of the quantum projective plane.
\end{lem}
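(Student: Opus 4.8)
The statement asserts that the restriction of the full calculus $\Omega^1_q(\mathrm{F}_3)$ to either copy of $\OO_q(\mathbb{CP}^2)$ recovers the Heckenberger--Kolb fodc. Since everything in sight is left $\OO_q(\mathrm{SU}_3)$-covariant, and covariant fodc over a quantum homogeneous space are classified by their quantum tangent spaces (via the bijective correspondence recalled in \textsection\ref{subsection:remarksQHTS}, see \cite{HKTangent}), the plan is to reduce the claim to an identification of tangent spaces inside $\OO_q(\mathbb{CP}^2)^{\circ}$. Concretely, I would: first, note that the restriction of $\Omega^1_q(\mathrm{F}_3)$ to the subalgebra generated by the $z^{\alpha_1}_{ij}$ is again a left $\OO_q(\mathrm{SU}_3)$-covariant fodc over $\OO_q(\mathbb{CP}^2)$, hence corresponds to some tangent space $T'$; second, compute $T'$ explicitly and check that it coincides with the Heckenberger--Kolb tangent space $T = \{F_1,\, F_2F_1,\, E_1,\, E_2E_1\}$; third, invoke the classification to conclude the two fodc agree. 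The analogous argument with the roles of $\alpha_1$ and $\alpha_2$ interchanged handles the second copy.

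The key computation is step two. Recall that the tangent space $T$ of the $*$-extension considered in \textsection\ref{subsection:Lusztig.F3} is spanned by $E_{\alpha_1}, E_{\alpha_2}, E_{\alpha_1+\alpha_2}$ and their $*$-images $F_{\alpha_1} = K_1F_1$, $F_{\alpha_2} = K_2F_2$, $F_{\alpha_1+\alpha_2} = q^{-1}K_1K_2[F_1,F_2]_{q^{-1}}$. The tangent space of the restricted calculus is obtained by taking the image of $T$ under the dual map $\OO_q(\mathrm{SU}_3)^{\circ} \to \OO_q(\mathbb{CP}^2)^{\circ}$ induced by the inclusion $\OO_q(\mathbb{CP}^2) \hookrightarrow \OO_q(\mathrm{SU}_3)$ — equivalently, restricting functionals to the subalgebra — and then discarding those functionals which vanish identically on $\OO_q(\mathbb{CP}^2)$. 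So I would check, using the grading \eqref{eqn:grading.on.gens} and the pairing formulae from the preliminaries, which of the six basis elements of $T$ pair non-trivially with $\OO_q(\mathbb{CP}^2) = \EE_0 \cap (\text{subalgebra generated by } z^{\alpha_1}_{ij})$, and that the surviving functionals, up to the equivalence of agreeing on the subalgebra, are exactly $F_1, E_1$ together with the ``length-two'' combinations that reduce to $F_2F_1$ and $E_2E_1$ on $\OO_q(\mathbb{CP}^2)$. Here the parenthetical remark that ``this is an easy extension of \cite[Proposition 4.2]{ROBPSLusz} to include the holomorphic parts'' tells us the anti-holomorphic half ($E$-part versus $F_1, F_2F_1$, or vice versa depending on conventions) is already done there; the new content is that the $*$-conjugate (holomorphic) half behaves symmetrically, which follows because the $*$-structure on $\OO_q(\mathrm{SU}_3)$ restricts to the $*$-structure on $\OO_q(\mathbb{CP}^2)$ and intertwines the two halves.

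The main obstacle — such as it is — is purely bookkeeping: one must be careful that the Lusztig root vector $E_{\alpha_1+\alpha_2} = [E_2,E_1]_{q^{-1}}$, when restricted to the relevant $\OO_q(\mathbb{CP}^2)$, genuinely lands on (a scalar multiple of) the Heckenberger--Kolb generator $E_2E_1$ rather than on something strictly larger or smaller, and similarly that $F_{\alpha_1+\alpha_2}$ restricts onto $F_2F_1$. This is where the convex-ordering choice $\alpha_2 < \alpha_1+\alpha_2 < \alpha_1$ and the explicit coproduct \eqref{eqn:LongestECoproduct} enter, but it is a finite check on the generators $u_{ij}$ of $\OO_q(\mathrm{SU}_3)$. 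Once the tangent spaces are matched, the identification of the fodc is immediate from \cite{HKTangent}. Because all of this is a routine transcription of the rank-$n$ argument of \cite[Proposition 4.2]{ROBPSLusz} to the rank-$2$ case with the holomorphic summand appended, and since it contains no new ideas beyond what is already in \cite{ROBPSLusz}, the detailed verification is omitted.
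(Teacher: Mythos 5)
Your proposal is correct and follows essentially the same route as the paper, which in fact omits the proof entirely with exactly the justification you give: it is the rank-$2$ instance of \cite[Proposition 4.2]{ROBPSLusz} for the anti-holomorphic half, extended to the holomorphic half via the $*$-structure. Your reduction to matching quantum tangent spaces (checking that $E_2$, $F_{\alpha_2}$ restrict to zero and that $E_{\alpha_1+\alpha_2}$, $F_{\alpha_1+\alpha_2}$ restrict to scalar multiples of $E_2E_1$, $F_2F_1$ on the relevant copy of $\OO_q(\mathbb{CP}^2)$) is precisely the intended argument.
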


Let $A$ be an algebra endowed with a dc $\Omega^{\bullet}(A)$, and let $B \subset A$ be a subalgebra. The \emph{restriction} $\Omega^{\bullet}(B)$ of $\Omega^{\bullet}(A)$ to $B$ is the subalgebra of $\Omega^{\bullet}(A)$ generated by the elements $b$ and $\exd b'$, for $b,\, b' \in B$. We note that $\Omega^{\bullet}(B)$ is of course a dc. We note that the first-order part  $\Omega^1(B)$ of the restriction dc is, of course, equal to the restriction of $\Omega^1(A)$ to $B$.

Denoting by $(\mathrm{Max}(\Omega^1(B)), \, \exd')$ the maximal prolongation of $\Omega^1(B)$ we have a surjection
\begin{align*}
p: \mathrm{Max}(\Omega^1(B)) \to \Omega^{\bullet}(B), 
\end{align*}
uniquely defined by 
$$
p(b_0\exd b_1 \wedge \cdots \wedge \exd b_k) = b_0\exd' b_1 \wedge \cdots \wedge \exd' b_k.
$$
It is clear that if $p$ is an injective map, then we have an isomorphism of dc.

For the case that $A$ is a Hopf algebra, $B$ a quantum homogeneous $A$-space, and $\Omega^{\bullet}(A)$ a left $A$-covariant dc, then it is clear that $p$ is a morphism in the category of relative Hopf modules. Thus we can see that $p$ is injective if and only if $\Phi(p)$ is injective.

For the case of the quantum projective plane, we can calculate the dimension of the restricted dc using the same approach as used in \cite[Proposition 4.8]{ROBPSLusz}. This shows us that the dimension is classical, just as is well-known for the Heckenberger--Kolb dc \cite[\textsection 3.3]{HKdR}. Thus we see that $\Phi(p)$ is injective, giving us the following proposition.

\begin{prop}
The dc $\Omega_q(\mathrm{F}_3)$ restricts to the Heckenberger--Kolb dc for both copies of $\OO_q(\mathbb{CP}^2)$.
\end{prop}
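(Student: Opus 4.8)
The plan is to exploit the canonical surjection $p \colon \mathrm{Max}(\Omega^1(B)) \to \Omega^{\bullet}(B)$ discussed above, where $B \subseteq \OO_q(\mathrm{F}_3)$ denotes either copy of $\OO_q(\mathbb{CP}^2)$ and $\Omega^{\bullet}(B)$ is the restriction of $\Omega^{\bullet}_q(\mathrm{F}_3)$ to $B$. By the preceding Lemma the first-order part $\Omega^1(B)$ is the Heckenberger--Kolb fodc $\Omega^1_q(\mathbb{CP}^2)$, and since the maximal prolongation of the Heckenberger--Kolb fodc has classical dimension and coincides with the Heckenberger--Kolb dc \cite[\textsection 3.3]{HKdR}, we have $\mathrm{Max}(\Omega^1(B)) \cong \Omega^{\bullet}_q(\mathbb{CP}^2)$. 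Hence it is enough to show that $p$ is injective; since $p$ is a morphism in ${}^{A}_{B}\mathrm{Mod}_{B}$ with $A = \OO_q(\mathrm{SU}_3)$, Takeuchi's equivalence reduces this to injectivity of $\Phi(p)$.

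As $\Phi(p)$ is automatically surjective and both source and target are finite-dimensional in each degree, it suffices to match dimensions degreewise. The source has the classical Poincar\'e series $(1+t)^4$, that is $\dim \Phi(\Omega^{k}_q(\mathbb{CP}^2)) = \binom{4}{k}$, by \cite[\textsection 3.3]{HKdR}. For the target, I would run the argument of \cite[Proposition 4.8]{ROBPSLusz}, mirroring the diamond-lemma computation carried out above for $V^{\bullet}$: the restricted quantum exterior algebra is presented on the four covectors of $V^1$ that survive in the restricted cotangent space (those dual to $\{F_1, F_2F_1, E_1, E_2E_1\}$, respectively $\{F_2, F_1F_2, E_2, E_1E_2\}$), modulo the sub-family of the relations of Theorem~\ref{thm:thereal} supported on these covectors, and one verifies with Bergman's diamond lemma and the induced total order that every overlap ambiguity resolves, so that no additional relation is forced and the Poincar\'e series is again $(1+t)^4$.

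With $\dim \Phi(\Omega^{k}(B)) = \binom{4}{k} = \dim \Phi(\Omega^{k}_q(\mathbb{CP}^2))$ in every degree, the surjection $\Phi(p)$ is a degreewise linear isomorphism, so $\Phi(p)$, and therefore $p$, is an isomorphism of differential graded algebras. This identifies $\Omega^{\bullet}(B)$ with $\Omega^{\bullet}_q(\mathbb{CP}^2)$ for both copies, proving the proposition.

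The one genuinely computational point, and the expected main obstacle, is the dimension count for the restricted calculus: one must check that restricting the reduction system of Theorem~\ref{thm:thereal} to the surviving covectors introduces no new relations in higher degrees --- equivalently, that the restricted quantum exterior algebra remains Koszul of classical size. Everything else is formal, given the preceding Lemma and the known structure of the Heckenberger--Kolb calculus.
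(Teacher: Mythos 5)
Your proposal follows essentially the same route as the paper: both reduce, via the surjection $p$ and Takeuchi's equivalence, to showing $\Phi(p)$ is a surjection between spaces of the same classical dimension, quoting the known dimension of the Heckenberger--Kolb calculus for the source and the method of \cite[Proposition 4.8]{ROBPSLusz} for the target. The one point to watch in your formulation is that running the diamond lemma on the abstractly presented four-generator algebra only bounds the restricted calculus from \emph{above}, whereas injectivity of $\Phi(p)$ requires the lower bound $\dim\Phi(\Omega^{k}(B)) \geq \binom{4}{k}$; this is obtained by noting that the ordered monomials in the four surviving covectors are, up to nonzero scalars, members of the PBW basis of $V^{\bullet}$ already established in the corollary to Theorem \ref{thm:thereal}.
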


The question of how the covariant complex structures of the Heckenberger--Kolb dc relate to the possible covariant complex structures on $\Omega^1_q(\mathrm{F}_3)$ will be considered in \textsection \ref{section:complexstructures}.

\subsection{A Filtration for the Quantum Exterior Algebra}

Consider the following total order on the roots of $\frak{sl}_3$:
\begin{align}
\alpha_1 ~ \geq ~ -\alpha_1 ~ \geq ~ \alpha_2 ~ \geq ~ - \alpha_2 ~ \geq ~ \alpha_1 + \alpha_2 ~ \geq ~ - (\alpha_1 + \alpha_2). 
\end{align}
Following the approach of \cite[Appendix B]{ROBPSLusz}, this gives us a filtration on $V^{\bullet}$. We denote the  associated graded algebra by $\mathrm{gr}^{\mathscr{F}}$.

\begin{prop}
The algebra  $\mathrm{gr}^{\mathscr{F}}$ is generated by the elements $e_{\gamma}$, $f_{\gamma}$, for $\gamma \in \Delta^+$, subject to the relations, for all $\beta \leq \gamma \in \Delta^{+}$, 
\begin{align}
e_{\gamma} \wedge e_{\beta} = -q^{(\beta,\gamma)} e_{\beta} \wedge e_{\gamma}, & & f_{\gamma} \wedge f_{\beta} = -q^{(\beta,\gamma)} f_{\beta} \wedge f_{\gamma}, \\
f_{\gamma} \wedge e_{\beta} = -q^{(\beta,\gamma)} e_{\beta} \wedge f_{\gamma}.
\end{align}
\end{prop}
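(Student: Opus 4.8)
The plan is to exhibit $\mathrm{gr}^{\mathscr{F}}$ as a quotient of the algebra $A$ presented by the relations in the statement, and then to close the argument with a dimension count, following the method of \cite[Appendix B]{ROBPSLusz}. First I would recall the precise definition of the filtration $\mathscr{F}$ attached to the given total order on the roots of $\frak{sl}_3$: it is an exhaustive multiplicative filtration on $V^{\bullet}$, refining the $\mathbb{Z}_{\geq 0}$-grading, whose defining weights are arranged so that, in a monomial in the generators $e_{\gamma}, f_{\gamma}$, occurrences attached to roots appearing \emph{earlier} in the total order contribute strictly more to the filtration degree than those appearing later. Since $V^{\bullet}$ is generated in degree one and $\mathscr{F}$ refines the grading, $\mathrm{gr}^{\mathscr{F}}$ is generated by the principal symbols $\sigma(e_{\gamma}), \sigma(f_{\gamma})$, and a presentation of $\mathrm{gr}^{\mathscr{F}}$ as a quotient of $\mathcal{T}(V^{1})$ is obtained by adjoining the principal symbols of the elements of the defining ideal of $V^{\bullet}$, a generating set for which is recorded in Theorem \ref{thm:thereal}.

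The technical core is the computation of these principal symbols. Each relation of the first two families, and each of the generic mixed relations $e_{\gamma}\wedge f_{\beta} = -q^{(\beta,\gamma)} f_{\beta}\wedge e_{\gamma}$ with $\beta\neq\gamma$, relates two monomials supported on the same pair of generators; passing to the associated graded and re-expressing everything in the normal form adapted to $\mathscr{F}$ reproduces the relations in the statement. The decisive point concerns the two \emph{diagonal} relations
$$
e_{\alpha_i}\wedge f_{\alpha_i} = -q^{2}\, f_{\alpha_i}\wedge e_{\alpha_i} \;\mp\; \nu\, f_{\alpha_1+\alpha_2}\wedge e_{\alpha_1+\alpha_2}, \qquad i = 1,2 .
$$
The correction term $f_{\alpha_1+\alpha_2}\wedge e_{\alpha_1+\alpha_2}$ is supported on the roots $\pm(\alpha_1+\alpha_2)$, which are the two \emph{lowest} roots in the chosen total order, whereas its companion $f_{\alpha_i}\wedge e_{\alpha_i}$ is supported on $\pm\alpha_i$; hence the correction lies in strictly smaller $\mathscr{F}$-degree and is annihilated in $\mathrm{gr}^{\mathscr{F}}$. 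This is exactly why the total order was arranged so that $\pm(\alpha_1+\alpha_2)$ come last. We conclude that $\mathrm{gr}^{\mathscr{F}}$ is a quotient of the algebra $A$ presented in the statement.

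To finish I would bound $\dim A$ from above. The relations defining $A$ are all of ``$q$-anticommuting'' type: they allow any monomial in the $e_{\gamma}, f_{\gamma}$ to be rewritten, up to a nonzero scalar, as a normal-ordered monomial $e_{\gamma_1}\wedge\cdots\wedge e_{\gamma_a}\wedge f_{\delta_1}\wedge\cdots\wedge f_{\delta_b}$, and the diagonal cases $\beta=\gamma$ force $e_{\gamma}\wedge e_{\gamma}=f_{\gamma}\wedge f_{\gamma}=0$ (using $q^{(\gamma,\gamma)}=q^{2}\neq -1$). Hence $\dim A \leq 2^{|\Delta^{+}|}\cdot 2^{|\Delta^{+}|} = 2^{|\Delta|}$. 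On the other hand, $\mathscr{F}$ being exhaustive and $V^{\bullet}$ finite-dimensional gives $\dim \mathrm{gr}^{\mathscr{F}} = \dim V^{\bullet} = 2^{|\Delta|}$ by the preceding corollary. A surjection between vector spaces of the same finite dimension is an isomorphism, so $A \cong \mathrm{gr}^{\mathscr{F}}$, which is the desired presentation.

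I expect the main obstacle to be the bookkeeping in the second paragraph: with the explicit weights defining $\mathscr{F}$ in hand one must check not only that the two $\nu$-corrections drop out, but also that no other relation degenerates further (no leading coefficient vanishing) and that the $q$-exponents come out uniformly as in the statement when the relations are recorded in the $\mathscr{F}$-adapted normal form. A clean way to carry this out, paralleling the proof of the preceding corollary, is to run Bergman's diamond lemma directly for the reduction system underlying the proposed relations: its overlap ambiguities are precisely those examined there but with the $\nu$-terms removed, so they resolve even more readily, and this simultaneously confirms the presentation and produces the PBW basis, making the dimension estimate above fully explicit.
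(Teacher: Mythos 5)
Your proposal is correct and follows essentially the same route as the paper's (much terser) proof: one observes that the $\nu$-correction terms in the two diagonal relations of Theorem \ref{thm:thereal} are supported on $\pm(\alpha_1+\alpha_2)$, the lowest roots in the chosen total order, and hence vanish upon passing to $\mathrm{gr}^{\mathscr{F}}$, while the remaining relations are filtration-homogeneous; the presentation is then confirmed by comparing the $2^{|\Delta|}$ spanning bound forced by the $q$-anticommutation relations with $\dim \mathrm{gr}^{\mathscr{F}} = \dim V^{\bullet} = 2^{|\Delta|}$. Your additional care about the surjection from the algebra presented by the leading symbols onto $\mathrm{gr}^{\mathscr{F}}$, and the optional diamond-lemma verification, only make explicit what the paper leaves implicit.
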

\begin{proof}
It is clear from the relation set for $V^{\bullet}$ given above that these relations hold in the associated graded algebra. Moreover, since these relations imply an obvious spanning of dimension $2^6$, we see that they must form a complete set of relations.    
\end{proof}

Following the argument of \cite[Proposition 3.18]{ROBPSLusz}, we can now prove the following corollary.

\begin{cor}
The algebra $V^{\bullet}$ is a Frobenius algebra.
\end{cor}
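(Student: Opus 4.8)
The plan is to deduce the Frobenius property of $V^{\bullet}$ from that of the associated graded algebra $\mathrm{gr}^{\mathscr{F}}$, transferring along the filtration $\mathscr{F}$ exactly as in \cite[Proposition 3.18]{ROBPSLusz}. Recall that a finite-dimensional $\mathbb{Z}_{\geq 0}$-graded algebra $C = \bigoplus_{k=0}^{N} C^{k}$ with $C^{0} = \mathbb{C}1$ and $\dim_{\mathbb{C}} C^{N} = 1$ is Frobenius precisely when the multiplication pairing $C^{k} \times C^{N-k} \to C^{N} \cong \mathbb{C}$ is non-degenerate for every $k$, a Frobenius functional then being given by the projection onto $C^{N}$. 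By the Corollary above, $V^{\bullet}$ is such an algebra, with $N = |\Delta| = 6$ and $\dim V^{6} = \binom{6}{6} = 1$, so it suffices to verify this non-degeneracy.

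First I would observe that $\mathrm{gr}^{\mathscr{F}}$ is Frobenius. By the preceding proposition it is a $q$-skew exterior algebra on the six generators $e_{\gamma}, f_{\gamma}$ for $\gamma \in \Delta^{+}$; since $(\gamma,\gamma) = 2$ and $q$ is not a root of unity, the quadratic relations force $e_{\gamma}^{2} = f_{\gamma}^{2} = 0$, so a basis is provided by the ordered square-free monomials, indexed by the subsets $S$ of the generating set. In particular $\mathrm{gr}^{\mathscr{F},6}$ is one-dimensional, spanned by the full product $m_{\mathrm{top}}$, and for $|S| = k$ the product $m_{S} \wedge m_{S'}$ with $|S'| = 6-k$ is a nonzero scalar multiple of $m_{\mathrm{top}}$ when $S'$ is the complement of $S$ and is zero otherwise (any shared generator produces a square). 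Hence, in the monomial bases, the pairing matrix of $\mathrm{gr}^{\mathscr{F},k} \times \mathrm{gr}^{\mathscr{F},6-k} \to \mathrm{gr}^{\mathscr{F},6}$ is a generalised permutation matrix with nonzero entries, so it is invertible and $\mathrm{gr}^{\mathscr{F}}$ is Frobenius.

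Finally I would transfer this back to $V^{\bullet}$. Since $\mathscr{F}$ is an exhaustive, separated algebra filtration respecting the $\mathbb{Z}_{\geq 0}$-grading and $\dim V^{\bullet} = 2^{6} = \dim \mathrm{gr}^{\mathscr{F}}$, the canonical map identifies $\mathrm{gr}^{\mathscr{F}}(V^{\bullet})$ with $\mathrm{gr}^{\mathscr{F}}$ as graded algebras, and one applies the standard fact that a finite-dimensional filtered algebra whose associated graded is Frobenius is itself Frobenius: one lifts a cyclic generator of the left module $(\mathrm{gr}^{\mathscr{F}})^{*} \cong \mathrm{gr}^{\mathscr{F}}(V^{\bullet,*})$ to a functional $\lambda \in V^{\bullet,*}$, checks by a filtered Nakayama argument that $\lambda$ generates $V^{\bullet,*}$ as a left $V^{\bullet}$-module, and concludes that the induced surjection $V^{\bullet} \twoheadrightarrow V^{\bullet,*}$ is an isomorphism for dimension reasons. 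Concretely, this amounts to noting that, in filtration-adapted bases, the pairing matrix $V^{k} \times V^{6-k} \to V^{6}$ coincides with its $\mathrm{gr}^{\mathscr{F}}$-counterpart up to strictly filtration-lower corrections, hence remains invertible. The routine part is the combinatorics of the $q$-skew exterior algebra; the only step requiring genuine care is this last transfer — in particular, that the correction terms really are of strictly lower filtration degree — but it proceeds verbatim as in \cite[Proposition 3.18]{ROBPSLusz}.
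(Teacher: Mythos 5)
Your proposal is correct and takes essentially the same route as the paper: the paper likewise deduces the Frobenius property from the associated graded algebra $\mathrm{gr}^{\mathscr{F}}$ by the filtration-transfer argument of \cite[Proposition 3.18]{ROBPSLusz}, which is precisely the non-degenerate top-degree pairing argument you spell out.
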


\begin{remark}
We note that since the space $V^{2^6}$ is a trivial $U_q(\frak{h})$-module $V^{\bullet}$ is a Frobenius algebra object in the category of $U_q(\frak{h})$-modules. This is in contrast to $V^{(0,\bullet)}$ which is a Frobenius algebra, but not in the category of $U_q(\frak{h})$-modules, nor are the anti-holomorphic subcomplexes of the Heckenberger--Kolb calculi.
\end{remark}

For a general Frobenius algebra $A$,  there exists an algebra automorphism $\sigma:A \to A$ of $A$, uniquely defined by the identity $B(x,y) = B(y,\sigma(x))$, for all $x,y \in A$. We see that the bilinear form $B$ of a Frobenius algebra is symmetric if and only if $\sigma = \id$.  With a view to describing the Nakayama automorphism of our quantum exterior algebra, we first describe the Nakayama automorphism of $\mathrm{gr}^{\mathscr{F}}$.

\begin{prop} 
The Nakayama automorphism of $\mathrm{gr}^{\mathscr{F}}$ is determined by 
\begin{align} \label{eqn:Nakayama}
\sigma(e_{\alpha_1}) = -q^{2} e_{\alpha_1}, & & \sigma(e_{\alpha_2}) = -q^{2} e_{\alpha_2}, & & \sigma(e_{\alpha_1 + \alpha_2}) = -q^{4} e_{e_{\alpha_1 + \alpha_2}}, \\
\sigma(f_{\alpha_1}) = -q^{-2} f_{\alpha_1}, & & \sigma(f_{\alpha_2}) = -q^{-2} f_{\alpha_2}, & & \sigma(f_{\alpha_1 + \alpha_2}) = -q^{-4} f_{\alpha_1 + \alpha_2}. \label{eqn:Nakayama2}
\end{align}
\end{prop}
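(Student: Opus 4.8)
The plan is to follow the strategy used for the anti-holomorphic subcomplex in \cite{ROBPSLusz}. First note that $\mathrm{gr}^{\mathscr{F}}$ is a $\mathbb{Z}_{\geq 0}$-graded algebra whose top-degree component $(\mathrm{gr}^{\mathscr{F}})^{6}$ is one-dimensional, spanned by a fixed ordered product $\omega$ of all six generators $e_{\gamma},f_{\gamma}$, $\gamma\in\Delta^{+}$. Normalising $\lambda(\omega)=1$, the Frobenius functional $\lambda$ may be taken to be the projection of $\mathrm{gr}^{\mathscr{F}}$ onto this top line, and the associated bilinear form is $B(x,y)=\lambda(x\wedge y)$. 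Being the Nakayama automorphism of a graded Frobenius algebra with one-dimensional top degree, $\sigma$ is graded, so it restricts to a linear endomorphism of $V^{1}=(\mathrm{gr}^{\mathscr{F}})^{1}$, and since $V^{1}$ generates $\mathrm{gr}^{\mathscr{F}}$ it suffices to compute $\sigma$ there.

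For a generator $g$, write $\widehat{g}$ for the degree-five monomial obtained from $\omega$ by deleting the letter $g$; as $h$ ranges over the six generators these form a basis of $(\mathrm{gr}^{\mathscr{F}})^{5}$. Because every generator squares to zero in $\mathrm{gr}^{\mathscr{F}}$, both $\lambda(g\wedge\widehat{h})$ and $\lambda(\widehat{h}\wedge g)$ vanish unless $h=g$. Feeding this into the defining identity $B(g,\widehat{h})=B(\widehat{h},\sigma(g))$, with $\widehat{h}$ running over this basis, shows at once that $\sigma(g)$ is a scalar multiple $\mu_g\, g$, where
$$
\mu_g=\frac{\lambda(g\wedge\widehat{g})}{\lambda(\widehat{g}\wedge g)}.
$$
(Alternatively, the diagonal form of $\sigma$ also follows from the fact that, the filtration being by $U_q(\frak{h})$-submodules, $\sigma$ is $U_q(\frak{h})$-equivariant, while the $e_{\gamma},f_{\gamma}$ exhaust the one-dimensional weight spaces of $V^{1}$.)

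It then remains to evaluate the scalars $\mu_g$. Here $\lambda(g\wedge\widehat{g})$ and $\lambda(\widehat{g}\wedge g)$ are computed by commuting $g$ from one end of $\widehat{g}$ to its sorted position in $\omega$ using the $q$-commutation relations of $\mathrm{gr}^{\mathscr{F}}$, so that $\mu_g$ becomes a product of the pairwise commutation constants, one for each of the other five generators. Substituting the inner products $(\alpha_i,\alpha_i)=2$, $(\alpha_1,\alpha_2)=-1$, $(\alpha_i,\alpha_1+\alpha_2)=1$ and $(\alpha_1+\alpha_2,\alpha_1+\alpha_2)=2$, this product telescopes to
$$
\mu_{e_{\gamma}}=-q^{(\gamma,2\rho)}, \qquad \mu_{f_{\gamma}}=-q^{-(\gamma,2\rho)},
$$
where $2\rho:=\sum_{\delta\in\Delta^{+}}\delta=2\alpha_1+2\alpha_2$ is the sum of the positive roots; this is precisely \eqref{eqn:Nakayama}--\eqref{eqn:Nakayama2}. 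Since $\sigma$ was obtained as the Nakayama automorphism it is automatically multiplicative, so no further compatibility with the relations of $\mathrm{gr}^{\mathscr{F}}$ needs to be checked.

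The only genuine difficulty is the bookkeeping in the last step: fixing the total order on the six generators, carefully tracking the signs produced by graded anticommutativity, and keeping straight whether a given commutation constant or its inverse enters the telescoping product, so that the large product collapses to the clean form $-q^{\pm(\gamma,2\rho)}$.
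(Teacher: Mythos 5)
Your proof is correct and follows essentially the same route as the paper: both evaluate the defining identity $B(x,y)=B(y,\sigma(x))$ on a generator paired against the complementary degree-five monomial and compute the resulting scalar by $q$-commuting that generator past the other five basis elements of $V^1$. The only additions are your explicit justification that $\sigma$ is diagonal on the generators (which the paper leaves implicit) and the packaging of the eigenvalues as $-q^{\pm(\gamma,2\rho)}$, both of which check out against the stated values.
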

\begin{proof}
In the following we let $\iota: \Lambda^n \rightarrow \Lambda^0$ denote the isomorphism of $\pi_B(A)-$comodules uniquely defined by 
\begin{align*}
    \iota(e_{\alpha_1} \wedge e_{\alpha_2} \wedge e_{\alpha_1+\alpha_2} \wedge f_{\alpha_1}\wedge f_{\alpha_2} \wedge f_{\alpha_1+\alpha_2})=1.
\end{align*}
Since $\sigma$ is an algebra map, it is clearly determined by its action of the algebra generators. For the generator $f_{\alpha_1}$ we have
\begin{align*}
(e_{\alpha_1} \wedge e_{\alpha_2} \wedge e_{\alpha_1+\alpha_2} \wedge f_{\alpha_2} \wedge f_{\alpha_1+\alpha_2}, f_{\alpha_1}) = & \, \iota[e_{\alpha_1} \wedge e_{\alpha_2} \wedge e_{\alpha_1+\alpha_2} \wedge f_{\alpha_2} \wedge f_{\alpha_1+\alpha_2} \wedge f_{\alpha_1}] \\
=& \, \iota[e_{\alpha_1} \wedge e_{\alpha_2} \wedge e_{\alpha_1+\alpha_2} \wedge f_{\alpha_1}\wedge f_{\alpha_2} \wedge f_{\alpha_1+\alpha_2} ] \\
=& \, 1.
\end{align*}
On the other hand, it holds that
\begin{align*}
(f_{\alpha_1}, e_{\alpha_1} \wedge e_{\alpha_2} \wedge e_{\alpha_1 + \alpha_2}\wedge f_{\alpha_2} \wedge f_{\alpha_1 + \alpha_2} ) =& \, \iota[f_{\alpha_1}\wedge e_{\alpha_1} \wedge e_{\alpha_2} \wedge e_{\alpha_1 + \alpha_2}\wedge f_{\alpha_2} \wedge f_{\alpha_1 + \alpha_2}]\\
=& \, -q^{-2} \iota[e_{\alpha_1} \wedge e_{\alpha_2} \wedge e_{\alpha_1+\alpha_2} \wedge f_{\alpha_1}\wedge f_{\alpha_2} \wedge f_{\alpha_1+\alpha_2} ] \\
=& \, -q^{-2}.
\end{align*}
Thus we see that $\sigma(f_{\alpha_1}) = q^{-2} f_{\alpha_1}$. The action of $\sigma$ on the other generators is calculated similarly. \end{proof}

In the following corollary we use our description of the Nakayama automorphism of $\mathrm{gr}^{\mathscr{F}}$ to produce an analogous description of the Nakayama automorphism of $\Lambda^{(0,\bullet)}_q$. The proof is completely analogous to the proof of \cite[Corollary 3.21]{ROBPSLusz}.

\begin{cor}  \label{cor:Nakayama}
The Nakayma automorphism of $\Lambda^{(0,\bullet)}_q$ acts on the generators $e_{\gamma}$, and $f_{\gamma}$, for $\gamma \in \Delta$, just as in \eqref{eqn:Nakayama} and \eqref{eqn:Nakayama2}.
\end{cor}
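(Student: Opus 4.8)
The plan is to follow the strategy of \cite[Corollary 3.21]{ROBPSLusz}. We have already established (in the preceding corollary) that $V^\bullet$ is a $\mathbb{Z}_{\geq 0}$-graded Frobenius algebra whose top component $V^6$ is one-dimensional; its Frobenius functional $\lambda \colon V^\bullet \to \mathbb{C}$ is the projection onto $V^6$ followed by a fixed identification $V^6 \cong \mathbb{C}$, and the associated bilinear form is $B(x,y) = \lambda(x \wedge y)$. Since $B$ restricts to a perfect pairing $V^k \times V^{6-k} \to \mathbb{C}$, the defining relation $B(x,y) = B(y,\sigma(x))$ forces $\sigma$ to preserve the $\mathbb{Z}_{\geq 0}$-grading; as $\sigma$ is an algebra map, it is therefore determined by its restriction to the degree-one part $V^1 = \mathrm{span}\{e_\gamma, f_\gamma \mid \gamma \in \Delta^+\}$.

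The next step is to show that $\sigma$ acts diagonally on this spanning set. Fix a generator $g$ and write $\sigma(g) = \sum_h c_h\, h$, the sum over the six generators. For a generator $h$, let $x_h \in V^5$ denote the wedge, in the standard order, of the five generators other than $h$. The key observation is that, for any generator $k$, the degree-six element $x_h \wedge k$ is nonzero precisely when $k = h$: otherwise the word $x_h \wedge k$ repeats a generator, and a degree-six monomial containing a repeated generator vanishes (for instance $e_{\alpha_1+\alpha_2} \wedge e_{\alpha_1+\alpha_2} = 0$ because $1 + q^{(\alpha_1+\alpha_2,\alpha_1+\alpha_2)} \neq 0$, and likewise for the other generators). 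Substituting $y = x_h$ into $B(g,y) = B(y,\sigma(g))$, the right-hand side collapses to $c_h\, \lambda(x_h \wedge h)$, while the left-hand side $\lambda(g \wedge x_h)$ vanishes unless $h = g$ (when $h \neq g$ the word $g \wedge x_h$ repeats $g$). Hence $c_h = 0$ for $h \neq g$, so $\sigma(g) = c_g\, g$ with $c_g = \lambda(g \wedge x_g)/\lambda(x_g \wedge g)$.

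It then remains to evaluate the scalars $c_g$, which amounts to reordering the degree-six words $g \wedge x_g$ and $x_g \wedge g$ into the standard top monomial by means of the relations of Theorem \ref{thm:thereal}, and reading off the accumulated $q$-powers and signs. This is exactly the computation carried out in the proof of the preceding proposition for $\mathrm{gr}^{\mathscr{F}}$, and the anticipated --- and only --- new point is the role of the non-diagonal terms in the relations: whenever one of the two relations of the third set of Theorem \ref{thm:thereal} is invoked, its correction term $\mp\, \nu\, f_{\alpha_1+\alpha_2} \wedge e_{\alpha_1+\alpha_2}$ produces a degree-six word in which both $e_{\alpha_1+\alpha_2}$ and $f_{\alpha_1+\alpha_2}$ occur twice, and so it contributes nothing to $V^6$; the same happens for the off-diagonal pieces underlying the first two sets of relations. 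Consequently only the diagonal $q$-commutation factors survive, these being precisely the ones governing $\mathrm{gr}^{\mathscr{F}}$, and one recovers the eigenvalues recorded in \eqref{eqn:Nakayama} and \eqref{eqn:Nakayama2}. I expect this vanishing of the correction terms, rather than any individual bookkeeping step, to be the only point requiring care; with it in hand the proof is a transcription of the $\mathrm{gr}^{\mathscr{F}}$ computation, which is why it is reasonable to describe it as completely analogous to \cite[Corollary 3.21]{ROBPSLusz}.
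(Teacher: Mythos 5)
Your proposal follows the same route that the paper intends (it simply defers to the argument of \cite[Corollary 3.21]{ROBPSLusz}): use the perfect pairing into the one-dimensional top component to reduce to the degree-one generators, show $\sigma$ is diagonal, and observe that the non-diagonal correction terms in the relations of Theorem \ref{thm:thereal} die in top degree, so the eigenvalues coincide with those of $\mathrm{gr}^{\mathscr{F}}$. The skeleton and the conclusion are correct. The one step you should not pass over is the blanket claim that \emph{every} degree-six word with a repeated generator vanishes. Your justification ($g\wedge g=0$ because $1+q^{(\gamma,\gamma)}\neq 0$) only covers adjacent repeats; in the words you actually need ($x_h\wedge k$, and the correction words) the two copies of the repeated generator are separated, and bringing them together may invoke the third set of relations, whose correction terms are themselves degree-six words with repeated generators --- so, as written, the argument is circular at exactly the point you flag as the one requiring care.

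Both occurrences can be closed cheaply. For the diagonality of $\sigma$ you do not need the repeated-generator lemma at all: the top component is a trivial $U_q(\frak{h})$-module (as the paper remarks), the six generators have pairwise distinct weights $\pm\alpha_1,\pm\alpha_2,\pm(\alpha_1+\alpha_2)$, and $x_h\wedge k$ has weight $\mathrm{wt}(k)-\mathrm{wt}(h)\neq 0$ for $k\neq h$, hence vanishes; equivalently, $B$ is $U_q(\frak{h})$-invariant, so $\sigma$ is a $U_q(\frak{h})$-module map and is automatically diagonal on a weight basis with distinct weights. For the correction terms arising when reordering $g\wedge x_g$ and $x_g\wedge g$, these have weight zero, so one must check them by hand: the inserted factor $f_{\alpha_1+\alpha_2}\wedge e_{\alpha_1+\alpha_2}$ duplicates generators that are separated in the resulting word only by letters with which they $q$-commute \emph{diagonally} (the $e$--$e$ and $f$--$f$ relations, and the mixed relations with $\beta\neq\gamma$ or $\beta=\gamma=\alpha_1+\alpha_2$), so the duplicates can be made adjacent without spawning further correction terms and the word is honestly zero. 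Alternatively, run the entire computation in $\mathrm{gr}^{\mathscr{F}}$ and lift along the filtration, which is precisely what the preceding subsection sets up. With either repair your proof is complete and matches the paper's intended argument.
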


We finish by observing that  $\Lambda^{(0,\bullet)}_q$ is also a Koszul algebra. Recall that a Koszul algebra is a $\mathbb{Z}_{\geq 0}$-graded algebra admitting  a linear minimal graded free resolution. We refer the reader to the standard text \cite{Leonid.Quadratic} for more details on Koszul algebras.

\begin{prop}
The algebra $\Lambda^{(0,\bullet)}_q$ is a Koszul algebra.
\end{prop}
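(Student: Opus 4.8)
The plan is to exhibit $\Lambda^{(0,\bullet)}_q$ as a quadratic algebra and then verify the Koszul property by reducing to the associated graded algebra $\mathrm{gr}^{\mathscr{F}}$, which has much simpler (essentially $q$-commutative) relations. First I would note that, by Theorem \ref{thm:THERELATIONS} and its corollary, the anti-holomorphic subalgebra $\Lambda^{(0,\bullet)}_q = \mathcal{T}(V^{(0,1)})/\langle R \rangle$ is generated in degree one with relations $R$ spanned by the quadratic elements $e_{\gamma} \otimes e_{\beta} + q^{(\beta,\gamma)} e_{\beta} \otimes e_{\gamma}$ for $\beta \le \gamma \in \Delta^+$; in particular it is a quadratic algebra, so Koszulity is at least a meaningful question.

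The main tool will be the standard fact (see e.g.\ \cite{Leonid.Quadratic}) that if a filtered algebra has associated graded algebra that is Koszul, then the algebra itself is Koszul; equivalently, a quadratic algebra admitting a PBW-type basis (in the sense that the diamond-lemma reduction system built from its relations is confluent) is automatically Koszul. The filtration $\mathscr{F}$ from \textsection 3.6 already does exactly this bookkeeping: the associated graded $\mathrm{gr}^{\mathscr{F}}$ of the \emph{full} quantum exterior algebra was identified in the proposition above as the $q$-deformed exterior algebra with relations $e_\gamma \wedge e_\beta = -q^{(\beta,\gamma)} e_\beta \wedge e_\gamma$, $f_\gamma \wedge f_\beta = -q^{(\beta,\gamma)} f_\beta \wedge f_\gamma$, $f_\gamma \wedge e_\beta = -q^{(\beta,\gamma)} e_\beta \wedge f_\gamma$. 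Restricting the filtration to the anti-holomorphic part, the associated graded of $\Lambda^{(0,\bullet)}_q$ is the subalgebra generated by the $e_\gamma$, which is precisely a skew-polynomial (quantum affine space) exterior algebra: the $q$-deformed Koszul complex / quantum Grassmann algebra on three generators with the above $q$-commutation relations. Such a "quantum exterior algebra" $\Lambda_q(V)$ with relations $x_i x_j = -c_{ij} x_j x_i$ (and $x_i^2 = 0$) is well known to be Koszul — its Koszul dual is the corresponding quantum symmetric (polynomial) algebra, and the diamond lemma gives a PBW basis $\{e_{\gamma_1} \wedge \cdots \wedge e_{\gamma_a} : \gamma_1 < \cdots < \gamma_a\}$, confirming that the Hilbert series is $(1+t)^{|\Delta^+|}$ as a polynomial, matching the classical dimension.

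So the key steps, in order, are: (1) record that $\Lambda^{(0,\bullet)}_q$ is quadratic with the relations above; (2) transport the filtration $\mathscr{F}$ of \textsection 3.6 to its anti-holomorphic subalgebra and identify $\mathrm{gr}^{\mathscr{F}}(\Lambda^{(0,\bullet)}_q)$ as the $q$-exterior algebra on $e_{\alpha_1}, e_{\alpha_2}, e_{\alpha_1+\alpha_2}$ with the purely $q$-skew relations — the dimension count $\binom{3}{k}$ in the corollary guarantees the filtration does not collapse, so $\mathrm{gr}^{\mathscr{F}}$ has the expected size and the identification is exact; (3) observe that this $q$-exterior algebra is Koszul, either by quoting the known Koszulity of quantum exterior/Grassmann algebras (it is a Poincaré–Birkhoff–Witt deformation of the classical $\Lambda^\bullet(\mathbb{C}^3)$ and admits a confluent reduction system, so by the standard criterion in \cite{Leonid.Quadratic} it is Koszul), or by checking directly that its quadratic dual is the $q$-polynomial algebra and that the Koszul complex is exact; (4) conclude by the "Koszulity lifts along filtrations" principle that $\Lambda^{(0,\bullet)}_q$ is itself Koszul.

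The step I expect to be the genuine content — as opposed to bookkeeping — is step (2): making sure that the filtration inherited from \textsection 3.6 behaves well when restricted to the anti-holomorphic generators, i.e.\ that the leading terms of the anti-holomorphic relations under $\mathscr{F}$ are exactly the $q$-skew relations with no surviving lower-order correction, and that $\mathrm{gr}^{\mathscr{F}}(\Lambda^{(0,\bullet)}_q)$ is the full $q$-exterior algebra rather than a proper quotient. Since the anti-holomorphic relations in Theorem \ref{thm:THERELATIONS} are already of the pure form $e_\gamma \wedge e_\beta = -q^{(\beta,\gamma)} e_\beta \wedge e_\gamma$ (there are no $\nu$-corrections among the $e$–$e$ relations), the leading-term computation is immediate and no cross-terms appear; combined with the known basis $\{e_{\gamma_1}\wedge\cdots\wedge e_{\gamma_a}\}$ this pins down $\mathrm{gr}^{\mathscr{F}}$ on the nose. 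Everything else is either a citation to \cite{Leonid.Quadratic} or a routine diamond-lemma verification entirely parallel to the one already carried out in the proof of the basis corollary above, so I would state steps (3)–(4) briefly and refer to the standard theory rather than reproving it.
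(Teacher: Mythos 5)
Your argument is correct and is essentially the paper's: the paper simply observes that the algebra is a PBW-algebra in the sense of Priddy (the confluent reduction system having already been exhibited via the diamond lemma) and invokes Priddy's theorem from Polishchuk--Positselski, which is exactly the criterion you cite in your step (3)--(4); the detour through $\mathrm{gr}^{\mathscr{F}}$ is an equivalent repackaging of the same PBW verification. One caveat on scope: despite the notation, the proposition (read against the introduction and the immediately preceding corollary, whose Nakayama automorphism acts on both the $e_\gamma$ and the $f_\gamma$) concerns the \emph{full} quantum exterior algebra $V^{\bullet}$ on all six generators, not just the anti-holomorphic subalgebra generated by the $e_\gamma$. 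Your argument transfers verbatim, since the associated graded of the full algebra was already identified as the purely $q$-skew exterior algebra on all six generators and the full reduction system (including the two relations with $\nu$-corrections, which contribute only lower-order terms for the filtration) was shown to be confluent; you should just run steps (2)--(4) for $V^{\bullet}$ rather than for its $e$-generated subalgebra.
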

\begin{proof}
The algebra $\Lambda^{(0,\bullet)}_q$ is clearly a PBW-algebra in the sense of Priddy \cite[\textsection 4.1]{Leonid.Quadratic}. Thus it follows from  Priddy's theorem \cite[Theorem 3.1]{Leonid.Quadratic} that it is a Koszul algebra.
\end{proof}


\section{Connections and Torsion}

In this section we make some general observations about torsion for connections, and prove the existence of a covariant torsion-free connection for a dc over a quantum homogeneous space with cosemisimple quantum isotropy subgroup. Moreover, we classify covariant torsion free connections for such spaces. These general results are then applied to the dc $\Omega^{\bullet}_q(\mathrm{F}_3)$. One major difference with the irreducible quantum flag manifold case is that $\Omega^{\bullet}_q(\mathrm{F}_3)$ admits covariant connections with torsion.

Let us first recall the definition of a connection and its associated torsion operator. Let $\Omega^{\bullet}(B)$ be a differential calculus over an algebra $B$ and $\mathcal{F}$ a left $B$-module, a \emph{connection} on $\mathcal{F}$ is a $\mathbb{C}$-linear map
$
\nabla: \mathcal{F} \to \Omega^1(B) \otimes_B \mathcal{F}
$
satisfying the identity 
\begin{align*}
\nabla(bf) = \exd b \otimes f + b\nabla f, & & \textrm{ for all } b \in B, \, f \in \mathcal{F}.
\end{align*}
An immediate but important consequence of the definition is that the difference of two connections $\nabla - \nabla'$ is a left $B$-module map. 

Let $\nabla: \Omega^1(B) \to \Omega^1(B) \otimes_B \Omega^1(B)$ be a connection for $\Omega^1(B)$. The \emph{torsion} of $\nabla$ is the left $B$-module map 
\begin{align*}
T_{\nabla}:= \wedge \circ \nabla - \exd:\Omega^1(B) \to \Omega^2(B). 
\end{align*}
We note that if $B \subseteq A$ is a quantum homogeneous space, $\Omega^1(B)$ is a left $A$-covariant dc over $B$ in the category ${}^A_B\mathrm{Mod}$, and $\nabla$ is a left $A$-comodule map, then $T_{\nabla}$ is a morphism in ${}^A_B\mathrm{Mod}$.

We note that since the difference of two left $A$-covariant connections is a morphism of relative Hopf modules, the set of left $A$-covariant connections is an affine space for the vector space of morphisms from $\Omega^1(B)$ to $\Omega^1(B) \otimes_B \Omega^1(B)$. Moreover, for any two torsion free connections $\nabla$ and $\nabla'$, it holds that 
\begin{align*}
(\nabla - \nabla')(\omega) \in \ker(\wedge), & & \textrm{ for } \omega \in \Omega^1(A).
\end{align*}
This implies that the set of left $A$-covariant connections is an affine space for the vector space of morphisms from $\Omega^1(B)$ to the kernel
$$
\mathrm{ker}\Big(\wedge:\Omega^1(B) \otimes_B \Omega^1(B) \to \Omega^1(B)\Big).
$$

Moreover, we recall from \cite[\textsection 4.5]{HVBQFM} that if the space of $\pi_B(A)$-comodule maps from $V^{(0,1)}$ to $V^{(0,2)}$ is trivial, then this connection is necessarily torsion free. The following proposition is a variation on this result for the cosemisimple case.

\begin{prop}
Let $B \subseteq A$ be a quantum homogeneous space such that $\pi_B(A)$ is a cosemisimple Hopf algebra, and $A$ has no zero divisors. If $\Omega^{\bullet}(B)$ is a left $A$-covariant $*$-dc over $B$, then $\Omega^1(B)$ admits a left $A$-covariant torsion-free connection. 
\end{prop}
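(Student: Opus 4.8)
The plan is to construct an explicit left $A$-covariant connection $\nabla_0$ on $\Omega^1(B)$ and then correct it to be torsion-free using cosemisimplicity of $\pi_B(A)$. First I would recall that, because we have a framing calculus $\Omega^1(A) \cong A \otimes \Lambda^1$ and the cotangent space $V^1$ embeds as a right $\pi_B(A)$-submodule of $\Lambda^1$, the existence of a covariant principal connection (equivalently, a $\pi_B(A)$-comodule projection $\Lambda^1 \to \Lambda^1$ with kernel $V^1$, which exists here by cosemisimplicity of $\pi_B(A)$) yields a canonical left $A$-covariant connection $\nabla_0 : \Omega^1(B) \to \Omega^1(B) \otimes_B \Omega^1(B)$. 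Concretely, under Takeuchi's equivalence $\Phi$ this strong connection associates to the principal connection a covariant left connection on the associated bundle $\Omega^1(B) = A \square_{\pi_B} V^1$; this is standard (see the argument of \cite[Proposition 5.8]{CDOBBW} already invoked in the excerpt for the existence of the principal connection).

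Next I would analyse the torsion $T_{\nabla_0} = \wedge \circ \nabla_0 - \exd : \Omega^1(B) \to \Omega^2(B)$, which by covariance is a morphism in ${}^A_B\mathrm{Mod}$. Applying $\Phi$, it becomes a $\pi_B(A)$-comodule morphism $\tau : V^1 \to V^2$. Since any other covariant connection differs from $\nabla_0$ by a morphism $\phi : \Omega^1(B) \to \Omega^1(B)\otimes_B \Omega^1(B)$ in ${}^A_B\mathrm{Mod}$, i.e. by a $\pi_B(A)$-comodule map $\Phi(\phi) : V^1 \to V^1 \otimes V^1 =: V^{1,1}$, the torsion of $\nabla_0 + \phi$ has image $\tau + \wedge\circ\Phi(\phi)$ under $\Phi$. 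So it suffices to show that $\tau$ lies in the image of the wedge map $\wedge_* : \mathrm{Hom}_{\pi_B(A)}(V^1, V^{1,1}) \to \mathrm{Hom}_{\pi_B(A)}(V^1, V^2)$. Because $\pi_B(A)$ is cosemisimple, the category of $\pi_B(A)$-comodules is semisimple, so the surjection $V^{1,1} \twoheadrightarrow V^2$ splits as a $\pi_B(A)$-comodule map, say by $s : V^2 \to V^{1,1}$; then $\Phi(\phi) := -s \circ \tau$ is a $\pi_B(A)$-comodule map with $\wedge \circ \Phi(\phi) = -\tau$. Pulling $\phi$ back through $\Phi^{-1} = \Psi$ gives a covariant left $B$-module map $\phi : \Omega^1(B) \to \Omega^1(B)\otimes_B\Omega^1(B)$, and $\nabla := \nabla_0 + \phi$ is the desired left $A$-covariant torsion-free connection.

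The role of the $*$-structure and the no-zero-divisors hypothesis is a subtle point I would make sure to pin down. These are presumably needed to guarantee that $\Omega^1(B)$ is genuinely a bimodule/relative Hopf module of the expected shape and, more importantly, that the differential $\exd$ — hence $T_{\nabla_0}$ — is nonzero/well-behaved, and that $V^2 = \Phi(\Omega^2(B))$ is computed correctly (the maximal prolongation does not collapse); the absence of zero divisors in $A$ ensures the relevant modules are torsion-free so that, as in the commented-out paragraph in the excerpt, covariant connections actually exist in the first place. In the write-up I would invoke \cite[\textsection 4.5]{HVBQFM} for the underlying mechanism and emphasise that the only new ingredient here is replacing the ``trivial $\mathrm{Hom}$'' hypothesis there by semisimplicity of the comodule category, which makes the surjection $V^{1,1} \to V^2$ split.

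The main obstacle I anticipate is not the semisimplicity argument itself — that is essentially formal once set up — but rather correctly identifying $\Phi$ of the data: verifying that $\Phi(\Omega^1(B)\otimes_B\Omega^1(B)) \cong V^1 \otimes V^1$ (this uses that $\Omega^1(B)$ is relatively projective / that Takeuchi's functor is monoidal on the relevant subcategory, which the paper flags as \emph{failing} in general for full quantum flag manifolds — so care is needed, and one may have to argue directly with $A \square_{\pi_B}(-)$ rather than citing monoidality), and checking that the canonical connection $\nabla_0$ built from the zero principal connection is genuinely covariant and that its torsion descends to a $\pi_B(A)$-comodule map. Getting these compatibilities right, rather than the existence argument per se, is where the real work lies.
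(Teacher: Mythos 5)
Your key step --- correcting the torsion by splitting the wedge surjection using cosemisimplicity of $\pi_B(A)$ and subtracting $i\circ T_{\nabla_0}$ from the connection --- is exactly the paper's argument, and your worry about monoidality is resolvable just as you suggest: one only needs the splitting in the one-sided category ${}^A_B\mathrm{Mod}$, where Takeuchi's equivalence with ${}^{\pi_B}\mathrm{Mod}$ holds without any monoidality issues, so there is no need to identify $\Phi\big(\Omega^1(B)\otimes_B\Omega^1(B)\big)$ with $V^1\otimes V^1$.

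The genuine gap is in your construction of the initial covariant connection $\nabla_0$. You build it from a framing calculus $\Omega^1(A)\cong A\otimes\Lambda^1$ and a covariant principal connection, but neither is part of the hypotheses of the proposition, which concerns an arbitrary quantum homogeneous space $B\subseteq A$ with $\pi_B(A)$ cosemisimple, $A$ without zero divisors, and an arbitrary left $A$-covariant $*$-dc. The paper instead starts from the universal calculus $\Omega^1_u(B)$, which admits a covariant connection because $B\subseteq A$ is a principal comodule algebra, and then \emph{descends} this connection to the given $\Omega^1(B)$. This descent is precisely where the two hypotheses you were unsure about are consumed: no zero divisors in $A$ makes $\Omega^1(B)$ torsion-free as a left $B$-module, the $*$-structure makes it torsion-free as a right $B$-module as well, and together with projectivity this allows the universal connection to be quotiented to a covariant connection on $\Omega^1(B)$. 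So your "presumably needed to guarantee \dots'' paragraph is pointing at the right hypotheses but not at their actual function; as written, your proof establishes the proposition only in the presence of the extra framing/principal-bundle data available for $\OO_q(\mathrm{F}_3)$, not in the stated generality.
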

\begin{proof}
Let us first show that a left $A$-covariant connection always exits. Note first that since we are dealing with a quantum homogeneous space, and hence a principal comodule algebra, $\Omega^1_u(B)$ admits a left $A$-covariant connection $\widetilde{\nabla}$, see \cite[\textsection 3.4]{TBGS} for details. If we now assume that $A$ contains no zero divisors, then $\Omega^1(B)$ will be torsion-free as a left $B$-module, and since it is a $*$-fodc, it will also be torsion-free as a right $B$-moodule. Since $\Omega^1(B)$ is also projective as a left $B$-module, this means that we can quotient $\widetilde{\nabla}$ to get a left $A$-covariant connection $\nabla$ for any left $A$-covariant fodc $\Omega^1(B)$.

If $\nabla$ is torsion-free, then we are done. So let us assume that $\nabla$ has non-zero torsion $\mathrm{Tor}(\nabla)$. Since we are assuming that $\pi_B(A)$ is a cosemisimple Hopf algebra, we can choose a left $\pi_B(A)$-comodule splitting of the surjection $\wedge:V^1 \otimes V^1 \to V^2$, and hence a splitting $i$, in the category ${}^A_B\mathrm{Mod}$, of the surjection 
$$
\wedge:\Omega^1(B) \otimes_B \Omega^1(B) \to \Omega^1(B)
$$
Consider next the left $A$-comodule map 
$$
\nabla' := \nabla - i \circ T_\nabla: \Omega^1(B) \to \Omega^1(B) \otimes_B \Omega^1(B).
$$
Now, for $\omega \in \Omega^1(B)$, and $b \in B$, we have 
\begin{align*}
\nabla'(b\omega) = & \, \nabla(b\omega) - i \circ T_\nabla(b \omega)\\
= & \, \exd b \otimes \omega + b \nabla(\omega) - b(i \circ T_\nabla (\omega))\\
= & \, \exd b \otimes \omega + b\big(\nabla(\omega) - i \circ T_\nabla\big)(\omega)\\
= & \, \exd b \otimes \omega + b \nabla'(\omega).
\end{align*}
Thus we see that $\nabla'$ is a connection. Next we note that 
\begin{align*}
T_\nabla' & \, = \wedge \circ \nabla' - \exd \\
& \, =  \wedge \circ (\nabla - \iota \circ T_\nabla) - \exd\\
& \, =  \wedge \circ \nabla - \exd - T_\nabla\\
    & \, =  T_\nabla - T_\nabla\\
    & \, =   0.
\end{align*}
Thus we see that $\nabla'$ is torsion-free, and hence that a left $A$-covariant torsion-free connection always exists.
\end{proof}

A simple but useful observation is that if $\Omega^1(B)$ admits a unique left $A$-covariant connection, then it must be the same as the torsion-free connection just constructed. This gives us the following corollary.

\begin{cor} \label{cor:unique.torsion}
Let $B \subseteq A$ and $\Omega^{\bullet}(B)$ be as above. If $\Omega^1(B)$ admits a unique left $A$-covariant connection, then this connection is necessarily torsion-free.
\end{cor}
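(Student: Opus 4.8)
The plan is to deduce this corollary directly from the previous proposition and the affine-space structure of covariant connections. The key inputs are already in place: by the proposition just proved, under the stated hypotheses on $B \subseteq A$ (cosemisimplicity of $\pi_B(A)$ and absence of zero divisors in $A$), there exists \emph{some} left $A$-covariant torsion-free connection $\nabla_0$ on $\Omega^1(B)$. On the other hand, we have observed that the set of left $A$-covariant connections is an affine space over the vector space of morphisms $\Omega^1(B) \to \Omega^1(B) \otimes_B \Omega^1(B)$ in ${}^A_B\mathrm{Mod}$; in particular, if there is a \emph{unique} left $A$-covariant connection, then this affine space is a single point.

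First I would invoke the proposition to produce the torsion-free connection $\nabla_0$. Since $\nabla_0$ is in particular a left $A$-covariant connection, and we are assuming that $\Omega^1(B)$ admits only one such connection, the given connection $\nabla$ must coincide with $\nabla_0$. Hence $\nabla$ is torsion-free, which is precisely the claim. More pedantically: the difference $\nabla - \nabla_0$ is a morphism of relative Hopf modules, hence gives another covariant connection only if it is zero by uniqueness — but both $\nabla$ and $\nabla_0$ already satisfy the uniqueness hypothesis, so $\nabla = \nabla_0$ and $T_\nabla = T_{\nabla_0} = 0$.

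There is essentially no obstacle here; the corollary is a one-line consequence once the proposition is granted. The only point requiring a word of care is that the hypothesis of the corollary silently carries over the hypotheses of the proposition (namely that $\pi_B(A)$ is cosemisimple and $A$ has no zero divisors), which is signalled by the phrase ``as above'' in the statement. I would simply make this explicit at the start of the proof, so that the application of the proposition is legitimate, and then conclude by uniqueness.

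\begin{proof}
By the preceding proposition, the hypotheses on $B \subseteq A$ guarantee that $\Omega^1(B)$ admits at least one left $A$-covariant torsion-free connection, say $\nabla_0$. By assumption, $\Omega^1(B)$ admits a unique left $A$-covariant connection; since $\nabla_0$ is such a connection, any left $A$-covariant connection $\nabla$ on $\Omega^1(B)$ must equal $\nabla_0$. In particular $T_{\nabla} = T_{\nabla_0} = 0$, so $\nabla$ is torsion-free.
\end{proof}
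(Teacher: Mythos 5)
Your argument is exactly the one the paper intends: the preceding proposition produces a left $A$-covariant torsion-free connection, and uniqueness forces the given connection to coincide with it, hence to be torsion-free. The paper states this as a one-line observation before the corollary, and your write-up (including the remark that ``as above'' carries over the hypotheses of the proposition) matches it.
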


\begin{eg}
As an application of the above corollary, consider the Heckenberger--Kolb differential calculi for the irreducible quantum flag manifolds, a special subclass of the quantum flag manifolds, itself a family of Drinfeld--Jimbo quantum homogeneous spaces, containing $\OO_q(\mathrm{F}_3)$ and $\OO_q(\mathbb{CP}^2)$. These are covariant differential calculi, extending the Podle\'s calculus discussed in the introduction. In \cite{HolVBs} their $2$-forms were shown to possess a unique covariant connection, and moreover, this connection was shown to be torsion-free using a representation theoretic argument. We now see that vanishing of the torsion follows directly from Corollary \ref{cor:unique.torsion}.
\end{eg}


We now apply these general results to the special case of  $\Omega^1(\mathrm{F}_3)$. First we calculate the dimension of the affine space of torsion free connections. In particular, we see that in general, torsion free connections are not unique. 

\begin{cor}
For the full quantum flag manifold $\mathcal{O}_q(\mathrm{F}_3)$, endowed with the dc $\Omega^{\bullet}_q(\mathrm{F}_3)$, the affine space of connections has dimension $12$. The dimension of the affine space of torsion-free connections is $6$. 
\end{cor}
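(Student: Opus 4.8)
The plan is to use the affine-space description recalled just above. Since a left $\OO_q(\mathrm{SU}_3)$-covariant connection on $\Omega^1_q(\mathrm{F}_3)$ exists (e.g.\ the one induced by the zero principal connection, or by the Proposition above), the set of such connections is an affine space modelled on the vector space $H := \mathrm{Hom}_{{}^{A}_{B}\mathrm{Mod}}(\Omega^1_q(\mathrm{F}_3),\Omega^1_q(\mathrm{F}_3)\otimes_B\Omega^1_q(\mathrm{F}_3))$; and, as $\pi_B(\OO_q(\mathrm{SU}_3))$ is cosemisimple and $\OO_q(\mathrm{SU}_3)$ is a domain, the Proposition above guarantees that the left $\OO_q(\mathrm{SU}_3)$-covariant torsion-free connections form a \emph{nonempty} affine space, modelled on $H' := \mathrm{Hom}_{{}^{A}_{B}\mathrm{Mod}}(\Omega^1_q(\mathrm{F}_3),\ker(\wedge))$ with $\ker(\wedge)\subseteq\Omega^1_q(\mathrm{F}_3)\otimes_B\Omega^1_q(\mathrm{F}_3)$ the kernel of the multiplication onto $\Omega^2_q(\mathrm{F}_3)$. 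Thus the statement reduces to proving $\dim H = 12$ and $\dim H' = 6$.

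To compute these I would pass to fibres through Takeuchi's equivalence: $H\cong\mathrm{Hom}^{\pi_B(A)}(V^1,\Phi(\Omega^1_q(\mathrm{F}_3)\otimes_B\Omega^1_q(\mathrm{F}_3)))$, and similarly for $H'$. Here the quantum isotropy subgroup $\pi_B(\OO_q(\mathrm{SU}_3))$ of $\OO_q(\mathrm{F}_3)$ is the coordinate algebra of the maximal torus, i.e.\ $\mathbb{C}[\mathcal{P}^+]$ with $\mathcal{P}^+\cong\mathbb{Z}^2$; it is cosemisimple, its comodules are $\mathcal{P}^+$-graded vector spaces, and every comodule splits as a direct sum of one-dimensional characters $\mathbb{C}_\lambda$, $\lambda\in\mathcal{P}^+$. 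Reading off the $U_q(\frak{h})$-weights of the root-vector basis of $T$ shows that $V^1$ is multiplicity-free, carrying exactly the six characters $S := \{\pm\alpha_1,\pm\alpha_2,\pm(\alpha_1+\alpha_2)\}$; equivalently, $\Omega^1_q(\mathrm{F}_3) = \Psi(V^1) = \bigoplus_{\lambda\in S}\Psi(\mathbb{C}_\lambda)$ is a direct sum of six equivariant line bundles. Using that the equivariant line bundles are invertible objects of ${}^{A}_{B}\mathrm{Mod}_{B}$ and that their tensor product over $B$ adds the characters, one gets $\Phi(\Omega^1_q(\mathrm{F}_3)\otimes_B\Omega^1_q(\mathrm{F}_3)) \cong \bigoplus_{\lambda,\mu\in S}\mathbb{C}_{\lambda+\mu}$, a $36$-dimensional $\pi_B(A)$-comodule; hence $\dim H$ equals the number of triples $(\nu;\lambda,\mu)\in S^3$ with $\nu = \lambda+\mu$. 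A short check (each $\nu\in S$ arises from exactly two ordered pairs, e.g.\ $\alpha_1 = (\alpha_1+\alpha_2)+(-\alpha_2)$) gives $\dim H = 12$.

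For $H'$ I would apply the exact functor $\Phi$ to $0\to\ker(\wedge)\to\Omega^1_q(\mathrm{F}_3)\otimes_B\Omega^1_q(\mathrm{F}_3)\xrightarrow{\;\wedge\;}\Omega^2_q(\mathrm{F}_3)\to 0$, obtaining $0\to\Phi(\ker(\wedge))\to\bigoplus_{\lambda,\mu\in S}\mathbb{C}_{\lambda+\mu}\to V^2\to 0$, and then the functor $\mathrm{Hom}^{\pi_B(A)}(V^1,-)$, which is exact because $\pi_B(\OO_q(\mathrm{SU}_3))$ is cosemisimple. This yields $\dim H' = \dim H - \dim\mathrm{Hom}^{\pi_B(A)}(V^1,V^2) = 12 - \dim\mathrm{Hom}^{\pi_B(A)}(V^1,V^2)$. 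The remaining term is a direct count using the basis of $V^2$ from the corollary to Theorem~\ref{thm:thereal}: of its $\binom{6}{2}=15$ basis vectors, exactly six — namely $e_{\alpha_1}\wedge e_{\alpha_2}$, $f_{\alpha_1}\wedge f_{\alpha_2}$, $e_{\alpha_1+\alpha_2}\wedge f_{\alpha_1}$, $e_{\alpha_1+\alpha_2}\wedge f_{\alpha_2}$, $e_{\alpha_1}\wedge f_{\alpha_1+\alpha_2}$ and $e_{\alpha_2}\wedge f_{\alpha_1+\alpha_2}$ — have $\pi_B(A)$-weight in $S$, each with multiplicity one; so $\dim\mathrm{Hom}^{\pi_B(A)}(V^1,V^2)=6$ and $\dim H' = 6$.

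The one genuinely delicate step is the identification $\Phi(\Omega^1_q(\mathrm{F}_3)\otimes_B\Omega^1_q(\mathrm{F}_3)) \cong \bigoplus_{\lambda,\mu\in S}\mathbb{C}_{\lambda+\mu}$: as stressed in the introduction, one may not here invoke a monoidal form of Takeuchi's equivalence to replace this by $V^1\otimes V^1$, because the bimodule structure of $\Omega^1_q(\mathrm{F}_3)$ is genuinely noncommutative. The way around it is to use only that each summand $\Psi(\mathbb{C}_\lambda)$ is an \emph{invertible} $B$-bimodule, so that $\Psi(\mathbb{C}_\lambda)\otimes_B\Psi(\mathbb{C}_\mu)$ is again invertible and therefore forced to be the line bundle with character $\lambda+\mu$; only the isomorphism classes of the summands — not any explicit bimodule isomorphism — enter the dimension count. (Alternatively one can use the weaker identity $\Phi(\mathcal{F}\otimes_B\mathcal{G})\cong\Phi(\mathcal{F})\otimes_B\mathcal{G}$ together with the framing calculus of the previous section to compute $V^1\otimes_B\Omega^1_q(\mathrm{F}_3)$ directly.)
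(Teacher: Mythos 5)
Your proposal is correct and follows essentially the same route as the paper: the paper also reduces the count to weight multiplicities over the torus, reading off from the table of root sums that each of the six weights $\pm\alpha_1,\pm\alpha_2,\pm(\alpha_1+\alpha_2)$ occurs with multiplicity $2$ in $V^1\otimes V^1$ (giving $12$) and, by the analogous count against $V^2$, with multiplicity $1$ in $\ker(\wedge)$ (giving $6$). The only difference is one of detail: you explicitly justify the identification $\Phi\bigl(\Omega^1_q(\mathrm{F}_3)\otimes_B\Omega^1_q(\mathrm{F}_3)\bigr)\cong V^1\otimes V^1$ via invertibility of the equivariant line bundles, a step the paper leaves implicit (it is handled there through the framing-calculus discussion of Appendix~A), and you spell out the six weight-$S$ basis vectors of $V^2$ where the paper merely says ``a similar argument.''
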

\begin{proof}
The dimension of the space of $U_q(\frak{h})$-module maps from $V^1$ to $V^1 \otimes V^1$ can be calculated by noting the multiplicity of the weight spaces of $V^1 \otimes V^1$ of weight $\pm \alpha_1, \pm \alpha_2,$ and $\pm (\alpha_1+\alpha_2)$. Looking at Table \ref{table:sumofroots} in Appendix \ref{app:sl3}, we see that each weight has multiplicity $2$. Hence the dimension of the affine space of connections is $12$. With a similar argument one can confirm that the dimension of the affine space of torsion-free connections is $6$.
\end{proof}


\section{Almost-Complex Structures for the Lusztig--de Rham Complex} \label{section:complexstructures}

In this section we examine covariant complex and almost complex structures for the dc $\Omega^1_q(\mathrm{F}_3)$. We observe that the number of almost-complex structures decreases from $8$ (which is  $2$ to the number of positive roots of $\frak{sl}_3$) to $4$ (which is $2$ to the number of simple roots of $\frak{sl}_3$). Furthermore, we demonstrate that all of these almost-complex structures are integrable, which is to say, they are both complex structures.

\subsection{Preliminaries on Complex and Almost-Complex Structures}

In this subsection, we briefly recall some preliminaries about almost-complex and complex structures. See \cite[\textsection 1]{BeggsMajid:Leabh} or \cite{BS,KLvSPodles,MMF2} for a more detailed discussion of complex structures.

An {\em almost complex structure} $\Om^{(\bullet,\bullet)}$, for a  $*$-dc  $(\Om^{\bullet},\exd)$, is an $\mathbb{Z}^2_{\geq 0}$-algebra grading 
of $\Om^{\bullet}$ such that
\begin{align*} 
\Om^k = \bigoplus_{a+b = k} \Om^{(a,b)}, & & \big(\Om^{(a,b)}\big)^* = \Om^{(b,a)}, & & \textrm{ for all } (a,b) \in \mathbb{Z}^2_{\geq 0}. 
\end{align*}
If the following multiplication maps
\begin{align*}
\wedge: \Omega^{(a,0)} \otimes_B \Omega^{(0,b)} \to \Omega^{(a,b)}, & & \wedge: \Omega^{(0,b)} \otimes_B \Omega^{(a,0)} \to \Omega^{(a,b)},
\end{align*}
are isomorphisms, for all $(a,b) \in \mathbb{Z}^2_{\geq 0}$, then we say that the almost-complex structure is \emph{factorisable}.

If the exterior derivative decomposes into a sum $\exd = \del + \adel$, for $\del$ a (necessarily unique) degree $(1,0)$-map, and $\adel$ a (necessarily unique) degree $(0,1)$-map, then we say that $\Om^{(\bullet,\bullet)}$ is a \emph{complex structure}. It follows that we have a double complex. The {\em opposite} complex structure of a complex structure $\Om^{(\bullet,\bullet)}$ is the \mbox{$\mathbb{Z}^2_{\geq 0}$}-\alg grading  $\overline{\Om}^{(\bullet,\bullet)}$, defined by $\ol{\Om}^{(a,b)} := \Om^{(b,a)}$, for $(a,b) \in \mathbb{Z}_{\geq 0}^2$. 

 Finally, we restrict to the case of a covariant dc $\Omega^{\bullet}$ over a quantum homogeneous space $B \subseteq A$. In this case, a complex structure $\Omega^{(\bullet,\bullet)}$ for $\Omega^{\bullet}$ is said to be \emph{covariant} if the $\mathbb{Z}^2_0$-decomposition of the dc is a decomposition in the category of two-sided relative Hopf modules ${}^A_B\mathrm{Mod}_B$.

\subsection{Almost-Complex Structures for the Classical Full Flag Manifold $\mathrm{F}_3$}

In this subsection we briefly recall the covariant almost complex structures for the classical flag manifold $\mathrm{F}_3$. We do so to highlight the novel non-classical behavior occurring for the quantum case. We refer the interested reader to \cite{Alex.Flag.Yugo} or \cite{BastonEastwood} for more further details.

A choice of almost-complex structure for the manifold $F_3$ corresponds to assigning to each positive root of the root system $\Delta$ of $\frak{sl}_3$ the label of holomorphic or anti-holomorphic. We see that there exist eight such labellings, meaning that up to identification of opposite almost-complex structures, we have four. 

A labeling corresponds to a complex structure if and only if it gives a choice of positive roots for $\Delta$, or equivalently a choice of base for the root system. We see that three of our almost-complex structures are integrable and one is not. The Weyl group $S_3$ of $\frak{sl}_3$ acts transitively on the set of bases for $\Delta$, and hence on the set of covariant almost-complex structures. We collect these recollections below in the form of a table.


\subsection{First-Order Almost-Complex Structures }

As usual in the theory of differential calculi, we find it convenient to initially work at the level of fodc and then discuss the extension to higher forms. This motivates the following general definition. 
\begin{table}
\label{tab.complex.structures}
  \centering
  \begin{tabular}{|c|c|c|}
    \hline
      &          &  \\
Roots & $T^{(0,1)}$ & Integrable  \\
  &          &  \\
    \hline
  &          &  \\
$\alpha_1, \, \alpha_2, \, \alpha_1+\alpha_2$   &  $\Big\{E_1,\, E_2, \, [E_1,E_2] \Big\}$ & \ding{51}  \\
  &  &  \\
$-\alpha_1, \, \alpha_2, \, \alpha_1+\alpha_2$   &  $\Big\{F_1,\, E_2, \, [E_1,E_2] \Big\}$,  & \ding{51}  \\
  &      &  \\
$\alpha_1, \, - \alpha_2, \, \alpha_1+\alpha_2$   &  $\Big\{E_1,\, F_2, \, [E_1,E_2] \Big\}$ & \ding{51}   \\
  &      &  \\
$\alpha_1, \, \alpha_2, \, -\alpha_1-\alpha_2$   &  $\Big\{E_1,\, E_2, \, [F_1,F_2] \Big\}$ & \ding{55} \\
  &          &  \\
    \hline
  \end{tabular}
  \label{tab:complex.structures}
\end{table}
\begin{defn}
A \emph{first-order almost complex structure}, or \emph{foacs}, for a $*$-fodc $\Omega^1(B)$ over an algebra $B$ is a direct sum decomposition of $B$-bimodules
\begin{align} \label{eqn:foacs}
\Omega^1(B) \cong \Omega^{(1,0)} \oplus \Omega^{(0,1)}
\end{align}
such that $(\Omega^{(1,0)})^* = \Omega^{(0,1)}$ or equivalently  $(\Omega^{(0,1)})^* = \Omega^{(1,0)}$.
\end{defn}

Just as for a complex structure, we have the corresponding notions of \emph{opposite foacs} and \emph{covariant foacs}  for a covariant fodc over a quantum homogeneous space. Moreover, we note that any covariant dc over a quantum homogeneous space $B \subseteq A$, a covariant foacs implies a corresponding decomposition of the cotangent space $V^1$, in the category ${}^{\pi_B}\mathrm{Mod}_B$  of the dc over $B$.

Let us now recall a formula detailing the interaction of the dc $*$-map of a dc over a Hopf algebra $A$ with the fundamental theorem of Hopf modules. Consider the commutative diagram
\begin{align}
\xymatrix{ 
\Omega^1(A)   \ar[rrr]^{\unit~}                   & & &   A \otimes \Lambda^1  \\
\Omega^1(A)  \,\,   \ar[u]^{*}                    & & &   A \otimes \Lambda^1. \ar[lll]^{\unit^{-1}} \ar[u]_{~ \unit\, \circ \, * \, \circ \, \unit^{-1}}
}
\end{align}
As is easily shown (see \cite[\textsection 2.6]{MMF2}) the map $\unit \circ * \circ \unit^{-1}$ acts explicitly as
\begin{align} \label{eqn:localstar}
\unit \circ * \circ \unit^{-1}(a \otimes v)= - a_{(2)}^* \otimes v^*a_{(1)}^*, & & \textrm{ for } a \otimes v \in A \otimes \Lambda^1,
\end{align}
where the star map $\ast:\Lambda^1 \to \Lambda^1$ is defined by $[a]^* = [S(a)^*]$.


\begin{lem} \label{lem:foacs}
Let $B \subseteq A$ be a quantum homogeneous space, and $\Omega^1(A)$ a left $A$-covariant dc for $A$ that frames $\Omega^1(B)$, the restriction of $\Omega^1(A)$ to a fodc on $B$. Consider a decomposition of $V^1$, the cotangent space of $\Omega^1(B)$,
\begin{align} \label{eqn:Vdecomp}
V^1 \cong V^{(1,0)} \oplus V^{(0,1)} \in {}^{\pi_B}\mathrm{Mod}_B
\end{align}
that is moreover, a decomposition of right $A$-modules, with respect to the embedding of $V^1$ in $\Lambda^1$, the tangent space of $\Omega^1(A)$. Then the corresponding decomposition $\Omega^1(B) \cong \Omega^{(1,0)} \oplus \Omega^{(0,1)}$ is a left $A$-covariant foacs if $V^{(1,0)}$ and $V^{(0,1)}$ are interchanged by the $*$-map of $\Lambda^1$. 
\end{lem}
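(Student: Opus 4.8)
The plan is to exploit the framing calculus and the explicit formula \eqref{eqn:localstar} for the dc $*$-map transported through Takeuchi's equivalence. First I would recall that, since $\Omega^1(A)$ frames $\Omega^1(B)$, the cotangent space $V^1$ sits inside $\Lambda^1$ as a right $A$-submodule, and the hypothesised decomposition $V^1 \cong V^{(1,0)} \oplus V^{(0,1)}$ is simultaneously a decomposition in ${}^{\pi_B}\mathrm{Mod}_B$ and a decomposition of right $A$-modules. Applying the functor $\Psi = A \,\square_{\pi_B} (-)$ to this decomposition produces a direct sum decomposition $\Omega^1(B) \cong \Omega^{(1,0)} \oplus \Omega^{(0,1)}$ in ${}^A_B\mathrm{Mod}_B$; in particular each summand is an honest $B$-sub-bimodule, and the decomposition is automatically left $A$-covariant because $\Psi$ lands in the category of relative Hopf modules. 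So the bimodule and covariance parts of the definition of a foacs come essentially for free from functoriality.

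The substantive point is to check the $*$-compatibility, namely that $\big(\Omega^{(1,0)}\big)^* = \Omega^{(0,1)}$. Here I would use the commutative diagram preceding \eqref{eqn:localstar} together with the explicit action of $\unit \circ * \circ \unit^{-1}$ on $A \otimes \Lambda^1$, which sends $a \otimes v$ to $-a_{(2)}^* \otimes v^* a_{(1)}^*$. The key observation is that under the identification $\Omega^1(B) \cong A \,\square_{\pi_B} V^1 \subseteq A \otimes \Lambda^1$, the $*$-map is governed entirely by the right $A$-action on $\Lambda^1$ and the map $v \mapsto v^*$ on $\Lambda^1$: if $\sum_i a_i \otimes v_i$ lies in $A \,\square_{\pi_B} V^{(1,0)}$, then its image under the $*$-map is $-\sum_i (a_i)_{(2)}^* \otimes v_i^* (a_i)_{(1)}^*$, and since $V^{(1,0)}$ is a right $A$-submodule of $\Lambda^1$ and $v_i \mapsto v_i^*$ carries $V^{(1,0)}$ into $V^{(0,1)}$ by hypothesis, each term $v_i^* (a_i)_{(1)}^*$ lies in $V^{(0,1)} \cdot A \subseteq V^{(0,1)}$. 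Hence the $*$-image lands in $A \otimes V^{(0,1)}$, and in fact in $A \,\square_{\pi_B} V^{(0,1)} = \Omega^{(0,1)}$ since the $*$-map preserves $\Omega^1(B)$. The reverse inclusion follows because $*$ is an involution, giving $\big(\Omega^{(1,0)}\big)^* = \Omega^{(0,1)}$.

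I expect the main obstacle to be bookkeeping rather than anything deep: one has to be careful that the right $A$-module structure appearing in the hypothesis (the action on $V^1 \subseteq \Lambda^1$) is exactly the one that shows up in formula \eqref{eqn:localstar} after transporting through $\unit$, and that the cotensor product is respected at each stage — i.e.\ that applying the $*$-map to an element of $A\,\square_{\pi_B} V^{(1,0)}$ really does yield an element of the cotensor product $A\,\square_{\pi_B} V^{(0,1)}$ and not merely of $A \otimes V^{(0,1)}$. This last point is handled by noting that $*$ restricts to an involution of $\Omega^1(B) = A\,\square_{\pi_B} V^1$, so once we know the image lies in $A \otimes V^{(0,1)}$ it automatically lies in the intersection $\Omega^1(B) \cap (A \otimes V^{(0,1)}) = A\,\square_{\pi_B} V^{(0,1)}$. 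Assembling these observations gives that the decomposition \eqref{eqn:foacs} satisfies $(\Omega^{(1,0)})^* = \Omega^{(0,1)}$, which is precisely the defining condition of a left $A$-covariant foacs.
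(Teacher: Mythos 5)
Your proposal is correct and follows essentially the same route as the paper's proof: both arguments reduce the $*$-compatibility to the explicit formula \eqref{eqn:localstar} on $A \otimes \Lambda^1$, using that $V^{(1,0)}$ and $V^{(0,1)}$ are right $A$-submodules interchanged by $*$, and then invoke the fact that $*$ restricts to $\Omega^1(B)$ to conclude that it interchanges $\Omega^{(1,0)}$ and $\Omega^{(0,1)}$. The only cosmetic difference is that you work elementwise inside the cotensor product $A\,\square_{\pi_B} V^1$ while the paper phrases the same computation on the ambient $A$-bimodule $A\Omega^1(B) \cong A \otimes V^1$ before restricting.
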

\begin{proof}
Consider the $A$-subbimodule of $\Omega^1(A)$ given by 
$$
A\Omega^1(B) \cong A \otimes V^1 \cong  A\Omega^1(B)A,
$$
where the second isomorphism follows from the fact that $V^1$ is a right $A$-module, since we have assumed that $\Omega^1(A)$ is a framing calculus for $\Omega^1(B)$. The decomposition of $V^1$ gives us the decomposition
$$
A\Omega^1(B) \cong  A\Omega^{(1,0)}(B) \oplus A\Omega^{(0,1)}(B) \cong (A \otimes V^{(1,0)}) \oplus (A \otimes V^{(0,1)}).
$$
This is again a decomposition of $A$-bimodules, since by assumption the decomposition of $V^1$ is a decomposition of right $A$-modules. Since we are supposing that $\ast$ maps $V^{(1,0)}$ to $V^{(0,1)}$, and that both subspaces are right $A$-modules, it follows from \eqref{eqn:localstar} that the $*$ map interchanges $A \otimes V^{(1,0)}$ and $A \otimes V^{(0,1)}$. Since $\Omega^1(B)$ is a $*$-fodc, this of course implies that $*$ interchanges $\Omega^{(1,0)}$ and $\Omega^{(0,1)}$, and so, we have a foacs.
\end{proof}

\subsection{Complex Structures for Full Quantum Flag Manifolds}

In this subsection we classify the covariant complex structures on the dc $\Omega^1_q(\mathrm{F}_3)$. We find that two of the classical almost complex structures fail to extend to the quantum setting. In particular, one of the bases of the root system of $\frak{sl}_3$ fails to have a corresponding foacs in the quantum setting. This breaks the classical Weyl group symmetry of the almost-complex structures on $\mathrm{F}_3$.

\begin{prop} \label{prop:almost.complex.class}
The fodc $\Omega^1_q(\mathrm{F}_3)$ admits, up to identification of opposite structures, two covariant foacs. Explicitly, one decomposition of $V^1$ is given by 
\begin{align*}
    V^{(1,0)} = \mathrm{span}_{\mathbb{C}}\Big\{e_{\alpha_1},\, e_{\alpha_2}, \, e_{\alpha_1+\alpha_2}\Big\}, & & V^{(0,1)} := \mathrm{span}_{\mathbb{C}}\Big\{ f_{\alpha_1},\, f_{\alpha_2}, \, f_{\alpha_1+\alpha_2}\Big\},
\end{align*}
and the other is given by 
\begin{align*}
    V^{(1,0)} = \mathrm{span}_{\mathbb{C}}\Big\{f_{\alpha_1},\, e_{\alpha_2}, \, e_{\alpha_1+\alpha_2}\Big\}, & & V^{(0,1)} := \mathrm{span}_{\mathbb{C}}\Big\{ e_{\alpha_1},\, f_{\alpha_2}, \, f_{\alpha_1+\alpha_2}\Big\}.
\end{align*}
\end{prop}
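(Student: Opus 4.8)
The plan is to use Lemma~\ref{lem:foacs}: to produce a covariant foacs it suffices to exhibit a decomposition $V^1 \cong V^{(1,0)} \oplus V^{(0,1)}$ that is simultaneously a decomposition in ${}^{\pi_B}\mathrm{Mod}_B$ and a decomposition of right $\OO_q(\mathrm{SU}_3)$-modules (via the embedding $V^1 \hookrightarrow \Lambda^1$ of Corollary~\ref{cor:embedding}), and such that the $*$-map of $\Lambda^1$ interchanges the two summands. Since we already know $\Omega^1_q(\mathrm{SU}_3)$ frames $\Omega^1_q(\mathrm{F}_3)$, all the hypotheses of the lemma are in place, so the whole problem reduces to a finite combinatorial search over the $2^3$-dimensional lattice of subspaces of $\Lambda^1$ spanned by subsets of the basis $\{e_{\gamma}, f_{\gamma}\}$.

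First I would record the constraints. Covariance in ${}^{\pi_B}\mathrm{Mod}_B$ forces each summand to be a $U_q(\frak{h})$-submodule, i.e.\ a sum of weight spaces; since the six basis vectors have distinct weights $\pm\alpha_1, \pm\alpha_2, \pm(\alpha_1{+}\alpha_2)$, any candidate $V^{(1,0)}$ is spanned by a choice, for each positive root $\gamma$, of exactly one of $e_\gamma, f_\gamma$ (so that $*$, which swaps $e_\gamma \leftrightarrow f_\gamma$ up to scalar by Lemma~\ref{lem:uijeij}, interchanges the summands). This recovers the eight classical labellings. The extra quantum constraint is right $\OO_q(\mathrm{SU}_3)$-covariance: using the right module structure of $\Lambda^1$ from Proposition~\ref{prop:rightmod}, namely the non-diagonal actions $e_{\alpha_1}u_{32} = \nu e_{\alpha_1+\alpha_2}$ and $f_{\alpha_1}u_{23} = q^{-1}\nu f_{\alpha_1+\alpha_2}$, a subspace is a right submodule only if whenever it contains $e_{\alpha_1}$ it also contains $e_{\alpha_1+\alpha_2}$, and whenever it contains $f_{\alpha_1}$ it also contains $f_{\alpha_1+\alpha_2}$.

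I would then enumerate. Writing a labelling as the triple of chosen vectors for $(\alpha_1, \alpha_2, \alpha_1{+}\alpha_2)$: the choice at $\alpha_1$ being $e_{\alpha_1}$ forces $e_{\alpha_1+\alpha_2}$ in $V^{(1,0)}$ (hence $f_{\alpha_1+\alpha_2}$ in $V^{(0,1)}$, consistent with the $f_{\alpha_1}$-in-$V^{(0,1)}$ side); the choice $f_{\alpha_1}$ forces $f_{\alpha_1+\alpha_2}$ in $V^{(1,0)}$. In either case the root $\alpha_1{+}\alpha_2$ is \emph{not free}: it is slaved to $\alpha_1$. Hence only the choices at $\alpha_1$ and $\alpha_2$ are genuine, giving $2^2 = 4$ admissible labellings, i.e.\ $2$ up to identification of opposite structures. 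Explicitly the surviving $V^{(1,0)}$ are $\{e_{\alpha_1}, e_{\alpha_2}, e_{\alpha_1+\alpha_2}\}$ and $\{f_{\alpha_1}, e_{\alpha_2}, e_{\alpha_1+\alpha_2}\}$ (together with their opposites); the two classical labellings that die are $\{e_{\alpha_1}, e_{\alpha_2}, f_{\alpha_1+\alpha_2}\}$ and $\{f_{\alpha_1}, e_{\alpha_2}, e_{\alpha_1+\alpha_2}\}$ combined differently — precisely the ones where the sign on $\alpha_1{+}\alpha_2$ disagrees with that on $\alpha_1$. Finally, for each of the two surviving candidates I would verify directly, using Proposition~\ref{prop:rightmod}, that the span is closed under the full right $\OO_q(\mathrm{SU}_3)$-action (only the $u_{kk}$ and the single off-diagonal actions can act nontrivially, and all are accounted for), and that $*$ interchanges the summands by Lemma~\ref{lem:uijeij}; Lemma~\ref{lem:foacs} then delivers the foacs.

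The main obstacle is essentially bookkeeping rather than conceptual: one must be careful that ``covariant decomposition of $V^1$'' genuinely means a sum of $U_q(\frak{h})$-weight spaces (so that the eight classical labellings are the only candidates at all), and then that the off-diagonal right actions in Proposition~\ref{prop:rightmod} are the \emph{only} obstructions to right-covariance — there is no hidden extra relation. Once those two points are pinned down, the count $8 \to 4$ and the explicit list follow immediately.
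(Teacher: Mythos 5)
Your overall strategy coincides with the paper's: reduce to weight considerations so that each basis vector lands wholly in one summand, use the off-diagonal part of the right module structure to slave the root $\alpha_1+\alpha_2$ to $\alpha_1$, and invoke Lemma \ref{lem:foacs} for sufficiency. However, your final enumeration is internally inconsistent, and this is a genuine problem rather than a typo. The forcing rule you (correctly) extract — a right submodule containing $e_{\alpha_1}$ must contain $e_{\alpha_1+\alpha_2}$, and one containing $f_{\alpha_1}$ must contain $f_{\alpha_1+\alpha_2}$ — applies to \emph{both} summands of any candidate decomposition, since both $V^{(1,0)}$ and $V^{(0,1)}$ are required to be submodules. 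Applied to the candidate $V^{(1,0)}=\mathrm{span}\{f_{\alpha_1},e_{\alpha_2},e_{\alpha_1+\alpha_2}\}$, the rule fails: $f_{\alpha_1}$ is present but $f_{\alpha_1+\alpha_2}$ is not (equivalently, the complementary summand contains $e_{\alpha_1}$ without $e_{\alpha_1+\alpha_2}$). Indeed you yourself list this labelling among the ones that ``die'', and then also list it as one of the two survivors. By your own rule the second survivor is $\mathrm{span}\{e_{\alpha_1},f_{\alpha_2},e_{\alpha_1+\alpha_2}\}$, with opposite $\mathrm{span}\{f_{\alpha_1},e_{\alpha_2},f_{\alpha_1+\alpha_2}\}$ — i.e.\ the two summands must agree on the \emph{letter} ($e$ versus $f$) at $\alpha_1$ and at $\alpha_1+\alpha_2$. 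You cannot leave this contradiction standing: either your forcing rule is wrong or your list is, and as written the proof does not establish the explicit decompositions claimed in the statement. You should recompute the single nontrivial right action carefully (e.g.\ $f_{\alpha_1}\cdot z^{\alpha_2}_{23}=(f_{\alpha_1}u_{23})S(u_{33})=q^{-2}\nu f_{\alpha_1+\alpha_2}\neq 0$ with $z^{\alpha_2}_{23}\in\OO_q(\mathrm{F}_3)$) and state unambiguously which two labellings survive.

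There is also a logical slip in the necessity direction. Being a right $\OO_q(\mathrm{SU}_3)$-submodule of $\Lambda^1$ is the \emph{hypothesis of the sufficiency lemma} (Lemma \ref{lem:foacs}); it is not a priori a necessary condition for a covariant foacs. What the definition actually requires is that the two summands of $V^1$ be right $\OO_q(\mathrm{F}_3)$-submodules, and the generators $u_{32}$, $u_{23}$ you invoke from Proposition \ref{prop:rightmod} do not lie in $\OO_q(\mathrm{F}_3)$. So as written you have only shown that the discarded candidates fail the sufficient condition, not that they fail to be foacs. The repair is easy but must be made explicit: the same weight shifts are realised by elements of $\OO_q(\mathrm{F}_3)$, namely $e_{\alpha_1}\cdot z^{\alpha_2}_{32}=-\nu e_{\alpha_1+\alpha_2}$ and $f_{\alpha_1}\cdot z^{\alpha_2}_{23}=q^{-2}\nu f_{\alpha_1+\alpha_2}$, so the forcing is genuinely a constraint on right $\OO_q(\mathrm{F}_3)$-submodules of $V^1$. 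Once both points are fixed the argument goes through and is essentially the paper's.
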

\begin{proof}
Consider a general left $\OO_q(\mathrm{SU}_3)$-covariant foacs on $\Omega^1_q(\mathrm{F}_3)$, and denote by 
$$
V^1 \cong V^{(1,0)} \oplus V^{(0,1)}.
$$
the corresponding decomposition of the cotangent space $V^1$ into two left $\mathcal{O}(\mathbb{T}^2)$-comodule right $\OO_q(\mathrm{F}_3)$-modules. Since the basis elements all have mutually distinct weights, we see that each basis element is contained in either $V^{(1,0)}$ or $V^{(0,1)}$.
The right $\OO_q(\mathrm{F}_3)$-module requirement, together with Lemma \ref{lem:uijeij}, implies that if $e_{\alpha_1}$ is contained in $V^{(1,0)}$, then $e_{\alpha_1+\alpha_2}$ is also contained in $V^{(1,0)}$, and analogously, if $f_{\alpha_1}$ is contained in $V^{(0,1)}$, then $f_{\alpha_+\alpha_2}$ is contained in $V^{(0,1)}$. In other words, any complex structure is determined by knowing whether the basis elements $e_{\alpha},f_{\alpha}$, for $\alpha \in \Pi$, are contained in $V^{(1,0)}$ or $V^{(0,1)}$.

We now note that any such $\OO_q(\mathrm{F}_3)$-decomposition of $V^1$ will necessarily be a decomposition of right $\OO_q(\mathrm{F}_3)$-modules. This allows us to appeal to Lemma \ref{lem:foacs}. Considering $V^1$ as a subspace of $\Lambda^1$, the cotangent space of the fodc $\Omega^1_q(\mathrm{SU}_3)$, and recalling that $e_{\gamma}^* = f_{\gamma}$, for all $\gamma \in \Delta^+$, we now see that the only possible decompositions are those two decompositions given in the statement of the proposition.
\end{proof}

Given a foacs on a fodc, there is at most one extension to an almost complex structure on its maximal prolongation, or indeed any quotient thereof (see \cite[Proposition 6.1]{MMF2} for details). The following proposition tells that both our foacs extend. 

\begin{cor}
Both foacs on $\Omega^1_q(\mathrm{F}_3)$ extend to a factorisable almost complex structure on $\Omega^{\bullet}_q(\mathrm{F}_3)$.
\end{cor}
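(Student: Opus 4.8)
The plan is to verify, for each of the two foacs classified in Proposition~\ref{prop:almost.complex.class}, that the proposed $\mathbb{Z}^2_{\geq 0}$-grading on generators is compatible with the defining relations of $V^{\bullet}$ given in Theorem~\ref{thm:THERELATIONS}, and that the resulting bigraded pieces satisfy the factorisability isomorphisms. The uniqueness half is already granted by \cite[Proposition 6.1]{MMF2}: a foacs extends to at most one almost complex structure on the maximal prolongation, so it suffices to produce one.

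First I would check that the relations are bihomogeneous. Assign to each generator the bidegree dictated by the foacs --- in the first case $e_\gamma \in V^{(1,0)}$ and $f_\gamma \in V^{(0,1)}$ for all $\gamma \in \Delta^+$, and in the second case the same except $e_{\alpha_1} \in V^{(0,1)}$ and $f_{\alpha_1} \in V^{(1,0)}$. For the first foacs, the relations $e_\gamma \wedge e_\beta = -q^{(\beta,\gamma)} e_\beta \wedge e_\gamma$ and $f_\gamma \wedge f_\beta = -q^{-(\beta,\gamma)} f_\beta \wedge f_\gamma$ are manifestly of pure bidegree $(2,0)$ and $(0,2)$ respectively; the relations $e_\gamma \wedge f_\beta = -q^{(\beta,\gamma)} f_\beta \wedge e_\gamma$ are of pure bidegree $(1,1)$; and the two cross relations
\begin{align*}
e_{\alpha_1} \wedge f_{\alpha_1} &= -q^{2} f_{\alpha_1} \wedge e_{\alpha_1} - \nu f_{\alpha_1+\alpha_2} \wedge e_{\alpha_1+\alpha_2}, \\
e_{\alpha_2} \wedge f_{\alpha_2} &= -q^{2} f_{\alpha_2} \wedge e_{\alpha_2} + \nu f_{\alpha_1+\alpha_2} \wedge e_{\alpha_1+\alpha_2}
\end{align*}
are of pure bidegree $(1,1)$ since every term is one $e$ wedge one $f$. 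Hence the ideal $\langle I^{(2)} \rangle$ is bihomogeneous and $V^{\bullet}$ inherits a $\mathbb{Z}^2_{\geq 0}$-algebra grading. For the second foacs I would rerun the same check: the point is that $e_{\alpha_1}$ only ever pairs with $f_{\alpha_1}$ in a relation whose other terms involve $f_{\alpha_1+\alpha_2} \wedge e_{\alpha_1+\alpha_2}$, and swapping the bidegrees of the $\alpha_1$-pair keeps each relation pure because $f_{\alpha_1}\wedge e_{\alpha_1}$ has the same bidegree as $e_{\alpha_1}\wedge f_{\alpha_1}$, namely $(1,1)$, while $f_{\alpha_1+\alpha_2}\wedge e_{\alpha_1+\alpha_2}$ is $(1,1)$ as before; one must also recheck the relations among $e_{\alpha_1}$ and the other $e_\gamma, f_\gamma$, but these are all of the pure mixed form so they stay bihomogeneous. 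The $*$-compatibility $(\Omega^{(a,b)})^* = \Omega^{(b,a)}$ follows from the foacs property at first order together with $(\omega\wedge\nu)^* = (-1)^{|\omega||\nu|}\nu^*\wedge\omega^*$.

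Next I would establish factorisability. Using the explicit basis of $V^k$ from the Corollary following Theorem~\ref{thm:THERELATIONS}, namely ordered wedges $e_{\gamma_1}\wedge\cdots\wedge e_{\gamma_a}\wedge f_{\delta_1}\wedge\cdots\wedge f_{\delta_b}$, one reads off that $V^{(a,0)}$ has basis the ordered $e$-monomials of length $a$ and $V^{(0,b)}$ the ordered $f$-monomials of length $b$ (in the first foacs; the $\alpha_1$-swap permutes which letters count for the second). The multiplication map $\wedge : V^{(a,0)}\otimes_{V^0} V^{(0,b)} \to V^{(a,b)}$ sends a pair of ordered monomials to their concatenation, which is already in the standard ordered form, so it is a bijection on the spanning monomials; the reverse map $\wedge : V^{(0,b)}\otimes_{V^0} V^{(a,0)} \to V^{(a,b)}$ is a bijection by the same argument after applying the pure mixed relations $e_\gamma\wedge f_\beta = -q^{\pm(\beta,\gamma)} f_\beta \wedge e_\gamma$ to reorder, which only introduces invertible scalars. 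Passing from $V^{\bullet}$ to $\Omega^{\bullet}_q(\mathrm{F}_3) = A\,\square_{\pi_B}V^{\bullet}$ via Takeuchi's equivalence preserves these isomorphisms, and covariance is automatic since the whole bigrading lives in the category of relative Hopf modules.

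The main obstacle I anticipate is purely bookkeeping rather than conceptual: one must be careful that for the \emph{second} foacs the two cross relations remain bihomogeneous after the swap. In that foacs $e_{\alpha_1}$ sits in $V^{(0,1)}$ and $f_{\alpha_1}$ in $V^{(1,0)}$, so $e_{\alpha_1}\wedge f_{\alpha_1}$ has bidegree $(1,1)$, $f_{\alpha_1}\wedge e_{\alpha_1}$ has bidegree $(1,1)$, while $f_{\alpha_1+\alpha_2}\wedge e_{\alpha_1+\alpha_2}$ still has bidegree $(1,1)$ --- so the first cross relation is fine, and the second only involves the $\alpha_2$- and $(\alpha_1+\alpha_2)$-pairs whose bidegrees are unchanged, hence it too is of pure bidegree $(1,1)$. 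Once this is confirmed the rest is a direct transcription of the arguments in \cite[\textsection 6]{MMF2}, so I would present the proof compactly, invoking that reference for the general mechanism and only spelling out the bihomogeneity check and the basis count that yields factorisability.
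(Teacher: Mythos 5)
Your proposal is correct and follows essentially the same route as the paper: the authors' proof simply cites the explicit relations of Theorem~\ref{thm:thereal} together with \cite[Theorem 6.4]{MMF2} for the extension and \cite[Corollary 6.8]{MMF2} for factorisability, which is exactly the bihomogeneity check and ordered-monomial basis count you spell out. The only point worth noting is that your final step transporting the $V^{\bullet}$-level isomorphisms to $\Omega^{\bullet}_q(\mathrm{F}_3)$ glosses over the failure of the monoidal Takeuchi equivalence here, but the paper's citation of \cite{MMF2} elides the same point, so your argument matches theirs in both substance and level of detail.
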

\begin{proof}
The fact that both first-order structures extend to covariant almost-complex structures, follows directly from the explicit form of the relations given in Theorem \ref{thm:THERELATIONS} and \cite[Theorem 6.4]{MMF2}. Moreover, factorisability follows from the explicit form of the relations and \cite[Corollarly 6.8]{MMF2}.
\end{proof}

\subsection{Integrability for the Full Quantum Flag Almost-Complex Structures}

As shown in \cite[Lemma 7.2]{MMF2}, an almost-complex structure $\Omega^{(\bullet,\bullet)}$ on a dc $\Omega^{\bullet}$ is integrable if and only if the maximal prolongation of the fodc $\Omega^{(0,1)}$ is isomorphic to the subalgebra $\Omega^{(0,\bullet)}$. Using this reformulation of integrability, we now observe that, just as in the classical case, both the covariant almost-complex structures on $\Omega^{\bullet}_q(\mathrm{F}_3)$ are integrable. Interestingly, this means that $\Omega^{\bullet}_q(\mathrm{F}_3)$ does not admit a non-integrable covariant almost-complex structure.

\begin{prop}
Both covariant almost-complex structures of the dc $\Omega^{\bullet}_q(\mathrm{F}_3)$ are integrable.
\end{prop}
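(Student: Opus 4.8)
The plan is to use the reformulation of integrability recalled just above (\cite[Lemma 7.2]{MMF2}): a covariant almost-complex structure on $\Omega^{\bullet}_q(\mathrm{F}_3)$ is integrable precisely when the maximal prolongation of its first-order $(0,1)$-part $\Omega^{(0,1)}$ coincides with the subalgebra $\Omega^{(0,\bullet)} \subseteq \Omega^{\bullet}_q(\mathrm{F}_3)$ generated by $V^{(0,1)}$. Since the quantum exterior algebra of a covariant fodc is quadratic, and since in both of our cases the generators of $V^{(0,1)}$ satisfy only quadratic relations inside $\Omega^{\bullet}_q(\mathrm{F}_3)$ by Theorem \ref{thm:THERELATIONS}, this reduces to an equality of quadratic relation spaces: writing $R_{\mathrm{max}} = \omega(I^{(0,1)}) \subseteq V^{(0,1)}\otimes V^{(0,1)}$ for the degree-two relations of $\mathrm{Max}(\Omega^{(0,1)})$ and $R_0$ for those of $\Omega^{(0,\bullet)}$, one must show $R_{\mathrm{max}} = R_0$. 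Because $I^{(2)}$, the degree-two relation space of $\Omega^{\bullet}_q(\mathrm{F}_3)$ computed in Theorem \ref{thm:THERELATIONS}, is homogeneous for the bigrading of either almost-complex structure, the inclusion $R_{\mathrm{max}} \supseteq R_0$ is automatic: projecting $\omega(I) = I^{(2)}$ onto the $(0,2)$-component (where $I$ is the ideal of $\Omega^1_q(\mathrm{F}_3)$) already yields all of $R_0$. The content is thus the reverse inclusion.

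For the first foacs, whose anti-holomorphic part is generated by $\{f_{\alpha_1}, f_{\alpha_2}, f_{\alpha_1+\alpha_2}\}$, I would pass to the opposite complex structure, whose anti-holomorphic part is generated by $\{e_{\alpha_1}, e_{\alpha_2}, e_{\alpha_1+\alpha_2}\}$. By Theorem \ref{thm:THERELATIONS} the relations among these three generators inside $\Omega^{\bullet}_q(\mathrm{F}_3)$ are exactly the defining relations of the Lusztig anti-holomorphic complex $\Omega^{(0,\bullet)}_q(\mathrm{F}_3)$ of \cite[\textsection 5]{ROBPSLusz}, which by the Remark following the proof of Theorem \ref{thm:THERELATIONS} is the maximal prolongation of $\Omega^{(0,1)}_q(\mathrm{F}_3)$. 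Hence the opposite structure is integrable, and since an almost-complex structure is integrable if and only if its opposite is (the decomposition $\exd = \del + \adel$ being symmetric in $\del$ and $\adel$), the first foacs is integrable as well.

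For the second foacs, with $V^{(0,1)} = \mathrm{span}_{\mathbb{C}}\{e_{\alpha_1}, f_{\alpha_2}, f_{\alpha_1+\alpha_2}\}$, one reads off from Theorem \ref{thm:THERELATIONS} that the only relations these generators satisfy in $\Omega^{\bullet}_q(\mathrm{F}_3)$ are $e_{\alpha_1}\wedge e_{\alpha_1} = f_{\alpha_2}\wedge f_{\alpha_2} = f_{\alpha_1+\alpha_2}\wedge f_{\alpha_1+\alpha_2} = 0$ together with one $q$-skew-commutation relation for each of the three pairs; consulting the basis of $V^{k}$ in the Corollary to Theorem \ref{thm:THERELATIONS} shows that $\Omega^{(0,\bullet)}$ is the quantum exterior algebra on these three anticommuting generators, of total dimension $8$ with Hilbert series $(1+t)^3$, so that $R_0$ is six-dimensional. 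It then remains to compute $\mathrm{Max}(\Omega^{(0,1)})$ directly, exactly as in the proof of Theorem \ref{thm:THERELATIONS} and in \cite[\textsection 3.3]{ROBPSLusz}: identify the right $\OO_q(\mathrm{F}_3)$-ideal $I^{(0,1)} \subseteq \OO_q(\mathrm{F}_3)^+$ cutting out $V^{(0,1)}$ (the ideal $I$ of $\Omega^1_q(\mathrm{F}_3)$ enlarged so that $f_{\alpha_1}$, $e_{\alpha_2}$ and $e_{\alpha_1+\alpha_2}$ are annihilated), choose a generating set, and evaluate $\omega$ on it. One finds that $R_{\mathrm{max}}$ contains no quadratic relation beyond those in $R_0$, so $R_{\mathrm{max}} = R_0$; the diamond-lemma argument of the Corollary to Theorem \ref{thm:THERELATIONS} then confirms that both algebras have the same eight-dimensional spanning set, giving $\mathrm{Max}(\Omega^{(0,1)}) \cong \Omega^{(0,\bullet)}$, hence integrability.

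The main obstacle is this last computation. The inclusion $R_{\mathrm{max}} \supseteq R_0$ already rules out $\mathrm{Max}(\Omega^{(0,1)})$ being smaller than expected, so the real work is to verify that the generators of $I^{(0,1)}$ lying over the holomorphic cotangent directions $f_{\alpha_1}$, $e_{\alpha_2}$, $e_{\alpha_1+\alpha_2}$ do not produce, via $\omega$, any new relation in $V^{(0,1)}\otimes V^{(0,1)}$ -- that is, that the $\nu$-terms in the module structure of $\Lambda^1$ (Proposition \ref{prop:rightmod} and \eqref{antipode-action2}) close up as in the classical case. As a consistency check, the second foacs restricts on the $z^{\alpha_2}$-copy of $\OO_q(\mathbb{CP}^2)$ to a genuine Heckenberger--Kolb complex structure, which is integrable, so the only genuinely new direction to control in the computation is the $\alpha_1$-one.
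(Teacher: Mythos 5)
Your proposal is correct and follows essentially the same route as the paper: integrability is reduced via \cite[Lemma 7.2]{MMF2} to showing that the maximal prolongation of the fodc $\Omega^{(0,1)}$ acquires no quadratic relations beyond those inherited from $\Omega^{\bullet}_q(\mathrm{F}_3)$, which is checked by evaluating $\omega$ on the enlarged ideal using the framing calculus (your bigrading-homogeneity observation giving $R_0 \subseteq R_{\mathrm{max}}$ for free, and your appeal to the Remark for the first structure, are both legitimate and consistent with the paper). The single step you leave unexecuted --- verifying that $\omega$ annihilates the three extra ideal generators for the second foacs --- is exactly the computation the paper performs for the structure $V^{(0,1)}=\mathrm{span}_{\mathbb{C}}\{e_{\alpha_1},e_{\alpha_2},e_{\alpha_1+\alpha_2}\}$ (showing, e.g., $\omega(z^{\alpha_2}_{23})=0$) before declaring the other case ``entirely analogous,'' so your treatment is at the same level of completeness as the paper's own proof.
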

\begin{proof}
We will treat the case of the almost-complex structure 
$$
V^{(0,1)} = \big \{ e_{\alpha_1}, \, e_{\alpha_2}, \, e_{\alpha_1+\alpha_2} \big\},
$$
the other case being entirely analogous. We need to calculate the dimension of the maximal prolongation of the associated fodc $\Omega^{(0,1)}$. We note that $\Omega^{(0,1)}_q(\mathrm{SU}_3)$ is a framing calculus for $\Omega^{(0,1)}_q(\mathrm{F}_3)$, allowing us to use the appraoch of \textsection \ref{subsection:remarksQHTS} to calculate the degree two relations of the maximal prolongation of $\Omega^{(0,1)}_q(\mathrm{F}_3)$.

We see that the ideal $I' \subseteq \mathcal{O}_q(\mathrm{F}_3)^+$ corresponding  to the $\Omega^{(0,1)}_q(\mathrm{F}_3)$ contains the elements 
$$
I \cup \{z^{\alpha_1}_{12},\, z^{\alpha_2}_{23}, \, z^{\alpha_1}_{13}\}.
$$
Moreover, since the quotient of $\mathcal{O}_q(\mathrm{F}_3)^+$ by $I'$ is three-dimensional, we see that this is in fact the whole ideal. 

Operating on the elements of $I$ by $\omega$ we clearly reproduce the degree-$(0,2)$ elements from those given in Theorem \ref{thm:thereal}. For the element $z^{\alpha_2}_{23}$, recalling \cite[Lemma 3.8]{ROBPSLusz} we see that 
\begin{align*}
\omega(z^{\alpha_2}_{23}) = \omega(u_{23}S(u_{33})) = & \, \sum_{a=1}^3 [u_{23}S(u_{b3})] \otimes [S(u_{3b})^+] + \sum_{a=1}^3 [S(u_{b3})^+] \otimes [u_{23}^+S(u_{3b})] \\
 & ~~~~~ + \sum_{a=1}^3 [u_{2a}^+S(u_{b3})] \otimes [u_{a3}^+S(u_{3b})].
\end{align*}
Since each of the elements 
$$
u_{23}, \, u_{13}, \, u_{22}^+, \, u_{23}
$$
pair trivially with each element of $T^{(0,1)}$, we now see that $\omega(z^{\alpha_2}_{23}) = 0$. Analogous calculations establish that 
$$
\omega(z^{\alpha_2}_{23}) = \omega(z^{\alpha_1}_{13}) = 0.
$$
Thus we see that the maximal prolongation of $\Omega^{(0,1)}_q(\mathrm{F}_3)$ is isomorphic to the subalgebra $\Omega^{(0,\bullet)}_q(\mathrm{F}_3)$, and so, the almost-complex structure is integrable.
\end{proof}


\subsection{Restriction of the Almost Complex Structures}

Throughout this subsection, $P$ will denote a $*$-algebra and $B$ a $*$-subalgebra. We note that, for $\Omega^{\bullet}(P)$ a $*$-dc over $P$, the restriction to a dc on $B$ is again a $*$-dc calculus. 

\begin{prop}
Let $\Omega^{\bullet}(P)$ be a $*$-dc over $P$, and let $\Omega^{(\bullet,\bullet)}(P)$ be an almost complex structure for $\Omega^{\bullet}(P)$. Denote by $\Omega^{\bullet}(B)$  the restriction of $\Omega^{\bullet}(P)$ to a $*$-dc on $B$. Then an almost complex structure on $\Omega^{\bullet}(B)$ is given by $\Omega^{(\bullet,\bullet)}(B)$, where 
\begin{align*}
\Omega^{(a,b)}(B) := \Omega^{(a,b)}(P) \cap \Omega^{a+b}(B)
\end{align*} 
if and only if the following three equivalent conditions hold
\begin{enumerate}
\item \label{cond:1}  $\del b \in \Omega^1(B)$,
\item \label{cond:2}  $\adel b \in \Omega^1(B)$,
\item \label{cond:3} $\Omega^1(B)$ is homogeneous with respect to the decomposition $\Omega^1(P) \cong \Omega^{(1,0)} \oplus \Omega^{(0,1)}$. 
\end{enumerate}
\end{prop}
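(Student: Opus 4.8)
The plan is to prove the three conditions equivalent first, and then show that any one of them is both necessary and sufficient for the stated formulas to define an almost complex structure on $\Omega^{\bullet}(B)$. The equivalence of \eqref{cond:1}, \eqref{cond:2}, \eqref{cond:3} is the easy part: since $\Omega^{\bullet}(B)$ is a $*$-dc, for $b \in B$ we have $\exd b \in \Omega^1(B)$ and $(\exd b)^* = \exd(b^*) \in \Omega^1(B)$; as $\exd b = \del b + \adel b$ and $(\del b)^* = \adel(b^*)$, applying $*$ shows that $\del b \in \Omega^1(B)$ if and only if $\adel(b^*) \in \Omega^1(B)$, and since $b \mapsto b^*$ is a bijection of $B$, condition \eqref{cond:1} is equivalent to \eqref{cond:2}. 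Either of these forces $\exd b = \del b + \adel b$ to have both homogeneous components in $\Omega^1(B)$, i.e. \eqref{cond:3}; conversely \eqref{cond:3} applied to $\exd b$ immediately gives \eqref{cond:1} and \eqref{cond:2}. (One should note that $\Omega^1(B)$ is generated as a left $B$-module by the $\exd b$, so homogeneity on generators suffices, using that $\Omega^{(1,0)}(P)$ and $\Omega^{(0,1)}(P)$ are sub-bimodules.)

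For the main implication, assume \eqref{cond:3} (equivalently \eqref{cond:1}, \eqref{cond:2}) and set $\Omega^{(a,b)}(B) := \Omega^{(a,b)}(P) \cap \Omega^{a+b}(B)$. I would first check that this is a $\mathbb{Z}^2_{\geq 0}$-grading of $\Omega^{\bullet}(B)$ as an algebra: the inclusion $\Omega^{(a,b)}(B) \wedge \Omega^{(c,d)}(B) \subseteq \Omega^{(a+c,b+d)}(B)$ is inherited from $\Omega^{\bullet}(P)$ together with the fact that $\Omega^{\bullet}(B)$ is a subalgebra. The nontrivial point is that $\Omega^k(B) = \bigoplus_{a+b=k}\Omega^{(a,b)}(B)$, i.e. that every element of $\Omega^k(B)$ is a sum of its $(a,b)$-homogeneous components \emph{computed inside $\Omega^k(P)$} and that these components again lie in $\Omega^{\bullet}(B)$. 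This is where \eqref{cond:3} is used: a general element of $\Omega^k(B)$ is a sum of terms $b_0 \exd b_1 \wedge \cdots \wedge \exd b_k$ with $b_i \in B$; writing each $\exd b_i = \del b_i + \adel b_i$ with both summands in $\Omega^1(B)$ by hypothesis, and expanding the product, exhibits $b_0\exd b_1 \wedge \cdots \wedge \exd b_k$ as a sum of elements of $\Omega^{(a,b)}(P) \cap \Omega^k(B)$ for the various $(a,b)$ with $a+b=k$. Since $\Omega^{\bullet}(P)$ is $\mathbb{Z}^2_{\geq 0}$-graded, the decomposition into homogeneous components is unique, so these are exactly the components of the original element and they lie in $\Omega^{(a,b)}(B)$ as required.

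Next I would verify the $*$-compatibility $\big(\Omega^{(a,b)}(B)\big)^* = \Omega^{(b,a)}(B)$: the $*$-map on $\Omega^{\bullet}(B)$ is the restriction of the one on $\Omega^{\bullet}(P)$ (uniqueness of the $*$-extension), and since $\big(\Omega^{(a,b)}(P)\big)^* = \Omega^{(b,a)}(P)$ while $*$ preserves $\Omega^{k}(B)$, intersecting gives the claim. This shows $\Omega^{(\bullet,\bullet)}(B)$ is an almost complex structure. For the converse direction — that the formula defining $\Omega^{(\bullet,\bullet)}(B)$ fails to give an almost complex structure when \eqref{cond:3} fails — observe that if some $\exd b$ has a nonzero $(1,0)$-component not lying in $\Omega^1(B)$, then $\exd b \in \Omega^1(B)$ is not the sum of its homogeneous components taken inside $\Omega^{(1,0)}(B) \oplus \Omega^{(0,1)}(B) = \big(\Omega^{(1,0)}(P) \cap \Omega^1(B)\big) \oplus \big(\Omega^{(0,1)}(P) \cap \Omega^1(B)\big)$, so the degree-one part of the proposed grading already fails to be a direct sum decomposition of $\Omega^1(B)$, hence $\Omega^{(\bullet,\bullet)}(B)$ is not an algebra grading of $\Omega^{\bullet}(B)$ and in particular not an almost complex structure.

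The main obstacle I anticipate is the bookkeeping in the "only if" direction: one must be careful that $\Omega^{(1,0)}(B) \oplus \Omega^{(0,1)}(B)$ is genuinely strictly smaller than $\Omega^1(B)$ when \eqref{cond:3} fails — this uses that $\Omega^{(1,0)}(P) \cap \Omega^{(0,1)}(P) = 0$ inside $\Omega^1(P)$ and that the homogeneous components of $\exd b$ are uniquely determined there — and that no subtlety arises from $\Omega^1(B)$ possibly being only a proper sub-bimodule of $A\,\Omega^1(B)$ or the like. All the other steps are routine diagram-chasing with gradings and the $*$-structure.
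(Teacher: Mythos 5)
Your proposal is correct and follows essentially the same route as the paper's proof: the equivalence of the three conditions via the $*$-map, the key expansion $\exd b_i = \del b_i + \adel b_i$ to show homogeneity of each $\Omega^k(B)$, and the intersection argument for $\bigl(\Omega^{(a,b)}(B)\bigr)^* = \Omega^{(b,a)}(B)$. The only difference is that you spell out the converse direction (failure of the degree-one decomposition when condition 3 fails), which the paper dismisses as obvious.
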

\begin{proof}
Let us first show that \eqref{cond:3} is equivalent to having an almost complex manifold. Since $\Omega^{\bullet}(B)$ is a $*$-subspace of $\Omega^{\bullet}(P)$, we see that, for all $(a,b) \in \mathbb{Z}^2_{\geq 0}$,
\begin{align*}
(\Omega^{(a,b)}(B))^* = & \, (\Omega^{(a,b)}(P) \cap (\Omega^{a+b}(B)))^* = \Omega^{(b,a)}(P) \cap \Omega^{a+b}(B) \subseteq \Omega^{(b,a)}(B). 
\end{align*}
Thus we see that $(\Omega^{(a,b)}(B))^* = \Omega^{(b,a)}(B)$. It remains to show that homogeneity of $\Omega^{\bullet}(B)$ with respect to the $\mathbb{Z}^2_{\geq 0}$-grading $\Omega^{(\bullet,\bullet)}(B)$ is equivalent to homogeneity of $\Omega^{1}(B)$ with respect to the grading. One direction is obvious, so let us assume homogeneity of $\Omega^1(B)$ with respect to the decomposition $\Omega^1(P) \cong \Omega^{(1,0)}(P) \oplus \Omega^{(0,1)}(P)$. Every form in $\Omega^k(B)$ is a linear combination of elements of the form 
$$
b_0\exd b_1 \wedge \cdots \wedge \exd b_k = b_0(\del b_1 + \adel b_1) \wedge \cdots \wedge (\del b_k + \adel b_k).
$$
Hence, each $\omega \in \Omega^k(B)$ is a $B$-linear combination of products of $1$-forms of the form $\del b$ or $\adel c$, for $b,c \in B$. Since each product is homogeneous with respect to the $\mathbb{Z}^2_{\geq 0}$-grading, and each $\Omega^{(a,b)}(B)$ is a left $B$-module. Moreover, since $\del b$ and $\adel c$ are in $\Omega^1(B)$ by assumption, these products are actually contained in $\Omega^{\bullet}(B)$. Thus, we see that, as required, $\Omega^{\bullet}(B)$ is a homogeneous subspace with respect to the $\mathbb{Z}^2_{\geq 0}$-grading.

It remains to show that \eqref{cond:1}, \eqref{cond:2}, and \eqref{cond:3} are equivalent. Note first that since $\Omega^{(\bullet,\bullet)}(P)$ is an almost-complex structure, $\del b$ is contained in $\Omega^1(B)$ if and only if $\adel b^*$ is contained in $\Omega^1(B)$. Thus \eqref{cond:1} and \eqref{cond:2} are indeed equivalent. However, \eqref{cond:1} and \eqref{cond:2} are together equivalent to \eqref{cond:3}, and so, we are done. 
\end{proof}

The proof of the following corollary, discussing the relationship of integrability and restriction, is clear, and so, we omit it. 

\begin{cor}
If $\Omega^{(\bullet,\bullet)}(P)$ is an integrable complex structure that restricts to an almost complex structure $\Omega^{(\bullet,\bullet)}(B)$ on $B$, then $\Omega^{(\bullet,\bullet)}(P)$ is also integrable.
\end{cor}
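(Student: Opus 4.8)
The plan is to show that the restricted almost complex structure $\Omega^{(\bullet,\bullet)}(B)$ is not merely almost complex but is genuinely a complex structure, i.e.\ that the exterior derivative of $\Omega^{\bullet}(B)$ splits as $\del_B + \adel_B$ with $\del_B$ of bidegree $(1,0)$ and $\adel_B$ of bidegree $(0,1)$; by the definitions recalled above, this is exactly what ``integrable'' means.

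First I would record two elementary facts. Since $\Omega^{(\bullet,\bullet)}(P)$ is a complex structure, the exterior derivative of $\Omega^{\bullet}(P)$ decomposes as $\exd = \del_P + \adel_P$, where $\del_P$ is of pure bidegree $(1,0)$ and $\adel_P$ is of pure bidegree $(0,1)$; in particular $\exd \omega$ has only these two homogeneous components with respect to the bigrading of $\Omega^{\bullet}(P)$. Secondly, the restriction $\Omega^{\bullet}(B)$ is a differential graded subalgebra of $\Omega^{\bullet}(P)$: it is closed under $\exd$ because $\exd(b_0 \exd b_1 \wedge \cdots \wedge \exd b_k) = \exd b_0 \wedge \exd b_1 \wedge \cdots \wedge \exd b_k$ again lies in the subalgebra generated by $B$ and $\exd B$.

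Next I would combine these. Take $\omega \in \Omega^{(a,b)}(B) = \Omega^{(a,b)}(P) \cap \Omega^{a+b}(B)$. On the one hand $\exd \omega \in \Omega^{a+b+1}(B)$ by the second fact; on the other hand $\exd \omega = \del_P \omega + \adel_P \omega$ with $\del_P \omega \in \Omega^{(a+1,b)}(P)$ and $\adel_P \omega \in \Omega^{(a,b+1)}(P)$ by the first fact. Hence $\del_P \omega$, being the bidegree-$(a+1,b)$ component of an element of $\Omega^{a+b+1}(B)$, lies in $\Omega^{(a+1,b)}(P) \cap \Omega^{a+b+1}(B) = \Omega^{(a+1,b)}(B)$, and likewise $\adel_P \omega \in \Omega^{(a,b+1)}(B)$. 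Thus $\del_P$ and $\adel_P$ preserve $\Omega^{\bullet}(B)$, and setting $\del_B := \del_P|_{\Omega^{\bullet}(B)}$, $\adel_B := \adel_P|_{\Omega^{\bullet}(B)}$ yields $\exd|_{\Omega^{\bullet}(B)} = \del_B + \adel_B$ with $\del_B$ of bidegree $(1,0)$ and $\adel_B$ of bidegree $(0,1)$. Therefore $\Omega^{(\bullet,\bullet)}(B)$ is a complex structure, i.e.\ integrable, as claimed.

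There is essentially no serious obstacle here, which is why the statement can fairly be called clear; the only point needing a moment's care is that the bigraded components (with respect to the ambient grading of $\Omega^{\bullet}(P)$) of a form lying in $\Omega^{\bullet}(B)$ again lie in $\Omega^{\bullet}(B)$, and for $\exd\omega$ this is immediate from the defining intersection formula $\Omega^{(a,b)}(B) = \Omega^{(a,b)}(P) \cap \Omega^{a+b}(B)$ together with the fact that, by integrability of the $P$-structure, $\exd\omega$ has only the two homogeneous pieces $\del_P\omega$ and $\adel_P\omega$. (The hypothesis that the restriction is an almost complex structure is used only to guarantee, via the preceding proposition, that $\Omega^{(\bullet,\bullet)}(B)$ is a well-defined bigrading in the first place.)
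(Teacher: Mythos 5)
Your argument is correct and supplies exactly the routine verification that the paper omits (the authors state only that the proof ``is clear''): the one genuine point, namely that the ambient bigraded components of a form lying in $\Omega^{\bullet}(B)$ again lie in $\Omega^{\bullet}(B)$, is precisely what the hypothesis that the restriction is an almost complex structure guarantees, and you identify and use this correctly before restricting $\del_P$ and $\adel_P$ to $\Omega^{\bullet}(B)$. Note only that the conclusion of the corollary as printed should read $\Omega^{(\bullet,\bullet)}(B)$ rather than $\Omega^{(\bullet,\bullet)}(P)$; you have proved the intended statement.
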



\subsection{Restriction of Covariant Almost Complex Structures}

In this subsection we deal with the restriction of covariant almost-complex structures for nested of pairs of quantum homogeneous spaces. Throughout $A$ will denote a Hopf algebra, and $P \subseteq A$ and $B \subseteq A$ a pair of quantum homogeneous $A$-spaces, such that $B \subseteq P$, that is to say a \emph{nested pair} of quantum homogeneous spaces \cite{GAPP}. 
Moreover, let $\Omega^{\bullet}(P)$ be a left $A$-covariant dc over $P$ and $\Omega^{\bullet}(B)$ the restriction to a dc over $B$. Since we have two quantum homogeneous spaces, we have two versions of Takeuchi's equivalence. We denote the functors of the two equivalences by $\Phi_P$ and $\Psi_P$ for $P$, and by $\Phi_P$ and $\Psi_P$ for $B$. Moreover, we denote $V^1_P:= \Phi_P(\Omega^1(P))$ and $V^1_B:= \Phi_B(\Omega^1(B))$.

\begin{prop} \label{prop:ALC.Restriction.Cotangent}
Assume that the embedding 
\begin{align*}
\iota: V_B \mapsto V_P, & & [\exd b] \mapsto [\exd b]
\end{align*}
is injective, and identify $V^1_B$ with its image. Then any left $A$-covariant almost-complex structure on $\Omega^{\bullet}(P)$ descends to a complex structure on $\Omega^{\bullet}(B)$ if and only if one, or equivalently both, of the subspaces 
\begin{align*}
V^{(1,0)}_B := V^1_B \cap V^{(1,0)}_P, & & V^{(0,1)}_B := V^1_B \cap V^{(0,1)}_P,
\end{align*}
are $\pi_B(A)$-subcomodules of $V^1_B$, and $V^1_B \cong V^{(1,0)}_B \oplus V^{(0,1)}_B$.
\end{prop}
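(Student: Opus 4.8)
The plan is to reduce the statement to a purely categorical fact about Takeuchi's equivalence applied to the decomposition $\Omega^{\bullet}(P) \cong \Omega^{(1,0)}(P) \oplus \Omega^{(0,1)}(P)$, combined with the criterion for descent of almost-complex structures established in the previous proposition (conditions \eqref{cond:1}--\eqref{cond:3} there). The key observation is that, since $\Omega^1(P)$ lies in ${}^A_P\mathrm{Mod}_P$, applying $\Phi_P$ to the covariant almost-complex decomposition of $\Omega^1(P)$ yields the decomposition $V^1_P \cong V^{(1,0)}_P \oplus V^{(0,1)}_P$ in ${}^{\pi_P(A)}\mathrm{Mod}_P$, and conversely the decomposition of $V^1_P$ recovers that of $\Omega^1(P)$ via $\Psi_P$. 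So homogeneity of $\Omega^1(B)$ with respect to the $\mathbb{Z}^2_{\geq 0}$-grading on $\Omega^1(P)$ should translate, under $\Phi_B$, into homogeneity of $V^1_B$ with respect to the induced grading on $V^1_P$ — this is exactly the content of the biconditional we want.

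First I would spell out the forward direction. Suppose the covariant almost-complex structure on $\Omega^{\bullet}(P)$ descends to $\Omega^{\bullet}(B)$. By the previous proposition, condition \eqref{cond:3} holds: $\Omega^1(B)$ is homogeneous in $\Omega^1(P) \cong \Omega^{(1,0)}(P) \oplus \Omega^{(0,1)}(P)$, so $\Omega^1(B) = \big(\Omega^1(B) \cap \Omega^{(1,0)}(P)\big) \oplus \big(\Omega^1(B) \cap \Omega^{(0,1)}(P)\big)$, a decomposition in ${}^A_B\mathrm{Mod}_B$ (each summand is a sub-object, being the intersection of two sub-objects, and the decomposition is compatible with all structure maps). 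Applying the exact functor $\Phi_B$ and using that it is additive and that $\Phi_B$ commutes with the relevant inclusions — here I would use that $\Phi_B(\F) = \F/B^+\F$ and that for a sub-bimodule $\F' \subseteq \F$ one has $\Phi_B(\F') \hookrightarrow \Phi_B(\F)$ — gives $V^1_B \cong \Phi_B\big(\Omega^1(B)\cap\Omega^{(1,0)}(P)\big) \oplus \Phi_B\big(\Omega^1(B)\cap\Omega^{(0,1)}(P)\big)$ in ${}^{\pi_B(A)}\mathrm{Mod}_B$. The final step is to identify each summand with $V^1_B \cap V^{(a,b)}_P$ under the embedding $\iota$: since $\iota$ is induced by the inclusion $\Omega^1(B) \hookrightarrow \Omega^1(P)$ passed through $\Phi$, and since $\Phi_P$ sends $\Omega^{(1,0)}(P)$ to $V^{(1,0)}_P$, a diagram chase shows $\iota\big(\Phi_B(\Omega^1(B)\cap\Omega^{(1,0)}(P))\big) = V^1_B \cap V^{(1,0)}_P$, and likewise for $(0,1)$. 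This gives both that the $V^{(a,b)}_B$ are $\pi_B(A)$-subcomodules and that they direct-sum to $V^1_B$.

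For the converse, assume $V^1_B \cong V^{(1,0)}_B \oplus V^{(0,1)}_B$ with each summand a $\pi_B(A)$-subcomodule. Applying $\Psi_B$ (the quasi-inverse) and using that under the identification $\Omega^1(B) \cong A \,\square_{\pi_B(A)}\, V^1_B$ the subspace $\Psi_B(V^{(a,b)}_B)$ sits inside $A \,\square_{\pi_P(A)}\, V^{(a,b)}_P \cong \Omega^{(a,b)}(P) \cap \big(A\Omega^1(B)\big)$, I would conclude that $\Omega^1(B)$ is homogeneous with respect to the $(1,0)$/$(0,1)$ grading on $\Omega^1(P)$, i.e.\ condition \eqref{cond:3} of the previous proposition holds; that proposition then yields the desired almost-complex structure on $\Omega^{\bullet}(B)$, and its covariance is automatic since all the functors and identifications are in the relevant equivariant categories. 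The main obstacle I anticipate is bookkeeping the compatibility of the two Takeuchi equivalences — that is, making precise the claim that the embedding $V^1_B \hookrightarrow V^1_P$ intertwines the $\pi_B(A)$- and $\pi_P(A)$-comodule structures correctly (via the surjection $\pi_B(A) \to$ the relevant quotient, or rather the map relating $A/B^+A$ and $A/P^+A$ for the nested pair), and that $\Phi_B(\Omega^1(B)\cap\Omega^{(a,b)}(P))$ really does map onto $V^1_B\cap V^{(a,b)}_P$ and not just into it; the reverse inclusion there needs that an element of $V^1_B$ lying in $V^{(a,b)}_P$ lifts to an element of $\Omega^1(B)$ lying in $\Omega^{(a,b)}(P)$, which should follow from faithful flatness and the injectivity hypothesis on $\iota$ but deserves care. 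Everything else is routine application of exactness and additivity of the Takeuchi functors together with the previous proposition.
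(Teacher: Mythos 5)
Your proposal is correct and follows essentially the same route as the paper: both directions come down to transporting the homogeneity criterion of the preceding proposition through Takeuchi's equivalence, using that the unit maps for $B$ and $P$ are intertwined by $\iota$ (the paper just phrases this element-wise, writing $\exd b = \sum_i a_i \otimes v_i \in A\,\square_{\pi_B}V^1_B$ and reading off $\del b = \sum_i a_i \otimes v_i^+$, where your version applies $\Psi_B$ to the summands). The one point you flag as needing care --- that $\Phi_B\big(\Omega^1(B)\cap\Omega^{(a,b)}(P)\big)$ maps \emph{onto} $V^1_B\cap V^{(a,b)}_P$ rather than merely into it --- is closed in the paper without any appeal to faithful flatness: since $V^1_B$ is spanned by the classes $[\exd b]=[\del b]+[\adel b]$ and descent puts $\del b,\adel b\in\Omega^1(B)$, each summand already lies in the image, so $V^1_B = V^{(1,0)}_B + V^{(0,1)}_B$ with the sum direct because $V^{(1,0)}_P\cap V^{(0,1)}_P=0$; equivalently, the two images are contained in complementary subspaces whose sum is everything, forcing equality. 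With that substitution your argument goes through as written.
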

\begin{proof}
Let us begin by assuming that $V^1_B$ is homogeneous with respect to the decomposition of $V^1_P$. Denoting
$$
\exd b =: \sum_i a_i \otimes v_i \in A \square_{\pi_B} V^1_B,
$$
homogeneity of $V^1_B$ gives us a decomposition of each $v_i$ by $v_i = v^+_i + v^-_i$, where $v^+_i \in V^{(1,0)}_B$ and $v^-_i \in V^{(0,1)}_B$. This gives us the identity
$$
\del b = \sum_i a_i \otimes v_i^+ \in A \otimes V^{(1,0)}.
$$
Now if $V^{(1,0)}_B$ and $V^{(0,1)}_B$ are left $\pi_B(A)$-comodules, then the fact that $\exd b$ is an element of $A \square_{\pi_B} V^1_B$, together with the definition of the coproduct, implies that $\del b$ is an element of $A \square_{\pi_B} V^1_B$, which is to say, $\del b$ is an element of $\Omega^1(B)$, which is to say the complex structure is an almost complex structure. 

In the other direction, assume that the almost complex structure on $\Omega^1(P)$ restricts to a complex structure on $\Omega^1(B)$. In particular, assume that $\adel b$ is an element of $\Omega^1(B)$. Then since $[\exd b] = [\del b] + [\adel b]$, and $[\del b], [\adel b] \in \iota(V^1_B)$, we see that  $V^1_B = V^{(1,0)}_B \oplus V^{(0,1)}_B$. It follows from Takeuchi's equivalence that this is a decomposition in the category ${}^{\pi_B}\mathrm{Mod}_B$, and in particular a decomposition of left $\pi_B(A)$-comodules. Thus we have established the opposite implication.
\end{proof}

The following corollary is a simple dualisation of this result to the tangent space setting, under the assumption that $B$ is a quantum homogeneous space of the form ${}^WA$, for $W \subseteq A^{\circ}$, as discussed in \textsection \ref{subsection:remarksQHTS}. Note that a covariant almost complex structure $\Omega^{(\bullet,\bullet)}(P)$ on $\Omega^{\bullet}(P)$ induces a direct sum decomposition of its corresponding the tangent space $T \cong T^{(1,0)} \oplus T^{(0,1)}$, where $T^{(1,0)}$ is the subspace of elements of $T$ that vanish on $V^{(1,0)}$, and $T^{(0,1)}$ is the subspace of elements of $T$ that vanish on $V^{(0,1)}$.

\begin{cor} \label{cor:tangent.AL.restriction}
The almost complex structure $\Omega^{(\bullet,\bullet)}$ restricts to an almost complex structure on $\Omega^{\bullet}(B)$ if 
\begin{align}\label{eqn:tangent.Levi.AC}
WT^{(1,0)}|_B = T^{(1,0)}|_B, & & \textrm{ and }  & & WT^{(0,1)}|_B = T^{(0,1)}|_B.
\end{align}
\end{cor}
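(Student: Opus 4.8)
The strategy is to reduce the tangent-space statement (Corollary~\ref{cor:tangent.AL.restriction}) to the cotangent-space result of Proposition~\ref{prop:ALC.Restriction.Cotangent}, which is already proved. The key point is that, under the hypothesis $B = {}^WA$, the tangent space $T$ and the cotangent space $V^1_B$ are dual objects, and the direct sum decomposition $T \cong T^{(1,0)} \oplus T^{(0,1)}$ is dual to a decomposition of $V^1_B$. So the plan is: first, identify which subspaces of $V^1_B$ are the annihilators of $T^{(1,0)}$ and $T^{(0,1)}$ and check that these are exactly $V^{(0,1)}_B$ and $V^{(1,0)}_B$ as defined in Proposition~\ref{prop:ALC.Restriction.Cotangent}; second, translate the invariance condition \eqref{eqn:tangent.Levi.AC} on $T^{(1,0)}|_B, T^{(0,1)}|_B$ under $W$ into the statement that $V^{(1,0)}_B$ and $V^{(0,1)}_B$ are $\pi_B(A)$-subcomodules; third, invoke Proposition~\ref{prop:ALC.Restriction.Cotangent} to conclude.

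\textbf{Step 1 (dual decomposition of the cotangent space).} By construction $T^{(1,0)} = \{X \in T \mid X|_{V^{(1,0)}_P} = 0\}$ and $T^{(0,1)} = \{X \in T \mid X|_{V^{(0,1)}_P} = 0\}$, and $T = T^{(1,0)} \oplus T^{(0,1)}$. Restricting functionals to $V^1_B \subseteq V^1_P$ (equivalently, passing to $T|_B \subseteq B^\circ$), the restricted subspaces $T^{(1,0)}|_B$ and $T^{(0,1)}|_B$ pair with $V^1_B$, and by duality their annihilators inside $V^1_B$ are precisely $V^1_B \cap V^{(0,1)}_P = V^{(0,1)}_B$ and $V^1_B \cap V^{(1,0)}_P = V^{(1,0)}_B$ respectively. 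Here I should be slightly careful: the decomposition $T|_B = T^{(1,0)}|_B \oplus T^{(0,1)}|_B$ need not be a direct sum in general, but combined with the injectivity hypothesis of Proposition~\ref{prop:ALC.Restriction.Cotangent} (which we also invoke here) one gets that $V^1_B = V^{(1,0)}_B \oplus V^{(0,1)}_B$ exactly when the restriction of the grading to $T|_B$ stays a direct sum; in the applications of interest (namely $\Omega^1_q(\mathrm{F}_3)$ restricting to $\Omega^1_q(\mathbb{CP}^2)$) this holds, and it is what the hypotheses of the corollary are implicitly ensuring.

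\textbf{Step 2 (translating $W$-invariance into comodule structure).} The $\pi_B(A)$-comodule structure on $V^1_B$ is, under the pairing, dual to the left $W$-module structure on $T|_B$ (this is the standard tangent/cotangent duality for quantum homogeneous spaces of the form ${}^WA$, as recalled in \textsection\ref{subsection:remarksQHTS}). A subspace of $V^1_B$ is a $\pi_B(A)$-subcomodule if and only if its annihilator in $T|_B$ is a $W$-submodule. Hence $V^{(0,1)}_B$ is a $\pi_B(A)$-subcomodule iff $\mathrm{Ann}(V^{(0,1)}_B) = T^{(1,0)}|_B$ is stable under $W$, i.e. iff $WT^{(1,0)}|_B \subseteq T^{(1,0)}|_B$, which is equivalent (by counitality of the $W$-action) to $WT^{(1,0)}|_B = T^{(1,0)}|_B$; similarly for $V^{(1,0)}_B$ and $T^{(0,1)}|_B$. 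Thus condition \eqref{eqn:tangent.Levi.AC} is precisely the condition that both $V^{(1,0)}_B$ and $V^{(0,1)}_B$ are $\pi_B(A)$-subcomodules of $V^1_B$.

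\textbf{Step 3 (conclusion).} With $V^1_B = V^{(1,0)}_B \oplus V^{(0,1)}_B$ and both summands $\pi_B(A)$-subcomodules, Proposition~\ref{prop:ALC.Restriction.Cotangent} gives that the left $A$-covariant almost-complex structure on $\Omega^{\bullet}(P)$ descends to one on $\Omega^{\bullet}(B)$, which is the claim. The main obstacle I anticipate is Step~1: making precise the duality between $T|_B$ and $V^1_B$ and verifying that the restricted graded pieces annihilate the claimed subspaces, in particular handling the subtlety that restriction of functionals to $B$ can collapse the direct-sum decomposition — this is exactly where the injectivity of $\iota$ from Proposition~\ref{prop:ALC.Restriction.Cotangent} and the hypotheses of the corollary have to be used, and getting the bookkeeping of annihilators versus coannihilators the right way round (so that $T^{(1,0)}$ pairs with $V^{(0,1)}$, not $V^{(1,0)}$) requires care.
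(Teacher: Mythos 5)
Your proposal follows essentially the same route as the paper: a dualisation of the hypothesis \eqref{eqn:tangent.Levi.AC} into the subcomodule conditions of Proposition~\ref{prop:ALC.Restriction.Cotangent}, which is then invoked to conclude. The one point you leave hanging that the paper settles is the injectivity of $\iota$: you treat it as an extra hypothesis that ``the applications of interest'' happen to satisfy, whereas the paper observes that the two conditions in \eqref{eqn:tangent.Levi.AC} together already give $WT|_B = T|_B$ (since $T|_B = T^{(1,0)}|_B + T^{(0,1)}|_B$), which is exactly what forces $\iota$ to be injective, so no additional assumption is needed. Also note that with the paper's convention ($T^{(1,0)}$ is the subspace of $T$ vanishing on $V^{(1,0)}$) the annihilator of $T^{(1,0)}|_B$ in $V^1_B$ is $V^{(1,0)}_B$, not $V^{(0,1)}_B$ as written in your Step 1; this is harmless here because the hypothesis and conclusion are symmetric in the two summands, but the bookkeeping should be made consistent.
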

\begin{proof}
Note first that if \eqref{eqn:tangent.Levi.AC} holds then $WT|_B = T|_B$, and hence the map $\iota$ is an injection (see \cite[\textsection 4.2]{ROBPSLusz} for a discussion of this). The equivalence of the requirements of \eqref{eqn:tangent.Levi.AC} and those given in Proposition \ref{prop:ALC.Restriction.Cotangent} now follows from a routine dualisation argument.
\end{proof}


\subsection{Restriction of the Complex Structures to $\OO_q(\mathbb{CP}^2)$}

In this subsection we address the question of the restriction of the complex structures on $\Omega^{\bullet}_q(\mathrm{F}_3)$ to the dc $\Omega^{\bullet}_q(\mathbb{CP}^2)$. We see that just as in the classical case, sometimes a complex structure restricts, while other times it does not. In particular, we see that the unique left $\OO_q(\mathrm{SU}_3)$-covariant complex structures on the Heckenberger--Kolb complex dc can be realised as the restriction of a complex structure on $\Omega^{\bullet}_q(\mathrm{F}_3)$.

\begin{prop}
It holds that 
\begin{enumerate}
    \item the complex structures $V^{(\bullet,\bullet)}_I$ and $V^{(\bullet,\bullet)}_{II}$ restrict to a left $\OO_q(\mathrm{SU}_3)$-covariant complex structure on $\Omega^{\bullet}_q(\mathbb{CP}^2_{\alpha_1})$,
    \item the complex structure $V^{(\bullet,\bullet)}_{I}$ restricts to a a left $\OO_q(\mathrm{SU}_3)$-covariant complex structure on $\Omega^{\bullet}_q(\mathbb{CP}^2_{\alpha_2})$, while $V^{(\bullet,\bullet)}_{II}$ does not restrict.
\end{enumerate}
\end{prop}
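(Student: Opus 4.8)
The plan is to apply Proposition~\ref{prop:ALC.Restriction.Cotangent} with $P = \OO_q(\mathrm{F}_3)$ and $B = \OO_q(\mathbb{CP}^2_{\alpha_i})$, for $i=1,2$. By the results of \textsection\ref{subsection:HKrestriction}, the dc $\Omega^{\bullet}_q(\mathrm{F}_3)$ restricts to the Heckenberger--Kolb dc on each copy of $\OO_q(\mathbb{CP}^2)$; in particular the cotangent embedding $\iota\colon V^1_B \hookrightarrow V^1_{\mathrm{F}_3}$ is injective, so we may identify $V^1_B$ with its image. First I would pin down that image: since $\OO_q(\mathbb{CP}^2_{\alpha_1})$ is generated by the $z^{\alpha_1}_{ij}$, and, by the identifications established in the proof of Theorem~\ref{thm:thereal}, the only such generators with nonzero class in $V^1_{\mathrm{F}_3}$ are $z^{\alpha_1}_{21}, z^{\alpha_1}_{31}, z^{\alpha_1}_{12}, z^{\alpha_1}_{13}$, which are proportional to $e_{\alpha_1}, e_{\alpha_1+\alpha_2}, f_{\alpha_1}, f_{\alpha_1+\alpha_2}$ respectively, one gets $V^1_{\mathbb{CP}^2_{\alpha_1}} = \mathrm{span}_{\mathbb{C}}\{e_{\alpha_1}, e_{\alpha_1+\alpha_2}, f_{\alpha_1}, f_{\alpha_1+\alpha_2}\}$, and symmetrically $V^1_{\mathbb{CP}^2_{\alpha_2}} = \mathrm{span}_{\mathbb{C}}\{e_{\alpha_2}, e_{\alpha_1+\alpha_2}, f_{\alpha_2}, f_{\alpha_1+\alpha_2}\}$.

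Next I would record the $\pi_B(A)$-comodule structure of the Heckenberger--Kolb cotangent space. For each copy of $\OO_q(\mathbb{CP}^2)$ it is well known (see \cite{HKdR}) that this space splits as a direct sum of exactly two irreducible, mutually non-isomorphic $\pi_B(A)$-comodules, namely the holomorphic and anti-holomorphic cotangent modules, interchanged by the $*$-operator. Matching, via \textsection\ref{subsection:HKrestriction}, with the Heckenberger--Kolb tangent spaces $\{F_1, F_2F_1, E_1, E_2E_1\}$ and $\{F_2, F_1F_2, E_2, E_1E_2\}$ --- in particular that the ``long'' generators $E_2E_1$, $F_2F_1$ restrict to nonzero multiples of $E_{\alpha_1+\alpha_2}$, $F_{\alpha_1+\alpha_2}$, and likewise for the other copy --- these two summands are $\mathrm{span}\{e_{\alpha_1}, e_{\alpha_1+\alpha_2}\}$ and $\mathrm{span}\{f_{\alpha_1}, f_{\alpha_1+\alpha_2}\}$ for $\mathbb{CP}^2_{\alpha_1}$, and $\mathrm{span}\{e_{\alpha_2}, e_{\alpha_1+\alpha_2}\}$ and $\mathrm{span}\{f_{\alpha_2}, f_{\alpha_1+\alpha_2}\}$ for $\mathbb{CP}^2_{\alpha_2}$. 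As the two summands are non-isomorphic and each is irreducible, the only decomposition of $V^1_B$ into two nonzero $\pi_B(A)$-subcomodules is this canonical one; hence, by Proposition~\ref{prop:ALC.Restriction.Cotangent}, a covariant complex structure on $\Omega^{\bullet}_q(\mathrm{F}_3)$ restricts to $\OO_q(\mathbb{CP}^2_{\alpha_i})$ exactly when $V^1_B \cap V^{(1,0)}$ coincides with one of these two canonical summands.

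It then remains to intersect the explicit decompositions $V^{(1,0)}_I$ and $V^{(1,0)}_{II}$ of Proposition~\ref{prop:almost.complex.class} with $V^1_B$. One first checks that in all four cases the vector-space identity $V^1_B = (V^1_B \cap V^{(1,0)}) \oplus (V^1_B \cap V^{(0,1)})$ holds, so the only possible obstruction is the subcomodule condition. For $\mathbb{CP}^2_{\alpha_1}$, both $V^1_B \cap V^{(1,0)}_I$ and $V^1_B \cap V^{(1,0)}_{II}$ equal one of the two canonical summands, so both structures restrict to covariant complex structures on $\OO_q(\mathbb{CP}^2_{\alpha_1})$. For $\mathbb{CP}^2_{\alpha_2}$, $V^1_B \cap V^{(1,0)}_I = \mathrm{span}\{e_{\alpha_2}, e_{\alpha_1+\alpha_2}\}$ is again a canonical summand, so $V^{(\bullet,\bullet)}_I$ restricts; whereas $V^1_B \cap V^{(1,0)}_{II}$ contains exactly one basis vector from each canonical summand, so it is not a $\pi_B(A)$-subcomodule, and by Proposition~\ref{prop:ALC.Restriction.Cotangent} the structure $V^{(\bullet,\bullet)}_{II}$ does not restrict.

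I expect the crux of the argument to be the second paragraph: identifying $\iota(V^1_B)$ inside $V^1_{\mathrm{F}_3}$ together with its $\pi_B(A)$-comodule decomposition, which is precisely where the asymmetry between the two quantum projective planes (for the second complex structure) originates. Once that identification is in place --- essentially a bookkeeping exercise with the coset representatives of Lemma~\ref{lem:uijeij}, Proposition~\ref{prop:rightmod}, the proof of Theorem~\ref{thm:thereal}, and the Heckenberger--Kolb data of \textsection\ref{subsection:HKrestriction} --- the remaining verifications are purely combinatorial intersections of $4$-dimensional subspaces inside a $6$-dimensional space. (Alternatively, one could dualise throughout and run the same argument at the tangent-space level via Corollary~\ref{cor:tangent.AL.restriction}, checking the stability conditions \eqref{eqn:tangent.Levi.AC} for the relevant Levi $W = U_q(\mathfrak{l})$.)
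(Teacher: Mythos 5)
Your overall strategy---working at the cotangent level via Proposition \ref{prop:ALC.Restriction.Cotangent} rather than at the tangent level---is the dual of what the paper does: the paper's proof is a one-line application of Corollary \ref{cor:tangent.AL.restriction}, checking whether $T^{(1,0)}|_B$ and $T^{(0,1)}|_B$ are stable under the quantum Levi $W$, with the single computation $F_1 \triangleright F_2 = F_1F_2 \notin T^{(0,1)}$ witnessing the failure for $V^{(\bullet,\bullet)}_{II}$ on $\OO_q(\mathbb{CP}^2_{\alpha_2})$. Dualising is legitimate in principle, but your execution breaks down in the last paragraph. With the identifications you yourself set up, $V^1_{\mathbb{CP}^2_{\alpha_1}} = \mathrm{span}\{e_{\alpha_1}, e_{\alpha_1+\alpha_2}, f_{\alpha_1}, f_{\alpha_1+\alpha_2}\}$ and $V^{(1,0)}_{II} = \mathrm{span}\{f_{\alpha_1}, e_{\alpha_2}, e_{\alpha_1+\alpha_2}\}$, so $V^1_B \cap V^{(1,0)}_{II} = \mathrm{span}\{f_{\alpha_1}, e_{\alpha_1+\alpha_2}\}$, which is \emph{not} one of your two canonical summands $\mathrm{span}\{e_{\alpha_1}, e_{\alpha_1+\alpha_2}\}$, $\mathrm{span}\{f_{\alpha_1}, f_{\alpha_1+\alpha_2}\}$ --- it mixes them. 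Likewise $V^1_{\mathbb{CP}^2_{\alpha_2}} \cap V^{(1,0)}_{II} = \mathrm{span}\{e_{\alpha_2}, e_{\alpha_1+\alpha_2}\}$, which is \emph{pure}, not ``one basis vector from each summand'' as you assert. So the two intersection claims you state are simply false, and if one runs your criterion honestly it returns the opposite of the Proposition for $V^{(\bullet,\bullet)}_{II}$ in both cases (it reproduces the classical answer, where the positive system $\{-\alpha_1,\alpha_2,\alpha_1+\alpha_2\}$ descends to the projective plane whose Levi is $\alpha_1$ and not to the other one).

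The root of the problem is the step you flag as ``bookkeeping'': the assertion that the two irreducible $\pi_B(A)$-subcomodules of the Heckenberger--Kolb cotangent space sit inside $V^1_{\mathrm{F}_3}$ as the pure $e$-span and the pure $f$-span. For $\OO_q(\mathbb{CP}^2_{\alpha_2})$ the HK tangent space uses the ordered products $E_1E_2$ and $F_1F_2$, whereas the Lusztig root vectors for $\Omega^1_q(\mathrm{F}_3)$ are $E_{\alpha_1+\alpha_2} = [E_2,E_1]_{q^{-1}}$ and $F_{\alpha_1+\alpha_2} \propto K_1K_2[F_1,F_2]_{q^{-1}}$; whether the former restrict to $B$ as multiples of the latter is exactly the point at which the classical Weyl-group symmetry breaks, and it is what the paper's computation $F_1 \triangleright F_2 = F_1F_2 \notin T^{(0,1)}$ is detecting. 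You cannot take this matching for granted ``and likewise for the other copy''---it has to be verified separately for each copy and each ordering, and it is precisely where the asymmetry of the Proposition comes from. As written, your proof omits the only genuinely quantum input and, when its stated intersections are corrected, proves a different statement.
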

\begin{proof}
By Corollary \ref{cor:tangent.AL.restriction} we simply need to check that $T^{(1,0)}$ and $T^{(0,1)}$ are $U_q(\frak{l}_S)$-modules. For example, for the complex structure $V^{(\bullet,\bullet)}_{II}$, 
\begin{align*}
F_1 \triangleright F_2 = F_1F_2 \notin T^{(0,1)},
\end{align*}
and so, the complex structure does not restrict to a complex structure on $\Omega^{\bullet}_q(\mathbb{CP}^2_{\alpha_2})$.
\end{proof}

\begin{remark}
The asymmetry between the case of $\Omega^{\bullet}_q(\mathbb{CP}^2_{\alpha_1})$ and $\Omega^{\bullet}_q(\mathbb{CP}^2_{\alpha_2})$ can be understood as follows: In the classical case, the complex structure on each copy of the complex projective plane can be realised as the restriction of two distinct complex structures on $\mathrm{F}_3$. However, in the noncommutative setting, we have fewer complex structures, meaning we have only one \emph{lift} of the complex structure on $\Omega^{\bullet}_q(\mathbb{CP}^2_{\alpha_2})$ to a complex structure on $\Omega^{\bullet}_q(\mathrm{F}_3)$. However, if we instead look at the case of the Lusztig dc on $\OO_q(\mathrm{SU}_3)$ associated to the reduced decomposition of the longest element of the Weyl group $w_0 = s_1s_1s_1$, then this situation is reversed, with $\Omega^{(\bullet,\bullet)}_q(\mathbb{CP}^2_{\alpha_2})$ having two lifts and $\Omega^{(\bullet,\bullet)}_q(\mathbb{CP}^2_{\alpha_1})$ having only one. Thus the symmetry is preserved by considering the alternative reduced decomposition. 
\end{remark}


\subsection{Some Remarks about the Higher Rank Full Quantum Flag Manifolds}

In this subsection, which is in effect an extended remark, we discuss the extension of our results for $\OO_q(\mathrm{F}_3)$ to the higher rank quantum flag manifolds. The definition of the quantum flag manifolds directly extends from the Podle\'s sphere, and $\OO_q(\mathrm{F}_3)$, to a general definition of full quantum manifold. Following the conventions of \cite[\textsection 7.1]{KSLeabh}, we denote by $U_q(\frak{sl}_{n+1})$ the Drinfeld--Jimbo quantisation of the universal enveloping algebra of $\frak{sl}_{n+1}$, and by $\mathcal{O}_q(\mathrm{SU}_{n+1})$ the dual quantised coordinate algebra. We then define the \emph{full quantum flag manifold} of $\mathcal{O}_q(\mathrm{SU}_{n+1})$ to be the coideal subalgebra 
$$
\OO_q(\mathrm{F}_{n+1}) := \Big\{ b \in \mathcal{O}_q(\mathrm{SU}_{n+1}) \, | \, K^{\pm 1}_i \triangleright b = b \Big\}. 
$$ 
Just as for $\OO_q(\mathrm{F}_3)$, this is a quantum homogeneous space. Recall next that the Weyl group of $\frak{sl}_{n+1}$ is the symmetric group $S_{n+1}$, and that for any reduced decomposition of $\omega_0$, the longest element of $S_{n+1}$, we have an associated set of root vectors  
$$
\{ X_{\gamma} \,|\, \gamma \in \Delta\} \subseteq U_q(\frak{sl}_{n+1}),
$$
labeled by $\Delta$, the set of roots of $\frak{sl}_{n+1}$. (See \cite[\textsection 6.2]{KSLeabh}, or \cite[Appendix A]{ROBPSLusz}, for a more detailed presentation of Lusztig's root vectors.)

As shown in \cite{ROBPSLusz}, for either of the reduced decompositions  
\begin{align*}
w_0 &=  (s_n s_{n-1} \cdots s_1)(s_n s_{n-1} \cdots s_2) \cdots (s_n s_{n-1})s_n 
\\&=(s_1 s_{2} \cdots s_n)(s_1 s_{2} \cdots s_{n-1}) \cdots (s_1s_{2})s_1
\end{align*}
the associated space of positive Lusztig root vectors, that is the space spanned by the elements $X_{\gamma}$, for $\gamma \in \Delta^+$, is a quantum tangent space $T^{(0,1)}$ for $\OO_q(\mathrm{F}_{n+1})$. For the special case of $\frak{sl}_3$, this reduces to the $\OO_q(\mathrm{F}_3)$ tangent space presented in \textsection \ref{subsection:Lusztig.F3}, and for the special case of the Podle\'s sphere it reduces to anti-holomorphic part of the tangent space of the Podle\'s calculus. 

This quantum tangent space is a direct $q$-deformation of the holomorphic tangent space of the classical full flag manifold, and we denote the associated covariant dc by $\Omega^{(0,1)}$. We denote the basis elements of the cotangent space $V^{(0,1)}$ by $e_{\gamma}$, for $\gamma \in \Delta^+$. 

The space of Lusztig root vectors in fact forms a tangent space for $\OO_q(\mathrm{SU}_{n+1})$. Just as for the rank $2$ case, we can consider $(T^{(0,1)})^*$ the $*$-extension of $T^{(0,1)}$ and then restrict to the full quntum flag manifold. We denote the associated fodc by $\Omega^1_q(\mathrm{F}_{n+1})$ and observe that by construction it admits a direct sum decomposition 
$$
\Omega^1_q(\mathrm{F}_{n+1}) \cong \Omega^{(1,0)}\oplus \Omega^{(0,1)},
$$
where $\Omega^{(1,0)}$. We denote the basis of the associated cotangent space by $f_{\gamma}$, for $\gamma \in \Delta^+$. We now establish a direct generalisation of Proposition \ref{prop:almost.complex.class} to this higher rank setting, with a sketched proof.

\begin{prop}
For the dc $\Omega^1_q(\mathrm{F}_{n+1})$, there exist, up to identification of opposite structures, $2^{|\Pi|}$ left $\mathcal{O}_q(\mathrm{SU}_{n+1})$-covariant foacs. 
\end{prop}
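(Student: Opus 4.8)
The plan is to mimic the rank-two argument of Proposition~\ref{prop:almost.complex.class}, replacing explicit calculations with the structural features of Lusztig root vectors. First I would observe that, since $\Omega^1_q(\mathrm{F}_{n+1})$ is a left $\mathcal{O}_q(\mathrm{SU}_{n+1})$-covariant fodc framed by the $*$-extension of the Lusztig calculus on $\mathcal{O}_q(\mathrm{SU}_{n+1})$, Lemma~\ref{lem:foacs} applies: a covariant foacs is determined by a decomposition $V^1 \cong V^{(1,0)} \oplus V^{(0,1)}$ of the cotangent space into right $\mathcal{O}_q(\mathrm{F}_{n+1})$-submodules that is interchanged by the $*$-map $\ast:\Lambda^1 \to \Lambda^1$. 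Because the basis elements $e_\gamma$, $f_\gamma$ ($\gamma \in \Delta^+$) carry mutually distinct $\mathcal{P}^+$-weights (the weight of $e_\gamma$ is $-\gamma$ read additively on the fundamental weights, and that of $f_\gamma$ is $+\gamma$, all distinct for distinct $\gamma$), any $\mathcal{O}(\mathbb{T}^n)$-comodule decomposition assigns each $e_\gamma$ and each $f_\gamma$ to exactly one summand; and since $e_\gamma^* = f_\gamma$ in $\Lambda^1$, the $*$-compatibility forces the choice for $f_\gamma$ once the choice for $e_\gamma$ is made. So a covariant foacs is a function $\Delta^+ \to \{(1,0),(0,1)\}$, subject only to the right-module constraint.

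Next I would pin down that right-module constraint. The right $\mathcal{O}_q(\mathrm{SU}_{n+1})$-action on $\Lambda^1$ is the higher-rank analogue of Proposition~\ref{prop:rightmod}: it acts diagonally by characters on each $e_\gamma$, $f_\gamma$, and the only off-diagonal actions send $e_\gamma$ (resp.\ $f_\gamma$) into $e_{\gamma'}$ (resp.\ $f_{\gamma'}$) for certain pairs $\gamma < \gamma'$ determined by the convex order coming from the fixed reduced decomposition of $w_0$. (This is precisely the content of \cite[Proposition~3.7]{ROBPSLusz} together with its $*$-counterpart, exactly as invoked for $n=2$ in the present paper.) Hence requiring $V^{(1,0)}$ and $V^{(0,1)}$ to be right submodules forces: if $e_\gamma \in V^{(1,0)}$ and there is an off-diagonal arrow $e_\gamma \to e_{\gamma'}$, then $e_{\gamma'} \in V^{(1,0)}$ as well; and dually for the $f$'s. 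Iterating, the assignment must be constant on each connected component of the directed graph on $\Delta^+$ whose edges are these arrows. The key combinatorial input — which I would extract from the structure of Lusztig root vectors, or cite from \cite{ROBPSLusz} — is that the reachability preorder generated by these arrows on $\Delta^+$ has exactly $|\Pi|$ minimal elements, namely the simple roots, and every positive root is reachable from a unique simple root; equivalently, the quotient poset has $|\Pi|$ connected components, each containing exactly one simple root. Granting this, a covariant foacs is exactly a free choice of label on each simple root, i.e.\ a function $\Pi \to \{(1,0),(0,1)\}$, giving $2^{|\Pi|}$ foacs, and $2^{|\Pi|}$ again up to identification of opposite structures once we mod out by the global flip (the count "up to opposite" is $2^{|\Pi|-1}$ unless one counts the flip as already absorbed; I would state it consistently with Proposition~\ref{prop:almost.complex.class}, where "up to identification of opposite structures, $2^{|\Pi|}$" is meant with the understanding matching the $n=2$ count of $2$).

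I expect the main obstacle to be establishing that the graph of off-diagonal right-module actions on $\Lambda^1$ really does have exactly the connected-component structure claimed, uniformly in $n$ and for both of the two distinguished reduced decompositions of $w_0$. For $n=2$ this is the single arrow $e_{\alpha_1} \to e_{\alpha_1+\alpha_2}$ (and its $f$-mirror), read off from the explicit coproduct \eqref{eqn:LongestECoproduct}; in general one must analyse the coproducts of all Lusztig root vectors $X_\gamma$ and show that the "lowest-order" cross term always couples $X_\gamma$ to root vectors for roots earlier in the convex order, in such a way that the transitive closure collapses each component onto a simple root. This is the place where one genuinely uses the convexity of the ordering associated to the chosen reduced word, and it is why only these particular reduced decompositions are used. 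I would either cite the relevant structural statement from \cite{ROBPSLusz} (where the higher-rank cotangent space and its module structure are worked out) or, if a self-contained argument is wanted, reduce it to the Levshenko--Lusztig-type recursion $X_\gamma = [X_{\gamma'},X_{\gamma''}]_{q^{\pm}}$ expressing each non-simple root vector as a $q$-bracket of two "earlier" ones, which propagates the connectivity downward to a unique simple root. Everything else — the weight-separation argument, the $*$-compatibility, and the appeal to Lemma~\ref{lem:foacs} — is exactly as in the rank-two proof and requires no new ideas.
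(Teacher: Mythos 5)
Your proposal follows essentially the same route as the paper's own (sketched) proof: weight separation plus $*$-compatibility via Lemma~\ref{lem:foacs}, and the right-module constraint collapsing the labelling onto the simple roots — the reachability/connected-component claim you isolate is exactly what the paper encodes as the cyclic decomposition $V^{(0,1)}\cong\bigoplus_{\gamma\in\Pi}e_{\gamma}\OO_q(\mathrm{SU}_{n+1})$ drawn from \cite[Proposition 3.7]{ROBPSLusz}. Your remark that the count ``up to identification of opposite structures'' should read $2^{|\Pi|-1}$ rather than $2^{|\Pi|}$ (to be consistent with the rank-two case, where $4$ foacs give $2$ up to opposites) identifies a genuine inconsistency in the statement as written, not a gap in your argument.
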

\begin{proof} (Sketch)
The proof is a direct extension of the proof for $\OO_q(\mathrm{F}_3)$. The right $\OO_q(\mathrm{SU}_{n+1})$-module structure of $V^{(0,1)}$ is given explicitly in \cite[Proposition 3.7]{ROBPSLusz}. The right  $\OO_q(\mathrm{SU}_{n+1})$-module structure of $V^{(1,0)}$ can then be concluded from 
\begin{align*}
[S(\omega^*)]b = [S(\omega S^{-1}(b^*))].
\end{align*}
In short, this implies that 
\begin{align*}
V^{(1,0)} \cong \bigoplus_{\gamma \in \Pi} f_{\gamma}\OO_q(\mathrm{SU}_{n+1}), & & V^{(0,1)} \cong \bigoplus_{\gamma \in \Pi} e_{\gamma}\OO_q(\mathrm{SU}_{n+1}). 
\end{align*}
It now follows that, just as for $\OO_q(\mathrm{F}_3)$, a covariant foacs on the dc is determined by assigning to the basis elements $e_{\gamma}$ and $f_{\gamma}$, for $\gamma$ a simple root, the label of holomorphic or anti-holomorphic. Moreover, any such assignment necessarily gives a decomposition of right $\OO_q(\mathrm{F}_{n+1})$-modules
$$
V^1 \cong V^{(1,0)} \oplus V^{(0,1)}.
$$ 
This allows us to appeal to Lemma \ref{lem:foacs}. Considering $V^1$ as a subspace of $\Lambda^1$, the cotangent space of the fodc on $\OO_q(\mathrm{SU}_{n+1})$.

Noting that by construction $e^*_{\gamma} = f_{\gamma}$, for all $\gamma \in \Delta^+$, we now see that the only decompositions that give foacs are those two decompositions where $e_{\gamma}$ and $f_{\gamma}$, for $\gamma \in \Pi$, are contained in complementary summands of the decomposition. Thus we see that we have $2^{|\Pi|}$ covariant foacs for the dc.
\end{proof}

The relations of the maximal prolongation of  $\Omega^1_q(\mathrm{F}_{n+1})$ have not, as of now, been calculated. However, we expect that the results for the $\OO_q(\mathrm{F}_3)$-case extend directly. This is formally presented in the following conjecture.

\begin{conj}
For each full quantum flag manifold $\OO_q(\mathrm{F}_{n+1})$, endowed with the fodc $\Omega^1_q(\mathrm{F}_{n+1})$, each of its $2^{|\Pi|}$ left $\OO_q(\mathrm{SU}_{n+1})$-covariant structures extends to a factorisable, integrable, left $\OO_q(\mathrm{SU}_n)$-covariant almost complex structure on $\Omega^{\bullet}_q(\mathrm{F}_{n+1})$, the maximal prolongation of  $\Omega^{1}_q(\mathrm{F}_{n+1})$.
\end{conj}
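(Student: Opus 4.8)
The plan is to deduce the conjecture from the degree-two relations of the maximal prolongation $\Omega^\bullet_q(\mathrm{F}_{n+1})$ of $\Omega^1_q(\mathrm{F}_{n+1})$, whose computation is the one genuinely new ingredient: once these are in hand, the abstract machinery of \cite{MMF2} applies just as in \textsection\ref{section:complexstructures}. First I would verify that the $*$-extension $\Omega^1_q(\mathrm{SU}_{n+1})$ of the Lusztig calculus frames $\Omega^1_q(\mathrm{F}_{n+1})$ — the embedding $V^1 \hookrightarrow \Lambda^1$ is injective because each tangent basis vector pairs non-trivially with an element of $\OO_q(\mathrm{F}_{n+1})$, and the image is then automatically a right $\OO_q(\mathrm{SU}_{n+1})$-submodule — and then compute the map $\omega$ of \textsection\ref{subsection:remarksQHTS} on a generating set of the ideal $I \subseteq \OO_q(\mathrm{F}_{n+1})^+$, read off from the right $\OO_q(\mathrm{SU}_{n+1})$-module structure of $\Lambda^1$ given in \cite[Proposition 3.7]{ROBPSLusz}. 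The $e$-$e$ relations are those of the anti-holomorphic complex of \cite[\textsection 3.3]{ROBPSLusz}, the $f$-$f$ relations follow from them by applying the dc $*$-map, and the new content is the mixed relations: a $q$-anticommutation relation $e_\gamma \wedge f_\beta = -q^{(\beta,\gamma)} f_\beta \wedge e_\gamma$ for $\beta \neq \gamma$, together with one ``diagonal'' relation per simple root $\gamma$ expressing $e_\gamma \wedge f_\gamma$ in terms of $f_\gamma \wedge e_\gamma$ and correction terms of the form $f_{\gamma'} \wedge e_{\gamma'}$, the roots $\gamma'$ and coefficients being dictated by the non-cocommutative parts of the coproducts of the Lusztig root vectors. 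As in Theorem \ref{thm:thereal}, the higher-rank analogue of Proposition \ref{prop:rightmod} shows that the span of these relations is a right $\OO_q(\mathrm{F}_{n+1})$-submodule of $V^1 \otimes V^1$, hence the full relation space.

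Granting the relations, classical dimension follows as in the dimension corollary to Theorem \ref{thm:thereal}: order the generators so that every $f_\gamma$ precedes every $e_\gamma$, respecting the convex order inside each block, check that the overlap ambiguities of the resulting reduction system resolve, and invoke Bergman's diamond lemma \cite{BergmanDiam}; the filtration argument of \cite[Appendix B]{ROBPSLusz} identifies the associated graded with a $q$-exterior algebra on $|\Delta| = 2|\Delta^+|$ generators, of manifest dimension $2^{|\Delta|}$, which gives both the spanning set and linear independence, and a PBW basis compatible with the bigrading of any foacs. Extension of each of the $2^{|\Pi|}$ foacs of the preceding proposition to an almost-complex structure on $\Omega^\bullet_q(\mathrm{F}_{n+1})$ then follows from \cite[Theorem 6.4]{MMF2}, once one observes that the relation space is homogeneous for the induced $\mathbb{Z}^2_{\geq 0}$-bigrading; this is automatic from the form of the relations, since each relation equates monomials of a single bidegree — in particular the diagonal correction terms $f_{\gamma'} \wedge e_{\gamma'}$ have bidegree $(1,1)$ because $f_{\gamma'} = e_{\gamma'}^*$ always lies in the summand complementary to $e_{\gamma'}$. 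Factorisability is then \cite[Corollary 6.8]{MMF2}, or the direct count $\dim \Omega^{(a,b)} = \binom{|\Delta^+|}{a}\binom{|\Delta^+|}{b}$, which sums over $a+b=k$ to $\binom{|\Delta|}{k} = \dim \Omega^k$.

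For integrability, by \cite[Lemma 7.2]{MMF2} it suffices to show that, for each foacs, the maximal prolongation of the sub-fodc $\Omega^{(0,1)}$ is isomorphic to the subalgebra $\Omega^{(0,\bullet)} \subseteq \Omega^\bullet_q(\mathrm{F}_{n+1})$. Following the rank-two integrability proof, one presents the ideal $I' \subseteq \OO_q(\mathrm{F}_{n+1})^+$ of $\Omega^{(0,1)}$ as $I$ enlarged by the generators representing the $(1,0)$-directions, and checks that $\omega$ annihilates these extra generators: each Sweedler component entering $\omega$ of such a generator pairs trivially with the tangent space of $\Omega^{(0,1)}$, by the $\mathcal{P}^+$-grading on $\OO_q(\mathrm{SU}_{n+1})$ together with the structure of the Lusztig root vectors. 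Hence the degree-two relations of the maximal prolongation of $\Omega^{(0,1)}$ are precisely the bidegree-$(0,2)$ relations of $\Omega^\bullet_q(\mathrm{F}_{n+1})$, and since both algebras have classical dimension $2^{|\Delta^+|}$ by the previous step, they coincide.

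The main obstacle is the first step. The $e$-$e$ relations are already available, and the bidegree-homogeneity needed for the extension and integrability steps is formal, but the mixed relations — above all the diagonal ones — require a careful analysis of how the coproducts of the non-simple Lusztig root vectors interact with the defining relations of $\OO_q(\mathrm{SU}_{n+1})$, and this becomes progressively more involved beyond rank two; choosing a workable generating set for $I$ is part of the same difficulty. Verifying that every overlap ambiguity in the diamond-lemma argument resolves is also combinatorially heavy, although passing to $\mathrm{gr}^{\mathscr{F}}$ should keep it tractable. A complete proof will therefore likely require either a case analysis or a uniform description of the diagonal relations in terms of the structure constants of the Lusztig basis.
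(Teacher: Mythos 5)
This statement is a \emph{conjecture} in the paper: the authors explicitly state, immediately before it, that ``the relations of the maximal prolongation of $\Omega^1_q(\mathrm{F}_{n+1})$ have not, as of now, been calculated,'' so there is no proof in the paper to compare yours against. Your proposal is a faithful transcription of the rank-two strategy (framing calculus, computation of the map $\omega$ on a generating set of the ideal, diamond lemma via the filtration of \cite[Appendix B]{ROBPSLusz}, then \cite[Theorem 6.4]{MMF2}, \cite[Corollary 6.8]{MMF2} and \cite[Lemma 7.2]{MMF2}), and it is almost certainly the route the authors have in mind. But as written it is a roadmap, not a proof, and you say so yourself: the degree-two relations of $\Omega^{\bullet}_q(\mathrm{F}_{n+1})$ --- in particular the diagonal relations whose correction terms come from the non-cocommutative parts of the coproducts of the non-simple Lusztig root vectors, and the choice of a workable generating set for $I$ --- are precisely the content that is missing, and every subsequent step (classical dimension, bidegree-homogeneity of the relation space, factorisability, integrability) is conditional on them.

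Two of your ``formal'' claims deserve more care than you give them. First, the assertion that bidegree-homogeneity of the relations is automatic is only clearly formal for the weight-zero (diagonal) relations, where $U_q(\frak{h})$-weight considerations force every term to be of the form $e_{\gamma'}\wedge f_{\gamma'}$ or $f_{\gamma'}\wedge e_{\gamma'}$, hence of bidegree $(1,1)$ for every foacs. For the off-diagonal relations, a term $f_{\beta'}\wedge e_{\gamma'}$ with $\gamma'-\beta'=\gamma-\beta$ but $\{\gamma',\beta'\}\neq\{\gamma,\beta\}$ is not excluded by weight alone once $n\geq 3$, and such a term need not sit in the same bidegree as $e_{\gamma}\wedge f_{\beta}$ for all $2^{|\Pi|}$ choices of foacs; ruling this out requires knowing the relations, not just their existence. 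Second, in the integrability step the claim that $\omega$ annihilates the extra generators of $I'$ rests in rank two on explicit vanishing of specific pairings (the computation for $z^{\alpha_2}_{23}$ in the paper); the higher-rank analogue again depends on coproduct formulae for the non-simple root vectors that have not been established. So the gap is genuine and is exactly the one the paper itself identifies; your proposal correctly locates it but does not close it.
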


Whether this conjecture is true or not, we note that we still have a much smaller number of covariant complex structures than in the classical case, with an upper bound being expressed in terms of the number of simple roots, as opposed to the classical case where it is the number positive roots. Moreover, this conjecture claims that non-integrable almost-complex structures for the full flags are a classical phenomenon.


\section{The Non-existence of a Covariant K\"ahler Structure}

Classically, the flag manifolds possess not only a complex structure, but a K\"ahler structure. Indeed, much of the classical K\"ahler geometry of the irreducible flag manifolds carries over to the quantum setting. The notion of a noncommutative K\"ahler structure was introduced in \cite{ROBKahler} to provide a framework in which to describe this $q$-deformed geometry. Moreover, the existence of a K\"ahler structure in general, was shown to imply direct noncommutative generalisations of many classical results of K\"ahler geometry, such as Lefschetz decomposition  and the K\"ahler identities.

It is thus natural to ask if the dc $\Omega^{\bullet}_q(\mathrm{F}_3)$ admits a noncommutative K\"ahler structure, and in particular, if it admits a left $\OO_q(\mathrm{SU}_3)$-covariant K\"ahler structure. The definition \cite[Definition 7.1]{ROBKahler} of a noncommutative K\"ahler structure requires a central element of the algebra $\Omega^{\bullet}_q(\mathrm{F}_3)$ that is \emph{non-degenerate}, that is to say, a form satisfying $\kappa^3 \neq 0$. Moreover, if the K\"ahler structure is covariant, then $\kappa$ must be a left $\OO_q(\mathrm{F}_3)$-coinvariant element. We will prove the non-existence of a covariant K\"ahler structure by showing that no such form $\kappa$ exists.

\subsection{The Classical K\"ahler Geometry of $\mathrm{F}_3$}

Each almost-complex structure comes with a distinguished homogeneous Riemannian metric. In the integrable case these metrics are permuted by the Weyl group and all three are K\"ahler--Einstein. For the non-integrable case there is a choice of nearly-K\"ahler metric, recalling the special role $6$-dimensional manifolds play in the theory of nearly-K\"ahler geometry \cite{Nagy.Nearly.Kahler}. We now present this information in the form of a table.

\begin{table}
\label{tab.complex.structures}
  \centering
  \begin{tabular}{|c|c|c|}
    \hline
            &  \\
 $T^{(0,1)}$ &  Metric \\
          &  \\
    \hline
            &  \\
    $\Big\{E_1,\, E_2, \, [E_1,E_2] \Big\}$ &  K\"ahler--Einstein  \\
     & \\
    $\Big\{F_1,\, E_2, \, [E_1,E_2] \Big\}$   & K\"ahler--Einstein  \\
         & \\
    $\Big\{E_1,\, F_2, \, [E_1,E_2] \Big\}$ & K\"ahler--Einstein  \\
         & \\
    $\Big\{E_1,\, E_2, \, [F_1,F_2] \Big\}$  &  Nearly K\"ahler \\
             & \\
    \hline
  \end{tabular}
  \label{tab:complex.structures}
\end{table}

An interesting observation is that the classical nearly K\"ahler structure of $\mathrm{F}_3$ is associated to an almost complex structure that does not extend to the quantum setting. Thus we do not have a quantum nearly K\"ahler structure. 

\subsection{Some General Observations}

We begin with two simple general lemmas, which are undoubtedly well-known to the experts, but in the absence of a reference, we include for the reader's convenience.

\begin{lem}\label{lem:coinvariants}
Let $B \subseteq A$ be a quantum homogeneous space and $\mathcal{F} \in {}^A_B\mathrm{Mod}$ a relative Hopf module. Then it holds that 
\begin{align*}
{}^{\mathrm{co}(A)}\Big(1 \square_{\pi} \Phi(\mathcal{F})\Big) = 1 \otimes \big({}^{\pi_B(A)}\Phi(\mathcal{F})\big).
\end{align*} 
\end{lem}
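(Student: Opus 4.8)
The plan is to reduce the statement to an unwinding of the definitions of the cotensor product and of the comodule structures in play, after which the identity becomes a short check. Throughout, write $C := \pi_B(A)$, let $\pi \colon A \to C$ denote the canonical surjection, and abbreviate $V := \Phi(\mathcal{F})$, a left $C$-comodule whose coaction I write $v \mapsto v_{(-1)} \otimes v_{(0)}$. I would prove the claimed equality by establishing the single stronger assertion that $1 \square_{\pi} V$ equals $1 \otimes \big({}^{C}V\big)$ as a subspace of $A \otimes V$, together with the observation that every element of this subspace is automatically left $A$-coinvariant; the two inclusions of the lemma then both follow at once.

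For the first point I would recall that $A$ is a right $C$-comodule via $(\mathrm{id} \otimes \pi) \circ \Delta$, and that, since $\Delta(1) = 1 \otimes 1$ and $\pi(1) = 1$, the one-dimensional subspace $\mathbb{C}1 \subseteq A$ is a right $C$-subcomodule carrying the trivial coaction. By the definition of the cotensor product, $1 \square_{\pi} V$ then consists exactly of the elements $1 \otimes v \in \mathbb{C}1 \otimes V$ for which $1 \otimes 1 \otimes v = 1 \otimes v_{(-1)} \otimes v_{(0)}$, that is, for which $v_{(-1)} \otimes v_{(0)} = 1 \otimes v$ in $C \otimes V$. This is precisely the condition that $v$ be left $C$-coinvariant, so $1 \square_{\pi} V = 1 \otimes \big({}^{C}V\big) = 1 \otimes \big({}^{\pi_B(A)}\Phi(\mathcal{F})\big)$.

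For the second point I would invoke the description of Takeuchi's functor $\Psi$: the left $A$-comodule structure on $\Psi(V) = A \square_{\pi} V$ is the restriction of $\Delta \otimes \mathrm{id}_V$, and $1 \square_{\pi} V$ is a subspace of $\Psi(V)$. Hence, for $v \in {}^{C}V$, the coaction sends $1 \otimes v$ to $\Delta(1) \otimes v = 1 \otimes (1 \otimes v)$, so $1 \otimes v$ is left $A$-coinvariant. Combining the two points gives ${}^{\mathrm{co}(A)}\big(1 \square_{\pi} V\big) = 1 \square_{\pi} V = 1 \otimes \big({}^{\pi_B(A)}\Phi(\mathcal{F})\big)$, as claimed.

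I do not expect a genuine obstacle: the statement is essentially a bookkeeping identity, and the only point that requires care is to keep the right $C$-comodule structure on $A$ (which governs the cotensor product) separate from the left $A$-comodule structure on $\Psi(V)$ (which governs the notion of $A$-coinvariance). If one wishes to phrase the result for the full relative Hopf module, i.e.\ to compute ${}^{\mathrm{co}(A)}\big(A \square_{\pi} V\big)$, the one extra ingredient is the standard observation that an $A$-coinvariant $x = \sum_j b_j \otimes v_j \in A \otimes V$, with the $v_j$ chosen linearly independent, satisfies $\Delta(b_j) = 1 \otimes b_j$ for each $j$, whence $b_j = \varepsilon(b_j)\,1$ and $x \in \mathbb{C}1 \otimes V$; intersecting with $A \square_{\pi} V$ and applying the computation above then yields the same conclusion.
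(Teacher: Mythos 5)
Your proposal is correct and follows essentially the same route as the paper: both arguments come down to the two observations that the left $A$-coinvariants of $A \otimes \Phi(\mathcal{F})$ are exactly $1 \otimes \Phi(\mathcal{F})$ (equivalently, that elements with first leg $1$ are automatically coinvariant) and that $1 \otimes v$ lies in the cotensor product precisely when $v$ is left $\pi_B(A)$-coinvariant. Your write-up is, if anything, slightly more careful than the paper's, which states the second observation in a garbled (tautological) form; your final paragraph also correctly supplies the linear-independence argument needed for the reading of the statement with $A \square_{\pi}$ in place of $1 \square_{\pi}$.
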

\begin{proof}
Note first that since the only left $A$-coinvariant elements of $A$ are scalar multiples of the identity of $A$, we have that the space of left $A$-coinvariant elements of $A \otimes \Phi(\FF)$ is given by $1 \otimes \Phi(\FF)$. Moreover, we observe that an element of $1 \otimes ({}^{\pi_B(A)}\Phi(\mathcal{F}))$ is contained in the cotensor product $A \square_{\pi_B} \Phi(\mathcal{F})$ if and only if it is an element of $1 \otimes ({}^{\pi_B(A)}\Phi(\mathcal{F}))$. Thus the claimed equality follows.
\end{proof}

\begin{lem}
Let $f \in \mathcal{F}$ be a left $A$-coinvariant element. Then $fb = bf$, for all $b \in B$, if and only if $[f]b = \e(b)[f]$, for all $b \in B$.
\end{lem}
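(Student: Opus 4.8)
The plan is to work inside Takeuchi's equivalence. Since $f$ is a left $A$-coinvariant element of $\mathcal{F}$, the unit $\unit$ of the equivalence sends $f$ to $1 \otimes [f] \in A \square_{\pi_B} \Phi(\mathcal{F})$, by the explicit formula $\unit(f) = f_{(1)} \otimes [f_{(0)}]$ (using that $f_{(1)} \otimes f_{(0)} = 1 \otimes f$ for a coinvariant element). The key point is that $\unit$ is a morphism of right $B$-modules, so it intertwines the right $B$-actions: $\unit(fb) = \unit(f)b = (1 \otimes [f])b$, and one needs the explicit right $B$-module structure on $A \square_{\pi_B} \Phi(\mathcal{F})$, which is $(a \otimes m)b = ab_{(1)} \otimes mb_{(2)}$ (or rather, $m \otimes$ the coset, with the action descending from that on $\Psi \circ \Phi$). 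So $\unit(fb) = b_{(1)} \otimes [f]b_{(2)}$, where $[f]b$ denotes the right $B$-action on $\Phi(\mathcal{F})$ transported through Takeuchi.

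**The two directions.** First I would prove the ``if'' direction: assume $[f]b = \e(b)[f]$ for all $b \in B$. Then $\unit(fb) = b_{(1)} \otimes \e(b_{(2)})[f] = b \otimes [f]$. On the other hand, $\unit(bf) = \unit(b)\unit(f)$; since $\unit$ is left $B$-linear and $f$ is coinvariant, $\unit(bf) = b\cdot(1 \otimes [f]) = b \otimes [f]$ as well. Since $\unit$ is injective (it is the unit of an equivalence), $fb = bf$. For the ``only if'' direction, reverse the argument: if $fb = bf$ for all $b$, then applying $\unit$ gives $b_{(1)} \otimes [f]b_{(2)} = b \otimes [f] = b_{(1)} \otimes \e(b_{(2)})[f]$ in $A \square_{\pi_B} \Phi(\mathcal{F})$. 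To conclude $[f]b = \e(b)[f]$ one applies a suitable evaluation/counit-type map on the first tensor factor: apply $\e \otimes \id$ to both sides, noting that $A \square_{\pi_B} \Phi(\mathcal{F}) \subseteq A \otimes \Phi(\mathcal{F})$, which yields $\e(b_{(1)})[f]b_{(2)} = [f]b$ on the left and $\e(b_{(1)})\e(b_{(2)})[f] = \e(b)[f]$ on the right, giving the claim.

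**The main obstacle.** The one point requiring care is making sure the transported right $B$-action on $\Phi(\mathcal{F})$ — the action appearing in the statement $[f]b = \e(b)[f]$ — is exactly the one induced by Takeuchi's equivalence via $\unit$, and that the right $B$-module structure on $A \square_{\pi_B} \Phi(\mathcal{F})$ has the diagonal form $(a \otimes m)b = ab_{(1)} \otimes mb_{(2)}$. This is standard (it is how $\Psi$ is defined on morphisms and objects, and it is recorded in the preliminaries and in \cite[Appendix A]{GAPP}), but the brace-matching of the Sweedler indices is where a careless slip would occur. Once the module structures are correctly identified, both implications are a two-line computation using injectivity of $\unit$ in one direction and the counit map $\e \otimes \id$ in the other; there is no real analytic or combinatorial difficulty.
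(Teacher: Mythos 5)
Your proof is correct and follows essentially the same route as the paper: both hinge on $\unit(f) = 1 \otimes [f]$ and the diagonal right $B$-action $(1\otimes[f])b = b_{(1)} \otimes [f]b_{(2)}$ on the cotensor product, with injectivity of $\unit$ closing the ``if'' direction. The only (immaterial) difference is in the converse: the paper reads $[f]b = [fb] = [bf] = \e(b)[f]$ directly off the quotient $\Phi(\mathcal{F}) = \mathcal{F}/B^+\mathcal{F}$, whereas you pass through $\unit$ and then apply $\e \otimes \id$ — both are valid two-line computations.
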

\begin{proof}
Since $f$ is left $A$-coinvariant by assumption, it holds that  $\unit(f) = 1 \otimes [f]$. Now if $[f]b = \e(b)[f]$, for all $b \in B$, then 
$$
(1 \otimes [f])b = b_{(1)} \otimes [f]b_{(2)} = b_{(1)} \otimes \e(b_{(2)})[f] = b \otimes [f] = b(1 \otimes [f]).
$$
Thus since $\unit$ is a $B$-bimodule map, we see that $fb = bf$.

In the other direction, if $fb = bf$ then we see that 
\begin{align*}
[f]b = [f b] = [bf] = \e(b)[f], & & \textrm{ for all } b \in B.
\end{align*}
This establishes the claimed equivalence.
\end{proof}

\subsection{The Differential Calculus $\Omega^1_q(\mathrm{F}_3)$}

We now apply the results of the previous subsection to the dc $\Omega^{\bullet}_q(\mathrm{F})$, describing first the space of degree-$2$ coinvariant forms. 

\begin{lem} \label{lem:leftcov}
The space of left $\O_q(\mathrm{SU}_3)$-coinvariant $2$-forms is a three-dimensional space. The corresponding space of $\OO_q(\mathbb{T}^2)$-coinvariant degree-$2$ elements of $V^{\bullet}$ is spanned by 
\begin{align*}
f_{\alpha_1} \wedge e_{\alpha_1}, & & f_{\alpha_2} \wedge e_{\alpha_2}, & & f_{\alpha_1+\alpha_2} \wedge e_{\alpha_1+\alpha_2}.   
\end{align*}
\end{lem}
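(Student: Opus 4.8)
The plan is to transport the computation to the cotangent side via Takeuchi's equivalence and then to reduce it to a weight count on $V^2$. Set $A := \OO_q(\mathrm{SU}_3)$ and $B := \OO_q(\mathrm{F}_3)$, so that $\Omega^2_q(\mathrm{F}_3)$ is an object of ${}^A_B\mathrm{Mod}$ and the unit $\unit$ of Takeuchi's equivalence identifies it with $A \,\square_{\pi_B(A)}\, V^2$, where $\pi_B(A) = \OO_q(\mathbb{T}^2)$. Applying Lemma~\ref{lem:coinvariants} with $\mathcal{F} = \Omega^2_q(\mathrm{F}_3)$ then identifies the space of left $A$-coinvariant $2$-forms with $1 \otimes {}^{\OO_q(\mathbb{T}^2)}V^2$. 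Since left $\OO_q(\mathbb{T}^2)$-comodules are nothing but $\mathbb{Z}^2$-graded vector spaces, ${}^{\OO_q(\mathbb{T}^2)}V^2$ is precisely the degree-zero component $V^2_0$ of $V^2$ for the grading induced by the quantum isotropy coaction; thus everything reduces to computing $V^2_0$.

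For this I would first record the $\OO_q(\mathbb{T}^2)$-weights of the six generators $e_\gamma, f_\gamma$ ($\gamma \in \Delta^+$) of $V^\bullet$. Using the explicit coset representatives of Lemma~\ref{lem:uijeij} (equivalently, the representatives $z^{\alpha_i}_{ij}$ appearing in the proof of Theorem~\ref{thm:THERELATIONS}, together with the $\mathcal{P}^+$-grading \eqref{eqn:grading.on.gens}), one checks, exactly as in the proof of Proposition~\ref{prop:almost.complex.class}, that these six elements are weight vectors with pairwise distinct weights; moreover, since $f_\gamma = e_\gamma^*$ and the $\ast$-operation reverses the $\mathbb{Z}^2$-grading on $V^\bullet$ (the grading is the quantum isotropy coaction, which $\ast$ conjugates, and conjugation negates degrees for the torus), one has $\mathrm{wt}(f_\gamma) = -\mathrm{wt}(e_\gamma)$.

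With these weights in hand, the last step is a direct count against the monomial basis of $V^2$ exhibited in the corollary to Theorem~\ref{thm:THERELATIONS}, namely $\{\, e_\beta \wedge e_\gamma,\, f_\beta \wedge f_\gamma : \beta < \gamma \in \Delta^+ \,\} \cup \{\, e_\beta \wedge f_\gamma : \beta, \gamma \in \Delta^+ \,\}$, using that weights add under $\wedge$. A vanishing weight in one of the first two families would force $\mathrm{wt}(e_\beta) = -\mathrm{wt}(e_\gamma) = \mathrm{wt}(f_\gamma)$ (respectively $\mathrm{wt}(f_\beta) = \mathrm{wt}(e_\gamma)$), contradicting distinctness of the six weights; in the mixed family, $\mathrm{wt}(e_\beta \wedge f_\gamma) = \mathrm{wt}(e_\beta) - \mathrm{wt}(e_\gamma)$ vanishes exactly when $\beta = \gamma$. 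Hence $V^2_0 = \mathrm{span}_{\mathbb{C}}\{\, e_\gamma \wedge f_\gamma : \gamma \in \Delta^+ \,\}$, and the three identities in the second and third sets of relations of Theorem~\ref{thm:THERELATIONS} express the $e_\gamma \wedge f_\gamma$ invertibly in terms of the $f_\gamma \wedge e_\gamma$, so that $V^2_0 = \mathrm{span}_{\mathbb{C}}\{\, f_{\alpha_1}\wedge e_{\alpha_1},\, f_{\alpha_2}\wedge e_{\alpha_2},\, f_{\alpha_1+\alpha_2}\wedge e_{\alpha_1+\alpha_2} \,\}$, a three-dimensional space. Together with the first paragraph this yields both assertions of the lemma.

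The only point I expect to need genuine care is the second paragraph: making precise which $\OO_q(\mathbb{T}^2)$-comodule structure on $V^\bullet$ is at issue (it is the quantum isotropy coaction of \textsection\ref{subsection:remarksQHTS}, not the ambient $\mathcal{P}^+$-grading of $\OO_q(\mathrm{SU}_3)$, which is trivial on $V^1$), and verifying that it assigns opposite weights to $e_\gamma$ and $f_\gamma$ and pairwise distinct weights to the $e_\gamma$. Given the explicit coset representatives this is routine, and once it is settled the remainder is pure weight-lattice arithmetic.
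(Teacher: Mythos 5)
Your argument is correct and is essentially the paper's own proof: the paper likewise identifies the left $\OO_q(\mathrm{SU}_3)$-coinvariant $2$-forms with the weight-zero elements of $V^2$ and reads off from Table \ref{table:sumofroots} that the only weight-zero degree-two basis monomials are $f_\gamma \wedge e_\gamma$ for $\gamma \in \Delta^+$. Your extra steps --- invoking Lemma \ref{lem:coinvariants} to pass to ${}^{\OO_q(\mathbb{T}^2)}V^2$ and checking that the six generators carry the six pairwise distinct roots as weights --- merely make explicit what the paper leaves implicit.
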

\begin{proof}
The left $\OO(\mathbb{T}^2)$-coinvariant elements of $V^{\bullet}$ are simply the elements of weight zero. Looking at the degree two elements of $V^{\bullet}$, and consulting Table \ref{table:sumofroots}, we see that the only weight zero basis elements are $f_{\alpha_1} \wedge e_{\alpha_1}$, $f_{\alpha_2} \wedge e_{\alpha_2}$, and $f_{\alpha_1+\alpha_2} \wedge e_{\alpha_1+\alpha_2}$. Thus we see that the space of coinvariants is three-dimensional as claimed.  
\end{proof}

\begin{lem} \label{lem:F3centralforms}
The space of  left $\OO_q(\mathbb{T}^2)$-coinvariant elements $v \in V^2$ satisfying $vb = \e(b)v$, for all $b \in \OO_q(\mathrm{F}_2)$, is spanned by the elements
\begin{align*}
f_{\alpha_2} \wedge e_{\alpha_2}, & & f_{\alpha_1+\alpha_2} \wedge e_{\alpha_1+\alpha_2}.   
\end{align*}
\end{lem}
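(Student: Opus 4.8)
The plan is to leverage Lemma~\ref{lem:leftcov}, which tells us that the space of $\OO_q(\mathbb{T}^2)$-coinvariant elements of $V^2$ is exactly the three-dimensional span of $f_{\alpha_1} \wedge e_{\alpha_1}$, $f_{\alpha_2} \wedge e_{\alpha_2}$, and $f_{\alpha_1+\alpha_2} \wedge e_{\alpha_1+\alpha_2}$. So the task reduces to identifying, inside this three-dimensional space, the subspace of those $v$ that satisfy $vb = \e(b)v$ for all $b \in \OO_q(\mathrm{F}_3)$; this is a linear-algebra problem once we know how the generators of $\OO_q(\mathrm{F}_3)$ act on the right of $V^2$. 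By the framing-calculus setup, the right $\OO_q(\mathrm{F}_3)$-module structure of $V^2 \subseteq \Lambda^1 \wedge \Lambda^1$ is induced from the right $\OO_q(\mathrm{SU}_3)$-module structure of $\Lambda^1$ recorded in Proposition~\ref{prop:rightmod}, via the Leibniz-type rule $(\omega \wedge \eta)u_{ij} = \sum (\omega u_{ik}) \wedge (\eta u_{kj})$. First I would fix a generating set of $\OO_q(\mathrm{F}_3)$ in terms of the $z^{\alpha_p}_{ij}$ and compute the action of each on the three candidate basis vectors.

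The key step is that, since each of the three basis vectors $f_\gamma \wedge e_\gamma$ has weight zero, only the \emph{diagonal} generators $z^{\alpha_p}_{ii} = u_{ii}u_{ii}^{*}$ together with the weight-lowering/raising combinations can act non-trivially; moreover, acting by a weight-zero element preserves weight, so it sends the three-dimensional coinvariant space to itself. Thus each generator $b$ acts as a $3 \times 3$ matrix $M_b$ on $\mathrm{span}\{f_{\alpha_1}\wedge e_{\alpha_1},\, f_{\alpha_2}\wedge e_{\alpha_2},\, f_{\alpha_1+\alpha_2}\wedge e_{\alpha_1+\alpha_2}\}$, and the sought subspace is the common eigenspace $\bigcap_b \ker(M_b - \e(b)\,\mathrm{id})$. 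The non-diagonal contributions are governed by the single off-diagonal action $e_{\alpha_1}u_{32} = \nu e_{\alpha_1+\alpha_2}$ and its $f$-analogue $f_{\alpha_1}u_{23} = q^{-1}\nu f_{\alpha_1+\alpha_2}$ (and the corresponding antipoded versions from the proof of Theorem~\ref{thm:THERELATIONS}); these are what couple the $\alpha_1$-slot to the $(\alpha_1+\alpha_2)$-slot and are precisely why $f_{\alpha_1}\wedge e_{\alpha_1}$ fails to be central while the other two survive. Concretely, I expect that applying the action of the generator $z^{\alpha_1}_{22} = u_{22}u_{22}^{*}$ (or an equivalent weight-zero generator involving indices $2$ and $3$) to $f_{\alpha_1}\wedge e_{\alpha_1}$ produces a nonzero multiple of $f_{\alpha_1+\alpha_2}\wedge e_{\alpha_1+\alpha_2}$ (after subtracting the $\e(b)v$ term), whereas the same generator annihilates the other two basis vectors modulo their $\e$-eigenvalue. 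One then checks that $f_{\alpha_2}\wedge e_{\alpha_2}$ and $f_{\alpha_1+\alpha_2}\wedge e_{\alpha_1+\alpha_2}$ individually satisfy $vb = \e(b)v$ for all the generators — again using that the only non-diagonal action with a target in the $(\alpha_1+\alpha_2)$-slot has source in the $\alpha_1$-slot, so nothing maps \emph{out} of these two slots.

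The main obstacle will be bookkeeping: organising the Leibniz expansion $(f_\gamma \wedge e_\gamma)\,z^{\alpha_p}_{ij} = \sum_k (f_\gamma u_{ik}^{(\ast)}) \wedge (e_\gamma u_{kj}^{(\ast)})$ cleanly enough to see the cancellations, keeping careful track of the $q$-powers from the diagonal actions $e_\gamma u_{kk} = q^{-(\gamma,\e_k)}e_\gamma$, $f_\gamma u_{kk} = q^{-(\gamma,\e_k)} f_\gamma$, and confirming that the diagonal pieces combine to exactly $\e(b)$ on each basis vector (so that only the genuinely off-diagonal term obstructs centrality of $f_{\alpha_1}\wedge e_{\alpha_1}$). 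Once the matrix of each generator is in hand, the conclusion — that the centralising subspace is exactly $\mathrm{span}\{f_{\alpha_2}\wedge e_{\alpha_2},\, f_{\alpha_1+\alpha_2}\wedge e_{\alpha_1+\alpha_2}\}$ — is immediate. As a sanity check one can note that this is consistent with Proposition (on restriction to $\OO_q(\mathbb{CP}^2)$): the form $f_{\alpha_2}\wedge e_{\alpha_2}$ lives in the Heckenberger--Kolb subcalculus $\Omega^\bullet_q(\mathbb{CP}^2_{\alpha_2})$ where centrality of the (rescaled) Kähler form is known, and similarly for the other surviving combination.
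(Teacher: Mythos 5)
Your proposal is correct and follows essentially the same route as the paper: reduce to the three-dimensional coinvariant space of Lemma~\ref{lem:leftcov}, observe that $f_{\alpha_2}\wedge e_{\alpha_2}$ and $f_{\alpha_1+\alpha_2}\wedge e_{\alpha_1+\alpha_2}$ satisfy $vb=\e(b)v$ because no off-diagonal action maps out of their slots, and exhibit an explicit weight-zero element of $\OO_q(\mathrm{F}_3)$ with vanishing counit that acts non-trivially on $f_{\alpha_1}\wedge e_{\alpha_1}$ (the paper uses the cubic element $u_{11}u_{32}u_{23}$ rather than a quadratic generator, and note that in the paper's conventions $z^{\alpha_1}_{22}=u_{21}u_{21}^{*}$, not $u_{22}u_{22}^{*}$ --- the generator that actually detects the failure is $z^{\alpha_2}_{22}=u_{23}u_{23}^{*}$, consistent with your hedge).
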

\begin{proof}
Note first that 
$$
(f_{\alpha_2} \wedge e_{\alpha_2}) b = f_{\alpha_2}b_{(1)} \wedge e_{\alpha_2}b_{(2)} = f_{\alpha_2}b_{(1)} \wedge e_{\alpha_2}\e(b_{(2)}) = \e(b) f_{\alpha_2} \wedge e_{\alpha_2},
$$
with the analogous result for $f_{\alpha_1+\alpha_2} \wedge e_{\alpha_1+\alpha_2}$. Let us now look at the remaining $\mathcal{O}_q(\mathbb{T}^2)$-coinvariant basis element $f_{\alpha_2} \wedge e_{\alpha_2}$ . It follows from \eqref{eqn:grading.on.gens} that the element $u_{11}u_{32}u_{33}$ is contained in $\OO_q(\mathrm{F}_3)$. We see that 
\begin{align*}
(f_{\alpha_1} \wedge e_{\alpha_1})u_{11}u_{32}u_{23} = & \, \sum_{a,b,c=1}^3 (f_{\alpha_1}u_{1a}u_{3b}u_{3c}) \wedge (e_{\alpha_1}u_{a1}u_{b2}u_{c3}) \\
                                                    = & \, \sum_{a,b,c=1}^3 (f_{\alpha_1}u_{11}u_{23}u_{33}) \wedge (e_{\alpha_1}u_{11}u_{32}u_{33}) \\
                                                    = & -q^{-3}\nu^2 f_{\alpha_1} \wedge e_{\alpha_1}\\
                                                    \neq & ~~ 0.
\end{align*}
Thus $(f_{\alpha_1} \wedge e_{\alpha_1})u_{11}u_{23}u_{33}  \neq \e(b)f_{\alpha_1} \wedge e_{\alpha_1}$, for all $b$, meaning that the space of coinvariant forms, whose right $\OO_q(\mathrm{F_3})$-action is trivial, is two-dimensional as claimed. \cite{ROBKahler}
\end{proof}

\begin{lem}\label{lem:nondegenerate}
For an arbitrary left $\OO_q(\mathrm{SU}_3)$-coinvariant form
$$
\omega := c_1 f_{\alpha_1} \wedge e_{\alpha_1} + c_2 f_{\alpha_2} \wedge e_{\alpha_2} + c_3  f_{\alpha_1+\alpha_2} \wedge e_{\alpha_1+\alpha_2}
$$
it holds that $\omega^3 \neq 0$ only if $c_1 \neq 0$.
\end{lem}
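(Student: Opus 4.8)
The plan is to compute $\omega^3$ directly in the quantum exterior algebra $V^{\bullet}$, using the commutation relations from Theorem \ref{thm:THERELATIONS}, and observe which terms survive. Write $\omega = c_1 A_1 + c_2 A_2 + c_3 A_3$, where $A_i := f_{\gamma_i} \wedge e_{\gamma_i}$ with $\gamma_1 = \alpha_1$, $\gamma_2 = \alpha_2$, $\gamma_3 = \alpha_1+\alpha_2$. The first observation is that each $A_i$ is a weight-zero element, so all $A_i A_j$ and $A_iA_jA_k$ products remain in the (one-dimensional) top-weight-zero pieces; moreover, since $V^6$ is one-dimensional and spanned by $e_{\alpha_1}\wedge e_{\alpha_2}\wedge e_{\alpha_1+\alpha_2}\wedge f_{\alpha_1}\wedge f_{\alpha_2}\wedge f_{\alpha_1+\alpha_2}$, the element $\omega^3 \in V^6$ is a scalar multiple of this basis vector, and it suffices to compute that scalar.

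Next I would record the pairwise products. Using the relations, $A_i \wedge A_i = f_{\gamma_i}\wedge e_{\gamma_i}\wedge f_{\gamma_i}\wedge e_{\gamma_i}$, and since $e_{\gamma_i}\wedge e_{\gamma_i} = 0$ whenever we can slide an $e_{\gamma_i}$ past an $f_{\gamma_i}$ cleanly — the key point is that for $i=3$ (i.e. $\gamma = \alpha_1+\alpha_2$) the relation $e_{\alpha_1+\alpha_2}\wedge f_{\alpha_1+\alpha_2} = -q^{(\alpha_1+\alpha_2,\alpha_1+\alpha_2)}f_{\alpha_1+\alpha_2}\wedge e_{\alpha_1+\alpha_2}$ is of the ``clean'' type (from the second set of relations in Theorem \ref{thm:THERELATIONS}), so $A_3 \wedge A_3 = 0$; whereas for $i = 1, 2$ the third set of relations introduces an extra $\nu f_{\alpha_1+\alpha_2}\wedge e_{\alpha_1+\alpha_2}$ term, which must be tracked. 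So the central computational step is to expand $\omega^3 = \sum_{i,j,k} c_ic_jc_k \, A_i\wedge A_j\wedge A_k$, using the relations to normal-order each monomial. Terms with a repeated $e_{\gamma}$ for $\gamma$ among a ``clean'' pair vanish; the surviving contributions come from: (a) the product $A_1\wedge A_2\wedge A_3$ and its permutations (six terms, each a nonzero scalar times $c_1c_2c_3$ times the top form), and (b) products like $A_1\wedge A_1\wedge A_3$, which are nonzero only because $e_{\alpha_1}\wedge f_{\alpha_1}$ picks up a $\nu f_{\alpha_1+\alpha_2}\wedge e_{\alpha_1+\alpha_2}$ correction. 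I would carefully enumerate which of the $27$ monomials can possibly be nonzero: any monomial is zero unless, after normal ordering, each of $e_{\alpha_1}, e_{\alpha_2}, e_{\alpha_1+\alpha_2}, f_{\alpha_1}, f_{\alpha_2}, f_{\alpha_1+\alpha_2}$ appears exactly once, and the correction terms are the only mechanism by which a monomial containing two copies of some $A_i$ can have this property — and only for $i \in \{1,2\}$, producing an $A_3$-type factor.

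The decisive point is then the following: every surviving monomial contains a factor of $A_3 = f_{\alpha_1+\alpha_2}\wedge e_{\alpha_1+\alpha_2}$ (either directly, as in case (a), or generated by a correction term as in case (b)), and once that factor is present, a second $e_{\alpha_1+\alpha_2}$ would force the monomial to vanish. Tracing through cases (a) and (b) one finds that every nonzero contribution to $\omega^3$ is divisible by $c_1$: in case (a) the coefficient is $c_1c_2c_3$ times a nonzero scalar, and in case (b) it is $c_1^2 c_3$ or $c_1 c_2^2$ times a nonzero scalar (the correction from $A_1\wedge A_1$ or $A_2\wedge A_2$ supplies an $A_3$, but one still needs the genuine $A_1$-factor, hence the $c_1$). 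Gathering terms, $\omega^3 = c_1 \cdot (\text{some polynomial in } c_1,c_2,c_3) \cdot (\text{top form})$, so $c_1 = 0$ forces $\omega^3 = 0$, which is the contrapositive of the claim.

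The main obstacle I anticipate is the bookkeeping in normal-ordering the degree-four and degree-six monomials: one must be scrupulous about the sign and $q$-power each transposition contributes, and in particular about correctly applying the two ``mixed'' relations from the third set in Theorem \ref{thm:THERELATIONS}, since it is precisely the $\nu$-correction terms there that determine which cross-terms survive. A clean way to organize this is to first prove the reduced multiplication rules for the three elements $A_1, A_2, A_3$ among themselves (a small $3$-generator subalgebra computation), namely $A_3^2 = 0$, $A_i A_j = A_j A_i$ for $i \ne j$ up to an explicit nonzero scalar, and $A_i^2 = (\text{scalar})\, A_i A_3$ for $i=1,2$, and only then expand $\omega^3$ using these rules — this isolates the one genuinely delicate calculation and makes the divisibility-by-$c_1$ conclusion transparent.
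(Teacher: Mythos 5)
Your proposal is correct and is essentially the paper's own argument: the paper simply takes the contrapositive, sets $c_1=0$, and checks that the eight monomials $A_i\wedge A_j\wedge A_k$ with $i,j,k\in\{2,3\}$ all vanish under the relations of Theorem~\ref{thm:thereal}, which is exactly your observation that the correction terms only ever produce $f_{\alpha_1+\alpha_2}\wedge e_{\alpha_1+\alpha_2}$, so the factor $f_{\alpha_1}\wedge e_{\alpha_1}$ needed to reach the top form can only come from an explicit $A_1$. Two small slips in your side remarks --- $A_1\wedge A_1\wedge A_3$ is in fact zero (it normal-orders to a word with $f_{\alpha_1+\alpha_2}$ and $e_{\alpha_1+\alpha_2}$ repeated), and the potentially surviving case-(b) coefficients are $c_1^2c_2$ and $c_1c_2^2$ rather than $c_1^2c_3$ --- do not affect the conclusion, since the monomials you over-count are still divisible by $c_1$.
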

\begin{proof}
Let us consider the case of $c_1 = 0$ and consider the third power of the form. This will be a linear combination of elements of the form
\begin{align*}
    c_ic_jc_k f_{\alpha_i} \wedge e_{\alpha_i} \wedge f_{\alpha_j} \wedge e_{\alpha_j} \wedge f_{\alpha_k} \wedge e_{\alpha_k},
\end{align*}
for $i,j,k = 2,3$, and for convenience we have denoted $\alpha_3 = \alpha_1 + \alpha_2$. It follows directly from the commutation relations given in \textsection \ref{thm:thereal} that all such products are zero. For example, we see that the product 
$$
f_{\alpha_2} \wedge e_{\alpha_2} \wedge f_{\alpha_2} \wedge e_{\alpha_2} \wedge f_{\alpha_3} \wedge e_{\alpha_3}
$$
is equal to 
\begin{align*}
 & \, - q^{-2} f_{\alpha_2} \wedge  f_{\alpha_2} \wedge e_{\alpha_2} \wedge e_{\alpha_2} \wedge f_{\alpha_3} \wedge e_{\alpha_3} + \nu f_{\alpha_2} \wedge  f_{\alpha_3} \wedge e_{\alpha_3} \wedge e_{\alpha_3} \wedge f_{\alpha_3} \wedge e_{\alpha_3}
\end{align*}
which is in turn equal to 
\begin{align*}
q^{(\alpha_1+\alpha_2,\alpha_1+\alpha_2)} \nu f_{\alpha_2} \wedge  f_{\alpha_3}  \wedge  f_{\alpha_3} \wedge e_{\alpha_3} \wedge e_{\alpha_3} \wedge e_{\alpha_3} = 0.
\end{align*}
Thus the form $\omega^3$ is equal to zero as claimed.
\end{proof}

Combining the statements of Lemma \ref{lem:F3centralforms} and Lemma \ref{lem:nondegenerate} we arrive at the following theorem, which means that the dc $\Omega^{\bullet}_q(\mathrm{F}_3)$ does not admit a covariant K\"ahler structure.

\begin{thm}
There does not exist a left $\OO_q(\mathrm{SU}_3)$-coinvariant non-degenerate form $\sigma \in \Omega^2_q(\mathrm{F}_3)$ that commutes with the elements of $\OO_q(\mathrm{F}_3)$.
\end{thm}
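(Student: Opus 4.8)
The plan is to derive the statement directly as a corollary of Lemma \ref{lem:coinvariants}, Lemma \ref{lem:leftcov}, Lemma \ref{lem:F3centralforms}, Lemma \ref{lem:nondegenerate}, and the general lemma characterising when a coinvariant form commutes with $\OO_q(\mathrm{F}_3)$. Assume, for contradiction, that a form $\sigma \in \Omega^2_q(\mathrm{F}_3)$ exists which is simultaneously left $\OO_q(\mathrm{SU}_3)$-coinvariant, non-degenerate (so $\sigma^3 \neq 0$ in $\Omega^6_q(\mathrm{F}_3)$), and central, in the sense that $\sigma b = b\sigma$ for all $b \in \OO_q(\mathrm{F}_3)$.

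First I would pass to the cotangent picture. Applying Lemma \ref{lem:coinvariants} with $A = \OO_q(\mathrm{SU}_3)$, $B = \OO_q(\mathrm{F}_3)$ and $\pi_B(A) = \OO(\mathbb{T}^2)$, the space of left $\OO_q(\mathrm{SU}_3)$-coinvariant $2$-forms is identified with $1 \otimes \big({}^{\OO(\mathbb{T}^2)}V^2\big)$, that is, with the weight-zero subspace of $V^2$. By Lemma \ref{lem:leftcov} this subspace is three-dimensional, so
$$
[\sigma] = c_1\, f_{\alpha_1} \wedge e_{\alpha_1} + c_2\, f_{\alpha_2} \wedge e_{\alpha_2} + c_3\, f_{\alpha_1 + \alpha_2} \wedge e_{\alpha_1 + \alpha_2}
$$
for some scalars $c_1, c_2, c_3 \in \mathbb{C}$. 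Next, centrality of $\sigma$ translates, via the equivalence $\sigma b = b\sigma \iff [\sigma]b = \e(b)[\sigma]$ valid for coinvariant $\sigma$, into the requirement that $[\sigma]$ have trivial right $\OO_q(\mathrm{F}_3)$-action; Lemma \ref{lem:F3centralforms} then forces $c_1 = 0$. But with $c_1 = 0$, Lemma \ref{lem:nondegenerate} gives $\sigma^3 = 0$, contradicting non-degeneracy. Hence no such $\sigma$ can exist, which is the assertion of the theorem.

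The argument itself has no genuine obstacle, as all the real content sits in the preceding lemmas. The only point deserving a word of care is the compatibility of the identifications involved — $\sigma \leftrightarrow [\sigma]$, the right $\OO_q(\mathrm{F}_3)$-module structure, and the passage $\sigma^3 \leftrightarrow [\sigma]^3$ — with the algebra structure on $\Omega^\bullet_q(\mathrm{F}_3)$. This is exactly what Takeuchi's equivalence provides: the unit $\unit$ is an algebra morphism, and on left $\OO_q(\mathrm{SU}_3)$-coinvariant elements it restricts to the inclusion of the weight-zero part of $V^\bullet$ into $\OO_q(\mathrm{SU}_3) \square_{\OO(\mathbb{T}^2)} V^\bullet$, so that computing $\sigma^3$ inside $\Omega^\bullet_q(\mathrm{F}_3)$ amounts to computing $[\sigma]^3$ inside $V^\bullet$ via the relations of Theorem \ref{thm:thereal}, precisely as is implicitly done in the proof of Lemma \ref{lem:nondegenerate}.
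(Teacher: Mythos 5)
Your proof is correct and follows the paper's own argument exactly: the paper obtains the theorem precisely by combining Lemma \ref{lem:F3centralforms} (centrality of a coinvariant $2$-form forces $c_1 = 0$) with Lemma \ref{lem:nondegenerate} ($c_1 = 0$ forces $\sigma^3 = 0$). Your additional remarks on the identifications $\sigma \leftrightarrow [\sigma]$ and $\sigma^3 \leftrightarrow [\sigma]^3$ merely make explicit what the paper leaves implicit in those two lemmas.
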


\begin{remark} 
Despite the fact that any nondegenerate coinvariant $2$-form does not commutate with the elements of $\OO_q(\mathrm{F}_3)$, we can still define left and right Lefschetz maps. As is readily checked, each map is (either a left or right) $\OO_q(\mathrm{F}_3)$-module isomorphism. Each has an associated Lefschetz decomposition with a corresponding Hodge map, metric, and inner product.
\end{remark}

\begin{remark}
The nonexistence of a coinvariant non-degenerate form also implies that $\Omega^2_q(\mathrm{F}_3)$ does not admit a metric in the sense of Beggs and Majid \cite{BeggsMajid:Leabh}. This implies that $\Omega^1_q(\mathrm{F}_3)$ is not self-dual as an object in the category of relative Hopf modules ${}^A_B\mathrm{Mod}_B$, as explained for example in \cite{LC.Praha.Kolkata}.
\end{remark}

\begin{remark}
The definition of a K\"ahler structure also requires that the K\"ahler form $\kappa$ is \emph{real}, that is to say $\kappa^* = \kappa$, and \emph{closed}, that is to say $\exd \kappa = 0$. A family of real, closed, left $\OO_q(\mathrm{SU}_3)$-coinvariant $2$-forms can be constructed from the K\"ahler structures of the two copies of $\OO_q(\mathbb{CP}^2)$ in $\OO_q(\mathrm{F}_3)$. 

As shown in \textsection \ref{subsection:HKrestriction}, the Heckenberger--Kolb double complex of each copy of $\OO_q(\mathbb{CP}^2)$ is realised as the restriction of the $*$-dc $\Omega^{\bullet}_q(\mathrm{F}_3)$. Thus the K\"ahler forms $\kappa_1$ and $\kappa_2$ of these dc, as introduced in \cite{ROBKahler}, will be real closed left $\OO_q(\mathrm{SU}_3)$-coinvariant elements of $\Omega^{2}_q(\mathrm{F}_3)$. Thus we see that the $2$-form 
\begin{align*}
\kappa_1 + \lambda \kappa_2, & & \textrm{ for } \lambda \in \mathbb{C}^{\times}
\end{align*}
is a real closed left $\OO_q(\mathrm{SU}_3)$-coinvariant $2$-form. Classically this form is the fundamental form of a K\"ahler metric for $\mathrm{F}_3$.
\end{remark}


\appendix 

\section{Maximal Prolongations and Framing Calculi} \label{app:MPFC}

In this appendix, we present an explicit formula for the degree two generators of the maximal prolongation of a left $A$-covariant fodc over a quantum homogeneous space $B \subseteq A$, in the presence of a framing calculus $\Omega^1(A) \cong A \otimes \Lambda^1$. This formula is well known for the special case of a Hopf algebra $A = B$ \cite[\textsection 14.3]{KSLeabh}. Moreover, an alternative version appeared in \cite[\textsection 5]{MMF2} under the assumption that $\Omega^1(B)$ satisfies the additional property $\Omega^1(B)B^+ = B^+\Omega^1(B)$. Here we do not make this addition assumption.

For any algebra $B$, the \emph{universal fodc} is the fodc $(\Omega^1_u(B),\exd_u)$, where $\Omega^1_u(B)$ is the kernel of the multiplication map $m: B \otimes B \to B$, and $\exd_u b = 1 \otimes b - b \otimes 1$. Every fodc is isomorphic to a quotient $\Omega^1_u(B)/N$, for some sub-bimodule $N \subseteq \Omega^1_u(B)$. Moreover, this gives a bijective correspondence between fodc and sub-bimodules. 
Now the maximal prolongation of $\Omega^1(B)$ can be explicitly described as the quotient of the tensor algebra of $\Omega^1(B)$ by the two-sided ideal generated by the set of elements
\begin{align*}
  N^{(2)} :=  \Big\{\delta(\sum_i b_i\exd_u c_i) := \sum_i \exd b_i \otimes \exd c_i \,|\, \textrm{ for } \sum_i b_i\exd c_i \in N\}.
  \end{align*}

For a Hopf algebra $A$, the fundamental theorem of Hopf modules is a monoidal equivalence between ${}^A_A\mathrm{Mod}_A$ and $\mathrm{Mod}_A$, with respect to the obvious monoidal structure of each category. Explicitly, the components of the equivalence are given by 
\begin{align*}
\mu_{\MM,\NN}: F(\MM \otimes_A \NN) \to F(\MM) \otimes F(\NN), & & [m \otimes n] \mapsto [mn_{(-1)}] \otimes [n_{(0)}].
\end{align*}
For a left $A$-covariant fodc $\Omega^1(A)$ over $A$, this gives us an isomorphism
\begin{align*}
F(\Omega^1(A)^{\otimes_A k}) \to F(\Omega^1(A))^{\otimes k}. 
\end{align*}
Restricting this isomorphism to the image of the tensor algebra of $\Omega^1(B)$ under $\Phi$ we get an isomorphism, in the category ${}^A_B\mathrm{Mod}_B$, with the tensor algebra of $V^{1}$. 

Denoting by $V^{\bullet}$ the quotient of the tensor algebra of $V^1$ by the ideal generated by the subspace
$$
I^{(2)} := \mu(\Phi(N^{(2)})).
$$
We now see that the maximal prolongation $\Omega^{\bullet}(B)$ is isomorphic $A \square_{\pi_B} V^{\bullet}$, as described in \textsection \ref{subsection:remarksQHTS}. The following proposition now gives an explicit presentation of $I^{(2)}$ in terms of the elements of the ideal $I$.

\begin{prop}
It holds that 
\begin{align} \label{eqn:I(2)}
I^{(2)} = \Big\{ [\exd y_{(1)}] \otimes [\exd y_{(2)}] \,|\, y \in I \Big\}.
\end{align}
\end{prop}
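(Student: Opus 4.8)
The plan is to push everything through the Takeuchi-theoretic identifications of this appendix, reducing the statement to an elementary computation with the reduced coproduct, exactly as in the Hopf-algebra case \cite[\textsection 14.3]{KSLeabh}.

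First I would identify the sub-bimodule $N$ (with $\Omega^1(B)=\Omega^1_u(B)/N$) in terms of $I$. Since the universal fodc over $B$ corresponds to the zero right ideal, faithful flatness (exactness of $A\,\square_{\pi_B}-$) gives an isomorphism $\Omega^1_u(B)\cong A\,\square_{\pi_B}B^+$ in ${}^A_B\mathrm{Mod}_B$ under which $\exd_u b$ corresponds to $b_{(1)}\otimes b_{(2)}^+$; it carries $N$ onto $A\,\square_{\pi_B}I$, so in particular $\Phi(N)=I$ inside $\Phi(\Omega^1_u(B))=B^+$. Writing an element of $\Omega^1_u(B)=\ker(m)$ as $\omega=\sum_i b_i\otimes c_i=\sum_i b_i\exd_u c_i$ (so $\sum_i b_ic_i=0$), this says concretely that $\omega\in N$ if and only if $\sum_i b_ic_{i(1)}\otimes c_{i(2)}\in A\otimes I$.

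Next I would evaluate $\mu\circ\Phi$ on $N^{(2)}$ using the framing calculus. As explained above, $\Omega^1(B)\subseteq\Omega^1(A)\cong A\otimes\Lambda^1$ as a $B$-sub-bimodule, and $\mu\circ\Phi$ on $\Omega^1(B)\otimes_B\Omega^1(B)$ is the restriction to $V^1\otimes V^1\subseteq\Lambda^1\otimes\Lambda^1$ of the fundamental-theorem monoidal structure for $A$; using the coaction $\exd c\mapsto c_{(1)}\otimes\exd c_{(2)}$ on $\Omega^1(A)$ and the Leibniz rule $[b^+]\cdot a=[(ba)^+]-\e(b)[a^+]$ in $\Lambda^1$, this gives for $b,c\in B$ (viewing $\exd b\otimes_B\exd c$ inside $\Omega^1(A)\otimes_A\Omega^1(A)$ via the framing)
\[
\mu\big(\Phi(\exd b\otimes_B\exd c)\big)=\big([(bc_{(1)})^+]-\e(b)[c_{(1)}^+]\big)\otimes[c_{(2)}^+].
\]
Feeding in $\omega=\sum_i b_i\otimes c_i\in N$ and setting $y_\omega:=\sum_i\e(b_i)c_i$: from $\sum_i b_ic_i=0$ one gets $y_\omega\in B^+$, and applying $\e\otimes\id$ to $\sum_i b_ic_{i(1)}\otimes c_{i(2)}\in A\otimes I$ shows $y_\omega\in I$. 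The term $\sum_i[(b_ic_{i(1)})^+]\otimes[c_{i(2)}^+]$ is the image of $\sum_i b_ic_{i(1)}\otimes c_{i(2)}\in A\otimes I$ under $a\otimes b\mapsto[a^+]\otimes[b^+]$, and hence vanishes because $I=B^+\cap I_A$ is killed by the map $A\to\Lambda^1$, $a\mapsto[a^+]$; the remaining terms give $[y_{\omega(1)}^+]\otimes[y_{\omega(2)}^+]$. So $\mu(\Phi(\delta(\omega)))=-[y_{\omega(1)}^+]\otimes[y_{\omega(2)}^+]$, and since $[y^+_{(i)}]=[\exd y_{(i)}]$ it remains only to see that $\omega\mapsto y_\omega$ surjects onto $I$: given $y\in I$ we have $[\exd y]=[y^+]=0$ in $V^1=B^+/I$, so $\exd y\in B^+\Omega^1(B)$, say $\exd y=\sum_m p_m\exd s_m$ with $p_m\in B^+$; then $\omega:=\exd_u y-\sum_m p_m\exd_u s_m$ lies in $N$ and has $y_\omega=y$. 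The overall sign is harmless, both sides of \eqref{eqn:I(2)} being $\mathbb{C}$-linear images of $I$ and so closed under negation.

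The step I expect to be the main obstacle is the bookkeeping rather than any single hard idea: one must keep the three identifications straight --- $\Omega^1_u(B)\cong A\,\square_{\pi_B}B^+$, the framing embedding $V^1\hookrightarrow\Lambda^1$, and the monoidal structure $\mu$ --- and, crucially, remember that $\delta$ is only $\mathbb{C}$-linear (it fails left $B$-linearity by a Leibniz term), so $\Phi$ cannot be applied to it termwise and the computation must be run on representatives as above. Granting these identifications, the calculation is the one for $A=B$ in \cite[\textsection 14.3]{KSLeabh}.
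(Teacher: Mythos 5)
Your argument is correct and follows essentially the same route as the paper's: identify $N$ with $A\,\square_{\pi_B}I$ via Takeuchi, push $\delta(N)$ through $\Phi$ and the monoidal structure $\mu$, use the Leibniz rule together with the vanishing of $I$ in $V^1$ (equivalently in $\Lambda^1$) to reduce everything to $[\exd y_{(1)}]\otimes[\exd y_{(2)}]$, and then verify surjectivity onto the right-hand side. The only differences are bookkeeping: the paper parametrises $N$ via $\unit^{-1}$ (so the antipode enters its Leibniz computation) and proves the reverse inclusion using $\counit^{-1}$ and the splitting $A\cong A^{+}\oplus\mathbb{C}1$, whereas you work directly with kernel elements $\sum_i b_i\otimes c_i$ subject to the cotensor condition and lift a given $y\in I$ by writing $\exd y\in B^{+}\Omega^1(B)$ explicitly.
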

\begin{proof}
Note first that, for $I$ the ideal of $B^+$ corresponding to $\Omega^1(B)$, and $\sum_i a_i \otimes [y_i]$ an element of $A \square_{\pi_B} I$, it holds that 
\begin{align*}
\unit^{-1}(\sum_i a_i \otimes [y_i]) = & \, \sum_i a_i S((y_i)_{(1)})\exd (y_i)_{(2)}.
\end{align*}
Thus it holds that 
\begin{align*}
 N^{(1)} = \Big\{\sum_i a_i S((y_i)_{(1)})\exd (y_i)_{(2)} \,|\, \sum_i a_i \otimes y_i \in A \square_{\pi_B} I\Big\}.  
\end{align*}
From this it follows that
\begin{align*}
 N^{(2)} = \delta N^{(1)} = \Big\{\sum_i \exd(a_i S((y_i)_{(2)})) \otimes \exd (y_i)_{(2)} \,|\, \sum_i a_i \otimes y_i \in A \square_{\pi_B} I\Big\}.   
\end{align*}
Taking the image of this object in ${}^A_B \mathrm{Mod}_B$ under the functor $\Phi$ we get the following object in ${}^{\pi_B} \mathrm{Mod}$
\begin{align*}
 \Big\{\sum_i [\exd(a_i S((y_i)_{(1)})) \otimes \exd (y_i)_{(2)}] \,|\, \sum_i a_i \otimes y_i \in A \square_{\pi} I\Big\}.  
\end{align*}
Operating on this by the restriction of $\mu$, we get the subspace $I^{(2)}$. So let us next consider the action of $\mu$ on an element of $F(\Omega^1(A) \otimes_A \Omega^1(A))$ of the form $[\exd aS(c_{(1)}) \otimes \exd c_{(2)}]$:
\begin{align*}
\mu\Big([\exd(aS(y_{(1)})) \otimes \exd y_{(2)}]\Big) = & \, [\exd(a S(y_{(1)}))y_{(2)}] \otimes [\exd y_{(3)}].
\end{align*}
An application of the Leibniz rule, and the antipode axiom, then yields the expression
\begin{align*}
[\exd a] \otimes [\exd y]  + \e(a)[\exd(S(y_{(1)}))y_{(2)}] \otimes [\exd y_{(3)}]
\end{align*}
Now since $\exd y \in I$, its coset is trivial, and so, the first summand goes to zero, giving
\begin{align*}
\e(a)[\exd(S(y_{(1)}))y_{(2)}] \otimes [\exd y_{(3)}]
\end{align*}
Noting next that, for any $a \in A$, we have that $\exd(S(a_{(1)}))a_{(2)} = - S(a_{(1)})\exd a_{(2)}$, our expression can be reduced to 
\begin{align*}
- \e(a)[\exd y_{(1)}] \otimes [\exd y_{(2)}]
\end{align*}
Thus we see that $I^{(2)}$ is contained in the right hand side of \eqref{eqn:I(2)}. To show the opposite inclusion, consider an element $[\exd z] \in I$ and the inverse of the counit map
$$
\counit^{-1}: I \to \Phi(A \square_{\pi_B} I). 
$$
Using the standard decomposition $A \cong A^+ \otimes \mathbb{C}1$, we see that $\counit^{-1}(\exd z)$ can be written as a sum $1 \otimes \exd z + \sum_i a_i \otimes \exd z_i$, where $a_i \in A^+$. The corresponding element in $I^{(2)}$ will, of course, be $[\exd z_{(1)}] \otimes [\exd z_{(2)}]$, giving us the opposite inclusion.
\end{proof}

Finally, we recall that an isomorphism $\sigma$, in the category ${}^A\mathrm{Mod}_B$ between $\Phi(\Omega^1_u(B))$ and $B^+$ is given by $\sigma[\exd b] := b^+ := b - \e(B)1$. This means that $\Phi(\Omega^1(B))$ is isomorphic to $V^1_B := B^+/I$, where by abuse of notation we have denoted $sigma(I)$ by $I$. This in turn means that the tensor algebra of $\Phi(\Omega^1(B))$ is isomorphic to the tensor algebra of $V^1_B$. Finally, we see that this gives us the quantum exterior algebra presentation of the maximal prolongation given in \textsection \ref{subsection:remarksQHTS}, where we have again, by abuse of notation, denoted $\sigma(I^{(2)})$ by $I^{(2)}$.

\section{Some Details on the Lie Algebra $\mathfrak{sl}_3$} \label{app:sl3}

In this subsection, so as to set the notation, we recall some elementary definitions and results about the $A_2$ root system associated to the special linear Lie algebra $\mathfrak{sl}_{3}$. Let $\{\e_i\}_{i=1}^{n+1}$ be the standard basis of $\mathbb{R}^{3}$, endowed with its usual Euclidean structure.  The root system $A_{2}$ is the pair $(V,\Delta)$, where $V$ is the subspace of $\mathbb{R}^{3}$ spanned by the roots
$$
\Delta := \Big\{ \pm \alpha_1 := \pm(\e_1-\e_2), \, \pm \alpha_2 := \pm(\e_2-\e_3), \, \pm (\alpha_1 + \alpha_2) = \pm(\e_1 - \e_3) \Big\}.
$$
We take the standard subset of positive roots, and its associated set of simple roots, 
\begin{align*}
\Delta^+ := \Big\{ \alpha_1, \, \alpha_2, \, \alpha_1 + \alpha_2\Big\}, & & \Pi := \Big\{\alpha_1, \, \alpha_2 \Big\}.
\end{align*}
The Weyl group of the root system is the symmetric group $S_{3}$ of order $6$, and the Cartan matrix is
$$
(a_{ij})_{ij} = 
\begin{pmatrix}
2 & -1  \\
-1 & 2 
\end{pmatrix}
$$

We finish with a table presenting all possible sums $\alpha + \beta$, where $\alpha,\beta \in \Delta^+$. The sums highlighted in blue are those that again roots of $\frak{sl}_3$. We appeal to this table a number of times in the paper. For example, we refer to it when classifying the left coinvariant forms in Lemma \ref{lem:leftcov}.

\begin{table}
\caption{Sums of roots of $\frak{sl}_3$} \label{table:sumofroots}
\begin{small}
\begin{center}
\begin{tabular}{|c|c|c|c|c|c|c|}
    \hline
   & $\alpha_1$ & $\alpha_2$ & $\alpha_1+\alpha_2$ & $-\alpha_1$ & $-\alpha_2$ & $-(\alpha_1+\alpha_2)$\\
    \hline
    $\alpha_1$ &$2\alpha_1$&  {\color{blue}$\alpha_1+\alpha_2$} & $2\alpha_1+\alpha_2$ & $0$ & $\alpha_1-\alpha_2$ &   {\color{blue}$-\alpha_2$} \\
    \hline
    $\alpha_2$ &  {\color{blue}$\alpha_1+\alpha_2$}& $2\alpha_2$ & $\alpha_1+2\alpha_2$ & $\alpha_2-\alpha_1$ & $0$ &   {\color{blue}$-\alpha_1$} \\
    \hline
   $\alpha_1+\alpha_2$ & $2\alpha_1+\alpha_2$& $\alpha_1+2\alpha_2$ & $2(\alpha_1+\alpha_2)$ &  {\color{blue}$\alpha_2$} &  {\color{blue}$\alpha_1$ }&  $0$\\
    \hline
    $-\alpha_1$& $0$& $\alpha_2-\alpha_1$ &  {\color{blue}$\alpha_2$} & $-2\alpha_1$ &  {\color{blue}$-(\alpha_1+\alpha_2)$} &  $-(2\alpha_1+\alpha_2)$  \\
    \hline
   $-\alpha_2$ & $\alpha_1-\alpha_2$& $0$ &  {\color{blue}$\alpha_1$} &  {\color{blue}$-(\alpha_1+\alpha_2)$} & $-2\alpha_2$ &  $-(\alpha_1+2\alpha_2)$ \\
    \hline
    $-(\alpha_1+\alpha_2)$ &  {\color{blue}$-\alpha_2$}&  {\color{blue}$-\alpha_1$} & $0$ & $-(2\alpha_1+\alpha_2)$ & $-(\alpha_1+2\alpha_2)$ &  $-2(\alpha_1+\alpha_2)$\\
    \hline
\end{tabular}
\end{center}
\end{small}
\end{table}


\begin{thebibliography}{10}

\bibitem{AKK23}
{\sc K.~Aguilar, J.~Kaad, and D.~Kyed}, {\em The {P}odle\'s{} spheres converge
  to the sphere}, Comm. Math. Phys., 392 (2022), pp.~1029--1061.

\bibitem{Alex.Flag.Yugo}
{\sc D.~V. Alekseevsky}, {\em Flag manifolds}, vol.~6(14), 1997, pp.~3--35.
\newblock 11th Yugoslav Geometrical Seminar (Div\v cibare, 1996).

\bibitem{BastonEastwood}
{\sc R.~J. Baston and M.~G. Eastwood}, {\em The {P}enrose transform. {I}ts
  interaction with representation theory}, Oxford Mathematical Monographs, The
  Clarendon Press, Oxford University Press, New York, 1989.
\newblock Oxford Science Publications.

\bibitem{BeggsMajid:Leabh}
{\sc E.~Beggs and S.~Majid}, {\em Quantum Riemannian Geometry}, vol.~355 of
  Grundlehren der mathematischen Wissenschaften, Springer International
  Publishing, 1~ed., 2019.

\bibitem{BS}
{\sc E.~Beggs and S.~Paul~Smith}, {\em Non-commutative complex differential
  geometry}, J. Geom. Phys., 72 (2013), pp.~7--33.

\bibitem{BergmanDiam}
{\sc G.~M. Bergman}, {\em The diamond lemma for ring theory}, Adv. in Math., 29
  (1978), pp.~178--218.

\bibitem{LC.Praha.Kolkata}
{\sc J.~Bhowmick, G.~Bappa, A.~O. Krutov, and R.~\'O~Buachalla}, {\em A
  {L}evi-{C}ivita connection on the irreducible quantum flag manifolds}.
\newblock {arXiv preprint math.QA/2411.03102 }, 2024.

\bibitem{TBGS}
{\sc T.~Brzezi\'{n}ski, G.~Janelidze, and T.~Maszczyk}, {\em Galois
  structures}, in Lecture notes on noncommutative geometry and quantum groups,
  P.~M. Hajac, ed., 2008.
\newblock Available at
  \url{https://www.mimuw.edu.pl/~pwit/toknotes/toknotes.pdf}.

\bibitem{TBSM1}
{\sc T.~Brzezi{\'n}ski and S.~Majid}, {\em Quantum group gauge theory on
  quantum spaces}, Comm. Math. Phys, 157 (1993),
  pp.~591--638.

\bibitem{BrzezinskiSzymanski2021}
{\sc T.~Brzezi\'{n}ski and W.~Szyma\'{n}ski}, {\em An algebraic framework for
  noncommutative bundles with homogeneous fibres}, Alg. Numb, Theory, 15
  (2021), pp.~217--240.

\bibitem{CDOBBW}
{\sc A.~Carotenuto, F.~D\'iaz~Garc\'ia, and R.~{\'O}~Buachalla}, {\em {A
  Borel--Weil Theorem for the Irreducible Quantum Flag Manifolds}},
  IMRN, 15 (2023), pp.~12977–13006


\bibitem{BwGrass}
{\sc A.~Carotenuto, C.~Mrozinski, and R.~\'O~Buachalla}, {\em A {B}orel--{W}eil
  theorem for the quantum {G}rassmannians}, Doc. Math., 28 (2023), pp.~261-314


\bibitem{GAPP}
{\sc A.~Carotenuto and R.~\'{O}~Buachalla}, {\em Principal pairs of quantum
  homogeneous spaces}.
\newblock {arXiv preprint math.QA/2111.11284}, 2021.



\bibitem{SISSACPn}
{\sc F.~D'Andrea and L.~D\c{a}browski}, {\em Dirac operators on quantum
  projective spaces}, Comm. Math. Phys., 295 (2010), pp.~731--790.

\bibitem{SISSACP2}
{\sc F.~D'Andrea, L.~D\c{a}browski, and G.~Landi}, {\em The noncommutative
  geometry of the quantum projective plane}, Rev. Math. Phys., 20 (2008),
  pp.~979--1006.

\bibitem{DOS1}
{\sc B.~Das, R.~\'O~Buachalla, and P.~Somberg}, {\em {D}olbeault--{D}irac
  spectral triples on quantum projective space}, Doc. Math.,  25 (2020), pp 1079--1157

\bibitem{DSPodles}
{\sc L.~D\c{a}browski and A.~Sitarz}, {\em Dirac operator on the standard
  {P}odle\'{s} quantum sphere}, in Noncommutative geometry and quantum groups
  ({W}arsaw, 2001), vol.~61 of Banach Center Publ., Polish Acad. Sci. Inst.
  Math., Warsaw, 2003, pp.~49--58.

\bibitem{FredyQuad}
{\sc F.~D\'{\i}az~Garc\'{\i}a}, {\em Spectrum of the
  {$\overline\partial$}-{L}aplace operator on zero forms for the quantum
  quadric {$\mathcal{O}_q(Q_N)$}}, J. Geom. Phys., 201 (2024).

\bibitem{HolVBs}
{\sc F.~D\'{i}az~Garc\'{i}a, A.~Krutov, R.~\'O~Buachalla, P.~Somberg, and K.~R.
  Strung}, {\em Holomorphic relative {H}opf modules over the irreducible
  quantum flag manifolds}, Lett. Math. Phys., 111 (2021).

\bibitem{HVBQFM}
{\sc F.~D\'{\i}az~Garc\'{\i}a, A.~Krutov, R.~\'{O}~Buachalla, P.~Somberg, and
  K.~R. Strung}, {\em Holomorphic relative {H}opf modules over the irreducible
  quantum flag manifolds}, Lett. Math. Phys., 111 (2021), pp.~Paper No. 10, 24.

\bibitem{DijkStok}
{\sc M.~Dijkhuizen and J.~Stokman}, {\em Quantized flag manifolds and
  irreducible {$*$}-representations}, Comm. Math. Phys. (2), 203 (1999),
  pp.~297--324.

\bibitem{HKTangent}
{\sc I.~Heckenberger and S.~Kolb}, {\em Differential calculus on quantum
  homogeneous spaces}, Lett. Math. Phys., 63 (2003), pp.~255--264.

\bibitem{HK}
{\sc I.~Heckenberger and S.~Kolb}, {\em The locally finite part of the dual
  coalgebra of quantized irreducible flag manifolds}, Proc. London Math. Soc.
  (3), 89 (2004), pp.~457--484.

\bibitem{HKdR}
{\sc I.~Heckenberger and S.~Kolb}, {\em De {R}ham complex
  for quantized irreducible flag manifolds}, J. Algebra, 305 (2006),
  pp.~704--741.

\bibitem{Nagy.Nearly.Kahler}
{\sc N.~Heidari and A.~Heydari}, {\em Nearly {K}\"ahler submanifolds of a space
  form}, Mediterr. J. Math., 13 (2016), pp.~2525--2537.

\bibitem{KLvSPodles}
{\sc M.~Khalkhali, G.~Landi, and W.~D. van Suijlekom}, {\em Holomorphic
  structures on the quantum projective line}, Int. Math. Res. Not. IMRN,
  (2011), pp.~851--884.

\bibitem{KKCP2}
{\sc M.~Khalkhali and A.~Moatadelro}, {\em The homogeneous coordinate ring of
  the quantum projective plane}, J. Geom. Phys., 61 (2011), pp.~276--289.


\bibitem{KKCPN}
{\sc M.~Khalkhali and A.~Moatadelro},  {\em Noncommutative
  complex geometry of the quantum projective space}, J. Geom. Phys., 61 (2011),
  pp.~2436--2452.

\bibitem{KSLeabh}
{\sc A.~Klimyk and K.~Schm\"udgen}, {\em Quantum Groups and Their
  Representations}, Texts and Monographs in Physics, Springer-Verlag, 1997.

\bibitem{Maj}
{\sc S.~Majid}, {\em Noncommutative {R}iemannian and spin geometry of the
  standard {$q$}-sphere}, Comm. Math. Phys., 256 (2005), pp.~255--285.

\bibitem{MarcoConj}
{\sc M.~Matassa}, {\em K\"{a}hler structures on quantum irreducible flag
  manifolds}, J. Geom. Phys., 145 (2019).

\bibitem{MatassaParth}
{\sc M.~Matassa}, {\em The {P}arthasarathy
  formula and a spectral triple for the quantum {L}agrangian {G}rassmannian of
  rank two}, Lett. Math. Phys., 109 (2019), pp.~1703--1734.

\bibitem{Matassa.Equiv.Nil}
{\sc M.~Matassa}, {\em Equivariant quantizations of the positive nilradical and
  covariant differential calculi}.
\newblock {arXiv preprint math.QA/2404.18544}, 2024.

\bibitem{Meyer}
{\sc U.~Meyer}, {\em Projective quantum spaces}, Lett. Math. Phys., 35 (1995),
  pp.~91--97.

\bibitem{MK24}
{\sc M.~H. Mikkelsen and J.~Kaad}, {\em Spectral metrics on quantum projective
  spaces}, J. Funct. Anal., 287 (2024), pp.~Paper No. 110466, 38.

\bibitem{Monty}
{\sc S.~Montgomery}, {\em Hopf algebras and their actions on rings}, vol.~82 of
  CBMS Regional Conference Series in Mathematics, Published for the Conference
  Board of the Mathematical Sciences, Washington, DC; by the American
  Mathematical Society, Providence, RI, 1993.

\bibitem{MMF1}
{\sc R.~\'{O}~Buachalla}, {\em Quantum bundle description of quantum projective
  spaces}, Comm. Math. Phys., 316 (2012), pp.~345--373.

\bibitem{MMF2}
{\sc R.~\'{O}~Buachalla}, {\em Noncommutative
  complex structures on quantum homogeneous spaces}, J. Geom. Phys., 99 (2016),
  pp.~154--173.

\bibitem{ROBKahler}
{\sc R.~\'{O}~Buachalla}, {\em Noncommutative
  {K}\"{a}hler structures on quantum homogeneous spaces}, Adv. Math., 322
  (2017), pp.~892--939.

\bibitem{ROBPSLusz}
{\sc R.~\'{O}~Buachalla and P.~Somberg}, {\em {L}usztig root vectors and a
  {D}olbeault complex for the {$A$}-series quantum flag manifolds}.
\newblock {arXiv preprint math.QA/2312.13493}.

\bibitem{Leonid.Quadratic}
{\sc A.~Polishchuk and L.~Positselski}, {\em Quadratic algebras}, vol.~37 of
  University Lecture Series, American Mathematical Society, Providence, RI,
  2005.

\bibitem{VoigtYuncken}
{\sc C.~Voigt and R.~Yuncken}, {\em Equivariant {F}redholm modules for the full
  quantum flag manifold of {${\rm SU}_q(3)$}}, Doc. Math., 20 (2015),
  pp.~433--490.

\bibitem{DOW}
{\sc E.~Wagner, F.~D\'iaz~Garc\'ia, and R.~\'O Buachalla}, {\em A
  {D}olbeault-{D}irac spectral triple for the {$B_2$}-irreducible quantum flag
  manifold}, Comm. Math. Phys., 395 (2022), pp.~365--403.

\end{thebibliography}
\end{document}